\theoremstyle{change}
\newtheorem{theorem}{Theorem}[section]
\newtheorem{corollary}[theorem]{Corollary}
\newtheorem{lemma}[theorem]{Lemma}
\newtheorem{proposition}[theorem]{Proposition}
\newtheorem{definition}[theorem]{Definition}
\newtheorem{remark}[theorem]{Remark}
\theoremstyle{nonumberplain}
\newtheorem{proof}{Proof}
\theoremstyle{empty}
\newtheorem{proofof}{}
\numberwithin{equation}{section}
\def\phi{{\varphi}}
\newcommand\C{{\mathbb C}}
\newcommand\E{{\mathbb E}}
\newcommand\N{{\mathbb N}}
\newcommand\R{{\mathbb R}}
\newcommand\Z{{\mathbb Z}}
\newcommand\V{{\mathbb V}}
\newcommand\CA{{\mathcal A}}
\newcommand\CF{{\mathcal F}}
\newcommand\CL{{\mathcal L}}
\newcommand\qand{{\quad\mbox{and}\quad}}
\newcommand\GUE{{\text{\sf GUE}}}
\newcommand\Cov{{\sf Cov}}
\newcommand\rd{{\rm d}}
\newcommand\6{\, {\rm d}}
\newcommand\ri{{\rm i}}
\newcommand\e{{\rm e}}
\newcommand\unit{{{\pmb 1}}}
\newcommand\tr{{\text{\rm tr}}}
\newcommand\Tr{{\text{\rm Tr}}}
\newcommand\Ext{{\sf Ext}}
\newcommand\Cred{C^*_{\rm red}}
\newcommand\im{{\sf Im}}
\newcommand\re{{\sf Re}}
\newcommand\supp{{\rm supp}}
\begin{document}

\title{Asymptotic Expansions for the \\ Gaussian Unitary Ensemble\date{}}

\author{\sc Uffe Haagerup and Steen Thorbj{\o}rnsen}

\maketitle

\begin{abstract}
Let $g\colon{\mathbb R}\to{\mathbb \C}$ be a $C^{\infty}$-function with
all derivatives bounded and let ${\rm tr}_n$ denote the normalized
trace on the $n\times n$ matrices. In the paper \cite{em} Ercolani and
McLaughlin established asymptotic expansions of the mean value
${\mathbb E}\{{\rm tr}_n(g(X_n))\}$ for a rather general class of
random matrices $X_n$, including the Gaussian Unitary Ensemble
(GUE). Using an analytical approach, we provide in the present paper an
alternative proof of this asymptotic expansion in the GUE
case. Specifically we derive for a GUE random matrix $X_n$ that
\[
\E\big\{{\rm tr}_n(g(X_n))\big\}=
\frac{1}{2\pi}\int_{-2}^2g(x)\sqrt{4-x^2}\,{\rm d}x
+\sum_{j=1}^k\frac{\alpha_j(g)}{n^{2j}}+ O(n^{-2k-2}),
\]
where $k$ is an arbitrary positive integer.
Considered as mappings of $g$, we determine the coefficients
$\alpha_j(g)$, $j\in{\mathbb N}$, as distributions (in the sense of
L.~Schwarts). We derive a similar asymptotic expansion for the
covariance ${\sf Cov}\{{\rm Tr}_n[f(X_n)],{\rm Tr}_n[g(X_n)]\}$, 
where $f$ is a function of the same kind as $g$, and 
${\rm Tr}_n=n{\rm tr}_n$.
Special focus is drawn to the case where
$g(x)=\frac{1}{\lambda-x}$ and $f(x)=\frac{1}{\mu-x}$ for
$\lambda,\mu$ in ${\mathbb C}\setminus{\mathbb R}$. In this case the
mean and covariance considered above correspond to, respectively, the
one- and two-dimensional Cauchy (or Stieltjes) transform of the
${\sf GUE}(n,\frac{1}{n})$. 
\end{abstract}

\section{Introduction}

Since the groundbreaking paper \cite{vo} by Voiculescu, the
asymptotics for families of large, independent GUE random matrices has
become an important tool in the theory of operator algebras. In the
paper \cite{ht5} it was established that if
$X_1^{(n)},\ldots,X_r^{(n)}$ are independent $\GUE(n,\frac{1}{n})$
random matrices (see Definition~\ref{def gue} below), then with
probability one we have for any polynomial $p$ in $r$ non-commuting 
variables that
\begin{equation}
\lim_{n\to\infty}\big\|p\big(X_1^{(n)},\ldots,X_r^{(n)}\big)\big\|=
\|p(x_1,\ldots,x_r)\|,
\label{eq0.1}
\end{equation}
where $\{x_1,\ldots,x_r\}$ is a free semi-circular family of
selfadjoint operators in a $C^*$-probability space $(\CA,\tau)$ (see
\cite{vdn} for definitions), and where $\|\cdot\|$ denotes the
operator norm. This result leads in particular 
to the fact that there are non-invertible elements in the extension
semi-group of the reduced $C^*$-algebra associated to the free group
on $r$ generators (see \cite{ht5}).

A key step in the proof of \eqref{eq0.1} was to establish precise
estimates of the expectation and variance
of $\tr_n[g(p(X_1^{(n)},\ldots,X_r^{(n)}))]$, where $\tr_n$ denotes
the normalized trace, $g$ is a $C^\infty$-function with compact
support, and where we assume now that $p$ is a selfadjoint polynomial.
In fact it was established in \cite{ht5} and \cite{hst} that in this
setup we have the estimates:
\begin{align}
\E\big\{\tr_n\big[g\big(p(X_1^{(n)},\ldots,X_r^{(n)})\big)\big]\big\}
&=\tau[g(p(x_1,\ldots,x_r))]+O(n^{-2}),
\label{eq0.2}
\\[.2cm]
\V\big\{\tr_n\big[g\big(p(X_1^{(n)},\ldots,X_r^{(n)})\big)\big]\big\}
&=O(n^{-2}).
\label{eq0.3}
\end{align}
Furthermore, if the derivative $g'$ vanishes on the spectrum of the
operator $p(x_1,\ldots,x_r)$, then we actually have that 
\[
\V\big\{\tr_n\big[g\big(p(X_1^{(n)},\ldots,X_r^{(n)})\big)\big]\big\}
=O(n^{-4}).
\]
If we assume instead that $g$ is a polynomial, then the left hand
sides of \eqref{eq0.2} and \eqref{eq0.3} may actually be expanded as
polynomials in $n^{-2}$. More precisely it was proved in \cite{th}
that for any function $w\colon\{1,2,\ldots,p\}\to\{1,2,\ldots,r\}$
we have that\footnote{When $r=1$, formula \eqref{eq0.4} corresponds to the
  Harer-Zagier recursion formulas (see \cite{ht1}).}
\begin{equation}
\E\big\{\tr_n\big[X^{(n)}_{w(1)}X^{(n)}_{w(2)}\cdots
X^{(n)}_{w(p)}\big]\big\}
=\sum_{\gamma\in T(w)}n^{-2\sigma(\gamma)},
\label{eq0.4}
\end{equation}
where $T(w)$ is a certain class of permutations of $\{1,2,\ldots,p\}$,
and $\sigma(\gamma)\in\N_0$ for all $\gamma$ in $T(w)$ (see \cite{th} or
\cite{mn} for details). 
It was established furthermore in \cite{mn} that for two functions 
$w\colon\{1,2,\ldots,p\}\to\{1,2,\ldots,r\}$ and 
$v\colon\{1,2,\ldots,q\}\to\{1,2,\ldots,r\}$ we have that
\begin{equation}
\E\big\{\tr_n\big[X^{(n)}_{w(1)}X^{(n)}_{w(2)}\cdots
X^{(n)}_{w(p)}\big]\tr_n\big[X^{(n)}_{v(1)}X^{(n)}_{v(2)}\cdots
X^{(n)}_{v(q)}\big]\big\}
=\sum_{\gamma\in T(w,v)}n^{-2\sigma(\gamma)}
\label{eq0.5}
\end{equation}
where now $T(w,v)$ is a certain class of permutations of
$\{1,2,\ldots,p+q\}$ and again $\sigma(\gamma)\in\N_0$ for all
$\gamma$ in $T(w,v)$ (see \cite{mn} for details).

In view of \eqref{eq0.4} and \eqref{eq0.5} it is natural to ask
whether the left hand sides of \eqref{eq0.2} and \eqref{eq0.3} may
in general be expanded as ``power series'' in $n^{-2}$, when $g$ is, say,
a compactly supported $C^{\infty}$-function. In the case $r=1$,
this question was answered affirmatively by Ercolani and McLaughlin
(see \cite[Theorem~1.4]{em}) for a more general class of random matrices
than the GUE. More precisely, Ercolani and McLaughlin established for a
single matrix $X_n$ (from the considered class of random matrices) and 
any $C^{\infty}$-function $g$ with at most polynomial growth the
existence of a sequence $(\alpha_j(g))_{j\in\N_0}$ of complex
numbers, such that for any $n$ in $\N$ and $k$ in $\N_0$,
\begin{equation}
\E\big\{\tr_n(g(X_n))\big\}=
\sum_{j=0}^k\frac{\alpha_j(g)}{n^{2j}}+ O(n^{-2k-2}).
\label{eq0.6}
\end{equation}
Their proof is rather involved and is based on
Riemann-Hilbert techniques developed by Deift, McLaughlin and
co-authors. In this paper we provide an alternative proof
for \eqref{eq0.6} in the case 
where $X_n$ is a $\GUE(n,\frac{1}{n})$ random matrix. For technical
ease, we only establish \eqref{eq0.6} for functions in the class
$C_b^{\infty}(\R)$ consisting of all $C^{\infty}$-functions
$g\colon\R\to\C$, such that all derivatives $g^{(k)}$, $k\in\N_0$, are
bounded on $\R$. However, all (relevant) results of the present paper can
easily be extended to all $C^{\infty}$-functions with at
most polynomial growth. For each $j$ in $\N$ we show that the
coefficient $\alpha_j(g)$ is explicitly given in the form:
\[
\alpha_j(g)=\frac{1}{2\pi}\int_{-2}^2[T^jg](x)\sqrt{4-x^2}\6x
\]
for a certain linear operator $T\colon C_b^{\infty}(\R)\to
C_b^{\infty}(\R)$ (see Theorem~\ref{eksistens af T} and
Corollary~\ref{asymptotisk udv.}), and we describe $\alpha_j$
explicitly as a distribution (in the sense of L.~Schwarts) in terms of
Chebychev polynomials (cf.\ Corollary~\ref{distributioner}).
The proof of \eqref{eq0.6} is based
on the fact, proved by G\"otze and Tikhomirov in \cite{gt}, 
that the spectral density $h_n$ of a $\GUE(n,\frac{1}{n})$
random matrix satisfies the following third order differential
equation:
\begin{equation}
\frac{1}{n^2}h_n'''(x) + (4-x^2)h_n'(x) + xh_n(x) = 0,
\quad (x\in\R).
\label{eq0.7}
\end{equation}
In the special case where $g(x)=\frac{1}{\lambda-x}$ for some non-real
complex number $\lambda$, the integral $\int_{\R}g(x)h_n(x)\6x$ is the
Cauchy (or Stieltjes) transform $G_n(\lambda)$ for the measure
$h_n(x)\6x$, and asymptotic expansions like \eqref{eq0.6} appeared
already in the paper \cite{aps} for a rather general class of random
matrices (including the GUE). In the GUE case, our analytical approach
leads to the following explicit expansion (see Section~\ref{exp af
  Cauchy transform}):
\begin{equation}
G_n(\lambda)
=\eta_0(\lambda)+\frac{\eta_1(\lambda)}{n^2}+\frac{\eta_2(\lambda)}{n^4}
+\cdots+\frac{\eta_k(\lambda)}{n^{2k}}+O(n^{-2k-2}),
\label{eq0.12}
\end{equation}
where
\[
\eta_0(\lambda)=\frac{\lambda}{2}-\frac{1}{2}(\lambda^2-4)^{1/2},
\qand
\eta_j(\lambda)=\sum_{r=2j}^{3j-1}C_{j,r}(\lambda^2-4)^{-r-1/2} \quad(j\in\N).
\]
The constants $C_{j,r}$, $2j\le r\le 3j-1$, appearing above are
positive numbers for which we provide recursion formulas (see
Proposition~\ref{eksplicit udtryk for eta}). 

As for the ``power series expansion'' of \eqref{eq0.3}, consider again for
each $n$ a single $\GUE(n,\frac{1}{n})$ random matrix.
For any functions $f,g$ from $C_b^\infty(\R)$, we establish in
Section~\ref{2-dim udv} the existence of a sequence 
$(\beta_j(f,g))_{j\in\N_0}$ of complex numbers, such that for any $k$
in $\N_0$ and $n$ in $\N$,
\begin{equation}
\Cov\big\{\Tr_n[f(X_n)],\Tr_n[g(X_n)]\big\}
=\sum_{j=0}^k\frac{\beta_j(f,g)}{n^{2j}}+O(n^{-2k-2}),
\label{eq0.9}
\end{equation}
where $\Tr_n$ denotes the un-normalized trace on $M_n(\C)$,
and where the covariance $\Cov[Y,Z]$ of two
complex valued square integrable random variables is defined by
\[
\Cov\{Y,Z\}=\E\big\{(Y-\E\{Y\})(Z-\E\{Z\})\big\}.
\]
The proof of \eqref{eq0.9} is based on the following formula,
essentially due to Pastur and Scherbina (see \cite{ps}):
\begin{equation}
\Cov\big\{\Tr_n[f(X_n)],\Tr_n[g(X_n)]\big\}=
\int_{\R^2}\Big(\frac{f(x)-f(y)}{x-y}\Big)
\Big(\frac{g(x)-g(y)}{x-y}\Big)\rho_n(x,y)\6x\6y,
\label{eq0.10}
\end{equation}
where the kernel $\rho_n$ is given by
\[
\rho_n(x,y)=\tfrac{n}{4}\big[\varphi_n(\textstyle{\sqrt{\tfrac{n}{2}}x})
\varphi_{n-1}(\sqrt{\tfrac{n}{2}}y)-\varphi_{n-1}(\sqrt{\tfrac{n}{2}}x)
\varphi_n(\sqrt{\tfrac{n}{2}}y)\big]^2,
\]
with $\varphi_n$ the $n$'th Hermite function (see formula \eqref{e3.1}
below). The essential step then is to establish the formula
(see Theorem~\ref{formel for rho-n}):
\begin{equation}
\rho_n(x,y)=\frac{1}{4}\big[\tilde{h}_n(x)\tilde{h}_n(y)-4h_n'(x)h_n'(y)
-\tfrac{1}{n^2}h_n''(x)h_n''(y)\big], \qquad((x,y)\in\R^2),
\label{eq0.11}
\end{equation}
where $\tilde{h}_n(x)=h_n(x)-xh_n'(x)$, and $h_n$ is as before the spectral
density of $\GUE(n,\frac{1}{n})$. Using \eqref{eq0.10}-\eqref{eq0.11}
and Fubini's Theorem, the expansion \eqref{eq0.9} may be derived
from \eqref{eq0.6}. 

In the particular case where
\[
f(x)=\frac{1}{\lambda-x}, \qand g(x)=\frac{1}{\mu-x}, \qquad(x\in\R)
\]
we obtain in Section~\ref{sec to-dim G} the following specific
expansion for the two-dimensional Cauchy-transform:
\begin{equation}
\Cov\big\{\Tr_n[(\lambda\unit-X_n)^{-1}],\Tr_n[(\mu\unit-X_n)^{-1}]\big\}
=\frac{1}{2(\lambda-\mu)^2}\sum_{j=0}^k\frac{\Gamma_j(\lambda,\mu)}{n^{2j}}
+O(n^{-2k-2}),
\label{eq0.13}
\end{equation}
where the coefficients $\Gamma_j(\lambda,\mu)$ are given explicitly in
terms of the functions $\eta_l$ appearing in \eqref{eq0.12} (see
Corollary~\ref{asymp exp for 2-dim G}). The leading term
$\frac{\Gamma_0(\lambda,\mu)}{2(\lambda-\mu)^2}$ may also be identified as
the integral
\[
\int_{\R^2}\Big(\frac{(\lambda-x)^{-1}
-(\lambda-y)^{-1}}{x-y}\Big)\Big(\frac{(\mu-x)^{-1}
-(\mu-y)^{-1}}{x-y}\Big)\rho(x,y)\6x\6y,
\]
where $\rho(x,y)\6x\6y$ is the weak limit of the measures
$\rho_n(x,y)\6x\6y$ as $n\to\infty$ (cf.\ \eqref{eq0.10}).
The limiting density $\rho$ is explicitly given by 
\begin{equation}
\rho(x,y)=\frac{1}{4\pi^2}\frac{4-xy}{\sqrt{4-x^2}\sqrt{4-y^2}}
1_{(-2,2)}(x)1_{(-2,2)}(y), \qquad(x,y\in\R),
\label{eq0.14}
\end{equation}
and we provide a proof of this fact at the end of Section~\ref{2-dim
  udv}. 

In the paper \cite{aps} the authors derive for a rather general class
of random matrices an expansion for the two-dimensional Cauchy
transform in the form: 
\[
\Cov\big\{\Tr_n[(\lambda\unit-X_n)^{-1}],\Tr_n[(\mu\unit-X_n)^{-1}]\big\}
=\sum_{j=0}^kd_{j,n}(\lambda,\mu)n^{-j}+o(n^{-k}),
\]
where the leading coefficient $d_{0,n}$ is given explicitly by
\begin{equation}
d_{0,n}(\lambda,\mu)=\frac{1}{2(\lambda-\mu)^2}
\Big(\frac{\lambda\mu-a^2}{\sqrt{\lambda^2-a^2}\sqrt{\mu^2-a^2}}-1\Big),
\label{eq0.9a}
\end{equation}
with $a$ the variance of the relevant limiting semi-circle
distribution. In the GUE set-up considered in the present paper,
$a=2$, and in this case it is easily checked that $d_{0,n}$ is identical
to the leading coefficient
$\frac{\Gamma_0(\lambda,\mu)}{2(\lambda-\mu)^2}$ in \eqref{eq0.13}. 

The density $\rho$ given by \eqref{eq0.14} has
previously appeared in the paper \cite{c-d}. There the author proves
that if $X_n$ is a $\GUE(n,\frac{1}{n})$ random matrix, then for any
polynomial $f$, such that $\int_{-2}^2f(t)\sqrt{4-t^2}\6t=0$, the
random variable $\Tr_n(f(X_n))$ 
converges, as $n\to\infty$, in distribution to the Gaussian
distribution $N(0,\sigma^2)$, where the limiting variance $\sigma^2$
is given by
\[
\sigma^2=\int_{[-2,2]\times[-2,2]}
\Big(\frac{f(x)-f(y)}{x-y}\Big)^2\rho(x,y)\6x\6y.
\]
The density $\rho$ has also been identified in the physics
literature as the (leading term for the) correlation function
of the formal level density for the GUE (see \cite{kkp} and references
therein).

In a forthcoming paper (under preparation) we establish results
similar to those obtained in the present paper for random matrices of
Wishart type.

\section{Auxiliary differential equations}\label{sec 2}

In this section we consider two differential equations, both of which
play a crucial role in the definition of the operator $T$ introduced in
Section~\ref{sec 1-dim asympt exp}. The former is a third order
differential equation for the spectral density of the GUE.
We start thus by reviewing the GUE and its spectral distribution.

Consider a random $n\times n$ matrix $Y=(y_{ij})_{1\le i,j\le n}$
defined on some probability space $(\Omega,\CF,P)$. The distribution
of $Y$ is then the probability measure $P_Y$ on the set $M_n(\C)$ of
$n\times n$-matrices (equipped with Borel-$\sigma$-algebra) given by
\[
P_Y(B)=P(Y\in B)
\]
for any Borel-subset $B$ of $M_n(\C)$.

Throughout the paper we focus exclusively on the {\it Gaussian Unitary Ensemble}
(GUE), which is the class of random matrices defined as follows:

\begin{definition}\label{def gue}
Let $n$ be a positive integer and $\sigma^2$ a positive real number.
By $\GUE(n,\sigma^2)$ we then denote the distribution of a random
$n\times n$ matrix $X=(x_{ij})_{1\le i,j\le n}$ (defined on some
probability space) satisfying the following four conditions: 

\begin{enumerate}

\item For any $i,j$ in $\{1,2,\ldots,n\}$, $x_{ij}=\overline{x_{ji}}$.

\item The random variables $x_{ij}$, $1\le i\le j\le n$, are
independent.

\item If $1\le i<j\le n$, then $\re(x_{ij}),\im(x_{ij})$ are i.i.d.\
  with distribution $N(0,\frac{1}{2}\sigma^2)$.

\item For any $i$ in $\{1,\ldots,n\}$, $x_{ii}$ is a real-valued
  random variable with distribution $N(0,\sigma^2)$.

\end{enumerate}
\end{definition}

We recall now the specific form of the spectral distribution of a
$\GUE$ random matrix.
Let $\varphi_0,\varphi_1,\varphi_2,\ldots,$ be the sequence of Hermite
functions, i.e.,
\begin{equation}
\varphi_k(t)=\big(2^kk!\sqrt{\pi}\big)^{-1/2}H_k(t)\exp(-t^2/2),
\label{e3.1}
\end{equation}
where $H_0,H_1,H_2,\ldots,$ is the sequence of Hermite polynomials,
i.e.,
\begin{equation}
H_k(t)=(-1)^k\exp(t^2)\Big[\frac{\rd^k}{\rd t^k}\exp(-t^2)\Big].
\label{e3.2}
\end{equation}
Recall then (see e.g.\ \cite[Corollary~1.6]{ht1}) that the spectral
distribution of a random matrix $X$ from $\GUE(n,\frac{1}{n})$ has
density
\begin{equation}
h_n(t)=\frac{1}{\sqrt{2n}}\sum_{k=1}^{n-1}\textstyle{
\varphi_k\big(\sqrt{\frac{n}{2}}t\big)^2},  
\label{e3.3}
\end{equation}
w.r.t.\ Lebesgue measure. More precisely,
\[
\E\big\{\tr_n(g(X))\big\}=\int_{\R}g(t)h_n(t) \6t,
\]
for any Borel function $g\colon\R\to\R$, for which the integral on the
right hand side is well-defined. 

G\"otze and Tikhomirov established the following third order
differential equation for $h_n$:

\begin{proposition}[\cite{gt}]\label{diff-ligning for h}
For each $n$ in $\N$, $h_n$ is a solution to the differential
equation:
\[
\frac{1}{n^2}h_n'''(t) + (4-t^2)h_n'(t) + th_n(t) = 0,
\quad (t\in\R).
\]
\end{proposition}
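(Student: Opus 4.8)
The plan is to reduce the stated third-order equation for $h_n$ to a pure Hermite-function identity and then to establish that identity from the two basic facts about the Hermite functions: the Hermite differential equation
\[
\varphi_k''(x)+(2k+1-x^2)\varphi_k(x)=0,
\]
and the three-term recurrence in its two forms $\varphi_k'=\sqrt{k/2}\,\varphi_{k-1}-\sqrt{(k+1)/2}\,\varphi_{k+1}$ and $x\varphi_k=\sqrt{k/2}\,\varphi_{k-1}+\sqrt{(k+1)/2}\,\varphi_{k+1}$. First I would remove the scaling: writing $c=\sqrt{n/2}$ and letting $S_n(x)=\sum_{k=0}^{n-1}\varphi_k(x)^2$ be the sum underlying \eqref{e3.3}, that formula reads $h_n(t)=\frac{1}{2c}S_n(ct)$. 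Substituting $x=ct$ and using $c^2=n/2$, the claimed equation for $h_n$ is, after clearing denominators, equivalent to
\[
S_n'''(x)+(8n-4x^2)S_n'(x)+4xS_n(x)=0,
\]
so it suffices to prove this identity for $S_n$.

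Next I would establish a cubic identity for each summand separately. Setting $p_k=\varphi_k^2$ and differentiating three times, using the Hermite equation at each step to replace $\varphi_k''$ by $(x^2-(2k+1))\varphi_k$, yields the clean relation
\[
p_k'''=4\bigl(x^2-(2k+1)\bigr)p_k'+4xp_k .
\]
Summing over $0\le k\le n-1$ gives
\[
S_n'''=4x^2S_n'+4xS_n-4\sum_{k=0}^{n-1}(2k+1)p_k' .
\]

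The main obstacle is the weighted sum $\sum_{k=0}^{n-1}(2k+1)p_k'$, whose weights $2k+1$ destroy the telescoping that would otherwise occur. The key identity I must prove is
\[
\sum_{k=0}^{n-1}(2k+1)p_k'=2nS_n'+2xS_n ,
\]
for once it is available, substitution collapses the previous display to exactly the required equation for $S_n$. To prove it I would write $p_k'=2\varphi_k\varphi_k'$ and use the recurrences to express the relevant products through the consecutive products $b_k:=\varphi_k\varphi_{k-1}$ (with $\varphi_{-1}:=0$), namely $\varphi_k\varphi_k'=\sqrt{k/2}\,b_k-\sqrt{(k+1)/2}\,b_{k+1}$ and $x\varphi_k^2=\sqrt{k/2}\,b_k+\sqrt{(k+1)/2}\,b_{k+1}$. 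Abel summation then reduces each sum to a boundary term in $b_n=\varphi_n\varphi_{n-1}$: the unweighted sum telescopes to $S_n'=-\sqrt{2n}\,b_n$, the sum $xS_n$ collapses to $\sum_{k=1}^{n-1}\sqrt{2k}\,b_k+\sqrt{n/2}\,b_n$, and the weighted sum $\sum_{k=0}^{n-1}(2k+1)\varphi_k\varphi_k'$ collapses to $\sum_{k=1}^{n-1}\sqrt{2k}\,b_k-(2n-1)\sqrt{n/2}\,b_n$.

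Comparing these three collapsed expressions confirms the key identity (the interior sums $\sum_{k=1}^{n-1}\sqrt{2k}\,b_k$ cancel, and the boundary coefficients match because $2n\sqrt{n/2}=n\sqrt{2n}$). Substituting back then gives $S_n'''+(8n-4x^2)S_n'+4xS_n=0$, and undoing the rescaling $x=ct$ recovers the asserted third-order equation for $h_n$. The only genuinely delicate point is the bookkeeping in the Abel summation of the weighted sum; everything else is the mechanical differentiation in the second step and the elementary change of variables in the first.
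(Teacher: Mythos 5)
Your proof is correct, and it is worth noting at the outset that the paper itself offers no argument here: it simply cites Lemma~2.1 of G\"otze--Tikhomirov \cite{gt}. You have therefore supplied a complete, self-contained derivation where the paper has only a reference. I checked each step. The rescaling is right: with $c=\sqrt{n/2}$ and $h_n(t)=\tfrac{1}{2c}S_n(ct)$ one gets $h_n'''(t)=\tfrac{c^2}{2}S_n'''(ct)$, and multiplying the target equation by $2n$ after substituting $x=ct$ does give $S_n'''+(8n-4x^2)S_n'+4xS_n=0$. The summand identity $p_k'''=4(x^2-(2k+1))p_k'+4xp_k$ follows exactly as you say by differentiating $p_k=\varphi_k^2$ three times and eliminating $\varphi_k''$ via the Hermite equation (the cross terms $4\varphi_k'\varphi_k''$ recombine into $2(x^2-(2k+1))p_k'$). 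Your three Abel/telescoping computations are also correct: $S_n'=-\sqrt{2n}\,b_n$ (which is precisely formula \eqref{e4.7} that the paper records later in Lemma~\ref{formel for zeta-n}), $xS_n=\sum_{k=1}^{n-1}\sqrt{2k}\,b_k+\sqrt{n/2}\,b_n$, and $\sum_{k=0}^{n-1}(2k+1)\varphi_k\varphi_k'=\sum_{k=1}^{n-1}\sqrt{2k}\,b_k-(2n-1)\sqrt{n/2}\,b_n$; the weights $(2k+1)-(2k-1)=2$ produce the interior sum and the boundary terms match since $2n\sqrt{n/2}=n\sqrt{2n}$, confirming $\sum_{k=0}^{n-1}(2k+1)p_k'=2nS_n'+2xS_n$. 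Substituting back closes the argument. A pleasant feature of your route is that it uses only the two standard Hermite recurrences \eqref{e4.6} and \eqref{e4.8} and the derivative identity \eqref{e4.7}, i.e.\ exactly the toolkit the paper already deploys in Section~\ref{2-dim udv}, so your proof could be inserted into Section~\ref{sec 2} with no new prerequisites.
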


\begin{proof} See \cite[Lemma~2.1]{gt}.
\end{proof}

Via the differential equation in
Proposition~\ref{diff-ligning for h} and integration by parts, we are
lead (see the proof of Theorem~\ref{eksistens af T} below)
to consider the following differential equation:
\begin{equation}
(t^2-4)f'(t) + 3tf(t) = g(t)
\label{e3.4a}
\end{equation}
for suitable given $C^{\infty}$-functions $g$. 
The same differential equation was studied by G\"otze and Tikhomorov in
\cite[lemma 3.1]{gt2} for a different class of functions $g$ in
connection with their Stein's Method approach to Wigner's semicircle law.

\begin{proposition}\label{loesning af diff-ligning}
For any $C^{\infty}$-function $g\colon\R\to\C$, the differential
equation
\begin{equation}
(t^2-4)f'(t) + 3tf(t) = g(t), \quad (t\in\R),
\label{e3.6}
\end{equation}
has unique $C^{\infty}$-solutions on $(-\infty,2)$ and on
$(-2,\infty)$. Furthermore, there is a $C^{\infty}$-solution to \eqref{e3.6}
on all of $\R$, if and only if $g$ satisfies
\begin{equation}
\int_{-2}^2 g(t)\sqrt{4-t^2}\6t = 0.
\label{e3.6a}
\end{equation}
\end{proposition}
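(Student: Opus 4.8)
The plan is to treat \eqref{e3.6} as a first-order \emph{linear} ODE and to exploit the fact that its only singularities occur where the leading coefficient $t^2-4$ vanishes, i.e.\ at $t=\pm2$. Away from these two points the equation is regular, and on any subinterval of $\R\setminus\{-2,2\}$ I would divide by $t^2-4$ and use the integrating factor $|t^2-4|^{3/2}$. Concretely this gives $\frac{\rd}{\rd t}\big[(t^2-4)^{3/2}f\big]=(t^2-4)^{1/2}g$ on $\{|t|>2\}$ and $\frac{\rd}{\rd t}\big[(4-t^2)^{3/2}f\big]=-(4-t^2)^{1/2}g$ on $(-2,2)$. In particular every homogeneous solution is a constant multiple of $|t^2-4|^{-3/2}$ on each of the three intervals, and this blows up at $\pm2$. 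This immediately yields the uniqueness statements on $(-\infty,2)$ and on $(-2,\infty)$: the difference of two $C^\infty$ solutions is a homogeneous solution which, being bounded near the interior singular point ($-2$, resp.\ $2$), forces the relevant constants to vanish and hence is identically zero.

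For existence on $(-\infty,2)$ I would integrate the two displayed identities using the interior singular point $-2$ as base point, producing a candidate $f$ equal to $(t^2-4)^{-3/2}\int_{-2}^t(s^2-4)^{1/2}g(s)\6s$ on $(-\infty,-2)$ and to $-(4-t^2)^{-3/2}\int_{-2}^t(4-s^2)^{1/2}g(s)\6s$ on $(-2,2)$. Each branch is $C^\infty$ on its side and bounded at $-2$, so the real content is to show that $f$ is $C^\infty$ \emph{across} $-2$. The substitution $t=-2+x$, $s=-2+v$ factors the weight and reduces both branches to the model expression $x^{-3/2}\int_0^x w^{1/2}\phi(w)\6w$ with $\phi$ smooth; expanding $\phi$ by Taylor to order $N$ and integrating term by term turns the half-integer powers into the monomials $x^{j}/(j+\tfrac32)$ plus a remainder of size $O(x^{N+1})$, so the model expression is $C^N$ for every $N$, hence $C^\infty$. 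Thus $f\in C^\infty(-\infty,2)$. The statement on $(-2,\infty)$ follows identically, or from the reflection $t\mapsto-t$, $g\mapsto-g(-\,\cdot)$, which preserves \eqref{e3.6} and interchanges the two half-lines.

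For the global statement I would take the unique solution $f$ on $(-\infty,2)$ just constructed and ask when it extends $C^\infty$ across the remaining singular point $t=2$. On $(-2,2)$ it equals $-(4-t^2)^{-3/2}\int_{-2}^t(4-s^2)^{1/2}g(s)\6s$; as $t\to2^-$ the prefactor blows up while the integral tends to $I:=\int_{-2}^2(4-s^2)^{1/2}g(s)\6s=\int_{-2}^2 g(t)\sqrt{4-t^2}\6t$, so $f$ stays bounded near $2$ exactly when $I=0$. When $I\neq0$ no continuous—let alone $C^\infty$—global solution can exist, since any global solution restricted to $(-\infty,2)$ must coincide with $f$ by uniqueness. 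When $I=0$ the integral is $O\big((2-t)^{3/2}\big)$, so $f$ is bounded at $2$, and the same substitution and model smoothness lemma applied at $t=2$ show that $f$ extends $C^\infty$ across $2$, giving the desired global solution; this establishes the equivalence with \eqref{e3.6a}.

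The main obstacle is precisely this smoothness at the singular points, since the explicit solutions carry the non-smooth weights $|t^2-4|^{\pm3/2}$ and one must verify that the half-integer powers cancel. Beyond the model lemma I would streamline the gluing as follows: once each one-sided solution is known to be $C^\infty$ up to the boundary, differentiating \eqref{e3.6} $n$ times and letting $t\to t_0\in\{-2,2\}$ gives $t_0(2n+3)f^{(n)}(t_0)+(n^2+2n)f^{(n-1)}(t_0)=g^{(n)}(t_0)$ with $t_0(2n+3)\neq0$. This recursion determines the one-sided derivatives of all orders from $g$ alone, so they agree on the two sides, and a standard gluing lemma then upgrades one-sided smoothness to genuine $C^\infty$ across $\pm2$. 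In this way the only nontrivial analytic input is the elementary model estimate of the second paragraph.
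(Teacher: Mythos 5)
Your overall route coincides with the paper's: explicit integration of the first-order linear equation with the weight $|t^2-4|^{\pm 3/2}$, uniqueness from the blow-up of the homogeneous solutions at the interior singular point of each half-line, a reflection $t\mapsto -t$ to transfer the result to the other half-line, and the observation that the two one-sided solutions differ on $(-2,2)$ by a multiple of $(4-t^2)^{-3/2}$ whose coefficient is exactly $\int_{-2}^2\sqrt{4-s^2}\,g(s)\6s$. All of that is correct and is essentially Steps I--III of the paper's proof (the paper bases its integral at $t=2$ rather than $t=-2$, which is an immaterial difference).

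The one step that does not hold up as written is the justification of your model smoothness lemma. From ``$x^{-3/2}\int_0^x w^{1/2}\phi(w)\6w = \sum_{j\le N} a_j x^j/(j+\tfrac32) + O(x^{N+1})$'' you conclude that the function is $C^N$. But agreeing with a polynomial up to an $O(x^{N+1})$ error at a point does not imply differentiability there, let alone $C^N$ regularity: $x^{N+1}\sin(x^{-M})$ is $O(x^{N+1})$ yet fails to be $C^1$ at $0$ once $M>N$. You must also control the derivatives of the remainder. The clean repair is the scaling substitution $w=vx$, which turns the model expression into $\int_0^1 v^{1/2}\phi(vx)\6v$; this is manifestly $C^\infty$ in $x$ by differentiation under the integral sign, and no Taylor expansion is needed at all. (This is precisely the device the paper uses, in the variable $y=t^2-4$ followed by $u=vy$.) Alternatively, keep the Taylor expansion but write the remainder of $\phi$ in integral form as $w^{N+1}\psi_N(w)$ with $\psi_N$ smooth, and apply the same substitution to see that the leftover term is $x^{N+2}\int_0^1 v^{N+5/2}\psi_N(vx)\6v$, which is $x^{N+2}$ times a smooth function. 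With this repair your closing recursion $t_0(2n+3)f^{(n)}(t_0)+n(n+2)f^{(n-1)}(t_0)=g^{(n)}(t_0)$ and the gluing argument are fine, and the proof is complete.
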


\begin{proof} We note first that by splitting $f$ and $g$ in their real and
imaginary parts, we may assume that they are both real-valued functions.

{\sf Uniqueness:} By linearity it suffices to prove uniqueness
in the case $g=0$, i.e., that $0$ is the only solution to the
homogeneous equation:
\begin{equation}
(t^2-4)f'(t) + 3tf(t) = 0
\label{e3.7}
\end{equation}
on $(-\infty,2)$ and on $(-2,\infty)$.
By standard methods we can solve \eqref{e3.7} on each of the
intervals $(-\infty,-2)$, $(-2,2)$ and $(2,\infty)$. We find thus
that any solution to \eqref{e3.7} must satisfy that
\[
f(t)=
\begin{cases}
c_1(t^2-4)^{-3/2}, &\textrm{if} \ t<-2, \\
c_2(4-t^2)^{-3/2}, &\textrm{if} \ t\in(-2,2), \\
c_3(t^2-4)^{-3/2}, &\textrm{if} \ t>2,
\end{cases}
\]
for suitable constants $c_1,c_2,c_3$ in $\R$. Since a solution to
\eqref{e3.7} on $(-\infty,2)$ is continuous at $t=-2$, it follows that
for such a solution we must have $c_1=c_2=0$. Similarly, $0$ is the
only solution to \eqref{e3.7} on $(-2,\infty)$.

{\sf Existence:} The existence part is divided into three steps.

{\sf Step I}. We start by finding the solution to \eqref{e3.6} on
$(-2,\infty)$. By standard methods, it follows that the solution to
\eqref{e3.6} on $(2,\infty)$ is given by
\[
f_c(t)=(t^2-4)^{-3/2}\int_2^t(s^2-4)^{1/2} g(s) \6s +
c(t^2-4)^{-3/2}, \quad (t\in(2,\infty), \ c\in\C),
\]
whereas the solution to \eqref{e3.6} on $(-2,2)$ is given by
\[
f_c(t)=(4-t^2)^{-3/2}\int_t^2(4-s^2)^{1/2} g(s) \6s +
c(4-t^2)^{-3/2}, \quad (t\in(-2,2), \ c\in\C).
\]
Now consider the function $f\colon(-2,\infty)\to\R$ given by
\begin{equation}
f(t)=
\begin{cases}
(4-t^2)^{-3/2}\int_t^2(4-s^2)^{1/2} g(s) \6s, &\textrm{if} \ 
t\in(-2,2), \\
\frac{1}{6}g(2), &\textrm{if} \ t=2, \\
(t^2-4)^{-3/2}\int_2^t(s^2-4)^{1/2} g(s) \6s &\textrm{if} \
t\in(2,\infty).
\end{cases}
\label{e3.8}
\end{equation}
We claim that $f$ is a $C^{\infty}$-function on $(-2,\infty)$. Once
this has been verified, $f$ is automatically a solution to
\eqref{e3.6} on all of $(-2,\infty)$ (by continuity at $t=2$). To see
that $f$ is a $C^{\infty}$-function on $(-2,\infty)$, it suffices to
show that $f$ is $C^{\infty}$ on $(0,\infty)$, and for this we use the
following change of variables:
\[
y=t^2-4, \quad \textrm{i.e.,} \quad t=\sqrt{y+4}, \quad (t>2, \ y>0).
\]
For $y$ in $(0,\infty)$, we have
\[
f\big(\sqrt{4+y}\big)=y^{-3/2}\int_2^{\sqrt{4+y}}(s^2-4)^{1/2} g(s) \6s
=y^{-3/2}\int_0^yu^{1/2}\cdot\frac{g(\sqrt{u+4})}{2\sqrt{u+4}} \6u.
\]
Using then the change of variables
\[
u=vy, \quad v\in[0,1],
\]
we find that
\[
f\big(\sqrt{4+y}\big)
=y^{-3/2}\int_0^1v^{1/2}y^{1/2}\cdot\frac{g(\sqrt{4+vy})}{2\sqrt{4+vy}}
y \6v
=\int_0^1v^{1/2}\cdot\frac{g(\sqrt{4+vy})}{2\sqrt{4+vy}} \6v,
\]
for any $y$ in $(0,\infty)$. Now, consider the function
\[
l(y)=\int_0^1v^{1/2}\cdot\frac{g(\sqrt{4+vy})}{2\sqrt{4+vy}} \6v,
\]
which is well-defined on $(-4,\infty)$. By the usual theorem on
differentiation under the integral sign (see
e.g.\ \cite[Theorem~11.5]{sc}), it follows that $l$ is a
$C^{\infty}$-function on $(-4+\epsilon,K)$, for any positive 
numbers $\epsilon$ and $K$ such that $0<\epsilon<K$. Hence $l$ is a
$C^{\infty}$-function on all of $(-4,\infty)$. Note also that
\[
l(0)=\textstyle{\frac{1}{6}}g(2) = f(2)
\]
Furthermore, by performing change of variables as above in the reversed
order, we find for any $y$ in $(-4,0)$ that
\[
l(y)=(-y)^{-3/2}\int_{\sqrt{4+y}}^2(4-s^2)^{1/2} g(s) \6s
=f\big(\sqrt{4+y}\big).
\]
Hence, we have established that
$f(\sqrt{4+y})=l(y)$ for any $y$ in $(-4,\infty)$. Since
$l$ is a $C^{\infty}$-function on $(-4,\infty)$, and since
$f(t)=l(t^2-4)$ for all $t$ in $(0,\infty)$, it follows that
$f\in C^{\infty}((0,\infty))$, as desired.

{\sf Step II.} Next, we find the solution to \eqref{e3.6} on
$(-\infty,2)$. For this, consider the differential equation:
\begin{equation}
(t^2-4)\psi'(t) + 3t\psi(t) = g(-t), \quad (t\in(-2,\infty)).
\label{e3.9}
\end{equation}

From what we established in Step~I, it follows that \eqref{e3.9} has a
unique solution $\psi$ in $C^{\infty}((-2,\infty))$. Then put
\begin{equation}
f_1(t)=-\psi(-t), \quad (t\in(-\infty,2)),
\label{e3.9a}
\end{equation}
and note that $f_1\in C^{\infty}((-\infty,2))$, 
which satisfies \eqref{e3.6} on $(-\infty,2)$.

{\sf Step III.} It remains to verify that the solutions $f$ and $f_1$,
found in Steps I and II above, coincide on $(-2,2)$, if and only if
equation \eqref{e3.6a} holds. With $\psi$ as in
Step II, note that $\psi$ is given by the right hand side of
\eqref{e3.8}, if $g(s)$ is replaced by $g(-s)$. Thus, for any $t$ in
$(-2,2)$, we have that
\begin{equation*}\begin{split}
f(t)-f_1(t)&=f(t)+\psi(-t)\\[.2cm]
&=(4-t^2)^{-3/2}\int_t^2(4-s^2)^{1/2} g(s) \6s
+(4-t^2)^{-3/2}\int_{-t}^2(4-s^2)^{1/2} g(-s) \6s
\\[.2cm]
&=(4-t^2)^{-3/2}\int_t^2(4-s^2)^{1/2} g(s) \6s
+(4-t^2)^{-3/2}\int_{-2}^t(4-s^2)^{1/2} g(s) \6s\\[.2cm]
&=(4-t^2)^{-3/2}\int_{-2}^2(4-s^2)^{1/2} g(s) \6s,
\end{split}\end{equation*}
from which the assertion follow readily.
\end{proof}

\begin{proposition}\label{ligning II}
For any function $C^{\infty}$-function $g\colon\R\to\C$,
there is a unique $C^{\infty}$-function $f\colon\R\to\C$, such that
\begin{equation}
g(t)=\frac{1}{2\pi}\int_{-2}^2g(s)\sqrt{4-s^2}\6s+(t^2-4)f'(t)+3tf(t),
\qquad(t\in\R).
\label{e3.13}
\end{equation}
If $g\in C_b^{\infty}(\R)$, then $f\in C_b^{\infty}(\R)$ too.
\end{proposition}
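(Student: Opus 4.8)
The plan is to reduce everything to Proposition~\ref{loesning af diff-ligning} by absorbing the integral term into a constant. First I would set
\[
c_g=\frac{1}{2\pi}\int_{-2}^2 g(s)\sqrt{4-s^2}\6s,\qquad \tilde g=g-c_g,
\]
so that \eqref{e3.13} becomes equivalent to the equation $(t^2-4)f'(t)+3tf(t)=\tilde g(t)$ treated in Proposition~\ref{loesning af diff-ligning}. Since $\int_{-2}^2\sqrt{4-t^2}\6t=2\pi$, the constant $c_g$ is chosen precisely so that $\int_{-2}^2\tilde g(t)\sqrt{4-t^2}\6t=0$; hence the existence criterion \eqref{e3.6a} is met, giving a $C^\infty$-solution $f$ on all of $\R$, and this $f$ is unique because the solutions on $(-\infty,2)$ and on $(-2,\infty)$ (two intervals covering $\R$) are each unique. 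This settles the first assertion.

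It then remains to prove $f\in C_b^\infty(\R)$ when $g\in C_b^\infty(\R)$, which is the substantial part. Note $\tilde g\in C_b^\infty(\R)$ as well, since subtracting a constant changes neither the derivatives nor their boundedness. Because $f$ is $C^\infty$ on $\R$, it and all its derivatives are automatically bounded on compact intervals, so only the behaviour as $t\to\pm\infty$ needs control. On $(2,\infty)$ the solution is $f(t)=(t^2-4)^{-3/2}\int_2^t(s^2-4)^{1/2}\tilde g(s)\6s$ by \eqref{e3.8}; bounding $|\tilde g|\le M$ and using $\int_2^t(s^2-4)^{1/2}\6s\le\tfrac12(t^2-4)$ gives $|f(t)|\le\tfrac{M}{2}(t^2-4)^{-1/2}=O(t^{-1})$, while the reflected expression $f(t)=-\psi(-t)$ from Step~II of Proposition~\ref{loesning af diff-ligning} yields the same decay as $t\to-\infty$. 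Feeding this back into \eqref{e3.6} then gives $f'(t)=(t^2-4)^{-1}\big(\tilde g(t)-3tf(t)\big)=O(t^{-2})$ at both ends.

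For the higher derivatives I would differentiate \eqref{e3.6} $k$ times by the Leibniz rule, obtaining for every $|t|>2$ the recursion
\[
(t^2-4)f^{(k+1)}(t)+(2k+3)t\,f^{(k)}(t)+(k^2+2k)f^{(k-1)}(t)=\tilde g^{(k)}(t).
\]
Solving for $f^{(k+1)}$ and arguing by induction: if $f^{(k)}(t)$ and $f^{(k-1)}(t)$ are $O(|t|^{-1})$ and $\tilde g^{(k)}$ is bounded, then after division by $t^2-4$ each term on the right is $O(|t|^{-2})$, so $f^{(k+1)}(t)=O(|t|^{-2})$ as $|t|\to\infty$. Combined with the base estimates above, this shows every $f^{(k)}$ decays at $\pm\infty$, hence is bounded on all of $\R$, so $f\in C_b^\infty(\R)$. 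I expect the main obstacle to be exactly this last step: existence and uniqueness follow at once from the previous proposition, whereas the uniform boundedness of all derivatives requires the asymptotic analysis at infinity, for which the Leibniz recursion together with the induction on the decay rate is the decisive tool.
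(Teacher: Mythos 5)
Your proposal is correct and follows essentially the same route as the paper: reduce to Proposition~\ref{loesning af diff-ligning} by subtracting the constant $\frac{1}{2\pi}\int_{-2}^2g(s)\sqrt{4-s^2}\,{\rm d}s$, bound $f$ at infinity via the explicit integral formula on $(2,\infty)$ and its reflection, and then bound all higher derivatives by the Leibniz recursion $(t^2-4)f^{(k+1)}+(2k+3)tf^{(k)}+k(k+2)f^{(k-1)}=g^{(k)}$ together with induction. The only (harmless) difference is that you track explicit decay rates $O(|t|^{-1})$, $O(|t|^{-2})$ where the paper only records boundedness.
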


\begin{proof} Let $g$ be a function from $C^{\infty}(\R)$, and consider the
function
\[
g_c=g-\frac{1}{2\pi}\int_{-2}^2g(s)\sqrt{4-s^2}\6s.
\]
Since $\int_{-2}^2 g_c(s)\sqrt{4-s^2}\6s=0$, it follows immediately
from Proposition~\ref{loesning af diff-ligning} that there is a
unique $C^{\infty}$-solution $f$ to \eqref{e3.13}. Moreover (cf.\ the proof of
Proposition~\ref{loesning af diff-ligning}), $f$ satisfies that
\[
f(t)=
\begin{cases}
(t^2-4)^{-3/2}\int_2^t(s^2-4)^{1/2} g_c(s) \6s, &\textrm{if} \
t\in(2,\infty), \\
-(t^2-4)^{-3/2}\int_2^{|t|}(s^2-4)^{1/2} g_c(-s) \6s, &\textrm{if} \
t\in(-\infty,-2).
\end{cases}
\]
Assume now that $g$ (and hence $g_c$) is in $C_b^{\infty}(\R)$, 
and choose a number $R$ in
$(0,\infty)$, such that $|g_c(t)|\le R$ for all $t$ in $\R$. Then, for any
$t$ in $(2,\infty)$, we find that
\[
|f(t)|\le (t^2-4)^{-3/2}R\int_2^t (s^2)^{1/2} \6s =
 \textstyle{\frac{1}{2}}R(t^2-4)^{-1/2},
\]
and thus $f$ is bounded on, say, $(3,\infty)$. It follows similarly that
$f$ is bounded on, say, $(-\infty,-3)$. Hence, since $f$ is continuous, $f$
is bounded on all of $\R$.

Taking first derivatives in \eqref{e3.13}, we note next that
\[
(t^2-4)f''(t)+5tf'(t) + 3f(t) = g'(t), \quad (t\in\R),
\]
and by induction we find that in general
\[
(t^2-4)f^{(k+1)}(t)+(2k+3)tf^{(k)}(t)+k(k+2)f^{(k-1)}(t) = g^{(k)}(t), 
\quad (k\in\N, \ t\in\R).
\]
Thus, for $t$ in $\R\setminus\{-2,2\}$,
\begin{equation}
f'(t)=\frac{-3tf(t)}{t^2-4}+\frac{g_c(t)}{t^2-4},
\label{e3.11}
\end{equation}
and
\begin{equation}
f^{(k+1)}(t)=\frac{-(2k+3)tf^{(k)}(t)}{t^2-4}
-\frac{k(k+2)f^{(k-1)}(t)}{t^2-4} +\frac{g^{(k)}(t)}{t^2-4}, \quad
(k\in\N).
\label{e3.12}
\end{equation}
Since $f$ and $g_c$ are bounded, it follows from \eqref{e3.11} that
$f'$ is bounded on, say, $\R\setminus[-3,3]$ and hence on all of
$\R$. Continuing by induction,
it follows similarly from \eqref{e3.12} that $f^{(k)}$ is bounded for all
$k$ in $\N$.
\end{proof}

\section{Asymptotic expansion for expectations of traces} 
\label{sec 1-dim asympt exp}

In this section we establish the asymptotic expansion
\eqref{eq0.6}. We start by introducing the vector space of infinitely
often bounded differentiable functions.

\begin{definition}\label{def frechet rum}
By $C_b^{\infty}(\R)$ we denote the vector space of
$C^{\infty}$-functions $f\colon\R\to\C$, satisfying that
\[
\forall k\in\N_0\colon \sup_{t\in\R}\Big| \frac{\rd^k}{\rd t^k}f(t)\Big| <
\infty.
\]
Moreover, we introduce a sequence $\|\cdot\|_{(k)}$ of norms on
$C_b^{\infty}(\R)$ as follows:
\[
\|g\|_{\infty}=\sup_{x\in\R}|g(x)|, \qquad(g\in C_b^{\infty}(\R)),
\]
and for any $k$ in $\N_0$:
\[
\|g\|_{(k)}=\max_{j=0,\ldots,k}\|g^{(j)}\|_{\infty},
\qquad(g\in C_b^{\infty}(\R)),
\]
where $g^{(j)}$ denotes the $j$'th derivative of $g$.
Equipped with the sequence $(\|\cdot\|_{(k)})_{k\in\N}$ of norms,
$C_b^{\infty}(\R)$ becomes a Fr\'ech\'et space (see e.g.\
Theorem~1.37 and Remark~1.38(c) in \cite{ru}).
\end{definition}

The following lemma is well-known, but for the reader's convenience we
include a proof.

\begin{lemma}\label{kont i Frechet rum}
Consider $C_b^{\infty}(\R)$ as a F\'ech\'et space as described in
Definition~\ref{def frechet rum}. Then
a linear mapping $L\colon C_b^{\infty}(\R)\to C_b^{\infty}(\R)$ is
continuous, if and only if the following condition is satisfied:
\begin{equation}
\forall k\in\N \ \exists m_k\in\N \ \exists C_k>0 \ \forall g\in
C_b^{\infty}(\R)\colon
\|Lg\|_{(k)}\le C_k\|g\|_{(m_k)}.
\label{e3.13a}
\end{equation}
\end{lemma}

\begin{proof} A sequence $(g_n)$ from $C_b^{\infty}(\R)$ converges to a
function $g$ in $C_b^{\infty}(\R)$ in the described Fr\'ech\'et
topology, if and only if $\|g_n-g\|_{(k)}\to0$ as $n\to\infty$ for any $k$
in $\N$. Therefore condition \eqref{e3.13a} clearly implies continuity
of $L$.

To establish the converse implication, 
note that by \cite[Theorem~1.37]{ru}, a neighborhood basis at $0$ for
$C_b^{\infty}(\R)$ is given by
\[
U_{k,\epsilon}=\{h\in C_b^{\infty}(\R)\mid \|h\|_{(k)}<\epsilon\},
\qquad(k\in\N, \ \epsilon>0).
\]
Thus, if $L$ is continuous, there exists for any $k$ in $\N$
an $m$ in $\N$ and a positive $\delta$, such that
$L(U_{m,\delta})\subseteq U_{k,1}$.
For any non-zero function $g$ in $C_b^{\infty}(\R)$, we have that
$\frac{1}{2}\delta\|g\|_{(m)}^{-1}g\in U_{m,\delta}$, and therefore
\[
\big\|\tfrac{1}{2}\delta\|g\|_{(m)}^{-1}Lg\big\|_{(k)}<1, 
\quad\mbox{i.e.\ }\quad
\|Lg\|_{(k)}<\tfrac{2}{\delta}\|g\|_{(m)},
\]
which establishes \eqref{e3.13a}.
\end{proof}

\begin{remark}\label{def af S og T}
Appealing to Proposition~\ref{ligning II}, we may define a mapping
$S\colon C_b^{\infty}(\R)\to C_b^{\infty}(\R)$ by setting, for $g$ in
$C_b^{\infty}(\R)$, $Sg=f$,
where $f$ is the unique solution to \eqref{e3.13}. By uniqueness, $S$
is automatically a linear mapping.
We define next the linear mapping $T\colon C_b^{\infty}(\R)\to
C_b^{\infty}(\R)$ by the formula:
\[
Tg=(Sg)''', \qquad(g\in C_b^{\infty}(\R)).
\]
\end{remark}

\begin{proposition}\label{kontinuitet af S}
The linear mappings $S,T\colon C_b^{\infty}(\R)\to C_b^{\infty}(\R)$ 
introduced in Remark~\ref{def af S og T} are
continuous when $C_b^{\infty}(\R)$ is viewed as a Fr\'ech\'et space as
described in Definition~\ref{def frechet rum}. 
\end{proposition}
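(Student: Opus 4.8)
The plan is to reduce everything to a single quantitative estimate for $S$ and then read off the estimate for $T$ for free. By Lemma~\ref{kont i Frechet rum} it suffices to produce, for each $k\in\N$, an index $m_k\in\N$ and a constant $C_k>0$ with $\|Sg\|_{(k)}\le C_k\|g\|_{(m_k)}$ for all $g\in C_b^{\infty}(\R)$; in fact I expect to obtain this with $m_k=k$. Granting such an estimate for $S$, the mapping $T$ is handled at once: since $Tg=(Sg)'''$ we have $\|Tg\|_{(k)}=\max_{0\le j\le k}\|(Sg)^{(j+3)}\|_{\infty}\le\|Sg\|_{(k+3)}\le C_{k+3}\|g\|_{(k+3)}$, so $T$ satisfies \eqref{e3.13a} as well and Lemma~\ref{kont i Frechet rum} gives its continuity. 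Thus the whole proof comes down to bounding $f:=Sg$ and its derivatives in sup-norm, linearly in the norms of $g$.

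Write $c_g=\frac{1}{2\pi}\int_{-2}^2 g(s)\sqrt{4-s^2}\6s$ and $g_c=g-c_g$, as in the proof of Proposition~\ref{ligning II}; since $\int_{-2}^2\sqrt{4-s^2}\6s=2\pi$ we have $|c_g|\le\|g\|_{\infty}$, hence $\|g_c\|_{\infty}\le2\|g\|_{\infty}$ while $g_c^{(i)}=g^{(i)}$ for $i\ge1$. I would cover $\R$ by a small closed neighborhood of $2$, a small closed neighborhood of $-2$, and the complement $B$ of their interiors. On $B$ the coefficients $\frac{|t|}{|t^2-4|}$ and $\frac{1}{|t^2-4|}$ occurring in \eqref{e3.11} and \eqref{e3.12} are bounded by a constant depending only on the chosen neighborhoods, and a base estimate $\|f\|_{\infty,B}\le C\|g\|_{\infty}$ is available from the explicit integral formulas for $f$ recorded in the proof of Proposition~\ref{ligning II} (the bound $|f(t)|\le\frac12\|g_c\|_{\infty}(t^2-4)^{-1/2}$ for $t>2$, its mirror image for $t<-2$, and the analogous estimate on $(-2,2)$, all of which are controlled once $t$ is kept away from $\pm2$). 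Feeding this into \eqref{e3.11}--\eqref{e3.12} and inducting on the order of the derivative yields $\|f^{(j)}\|_{\infty,B}\le C_j^B\|g\|_{(j)}$ for every $j$.

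The main obstacle is the behaviour near $t=\pm2$, precisely where $t^2-4$ vanishes and the recursion coefficients in \eqref{e3.11}--\eqref{e3.12} blow up, so that the inductive argument on $B$ says nothing there. To get past this I would replace the differential equation by the integral representation from the proof of Proposition~\ref{loesning af diff-ligning}: for $t$ in a neighborhood of $2$ one has $f(t)=l(t^2-4)$, where
\[
l(y)=\int_0^1 v^{1/2}\,\frac{g_c(\sqrt{4+vy})}{2\sqrt{4+vy}}\6v .
\]
For $y$ ranging over a small compact interval about $0$ and $v\in[0,1]$, the quantity $4+vy$ stays bounded away from $0$, so the integrand has no singularity; differentiating under the integral sign (as already justified there), each $y$-derivative produces at most one further derivative of $g_c$, whence $|l^{(j)}(y)|\le C_j'\|g\|_{(j)}$ uniformly on that interval. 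Since $f^{(j)}$ is a polynomial in $t$ applied to $l(t^2-4),\dots,l^{(j)}(t^2-4)$, this gives $|f^{(j)}(t)|\le C_j''\|g\|_{(j)}$ on the neighborhood of $2$, and the mirror construction of Step~II of Proposition~\ref{loesning af diff-ligning} handles the neighborhood of $-2$.

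Combining the three regions gives $\|Sg\|_{(k)}=\max_{0\le j\le k}\|f^{(j)}\|_{\infty}\le C_k\|g\|_{(k)}$, which is \eqref{e3.13a} for $S$ with $m_k=k$, and then the estimate for $T$ follows as in the first paragraph. The one place demanding genuine care is the passage across $t=\pm2$: the substitution $y=t^2-4$ is what simultaneously resolves the apparent singularity of the solution formula and, via differentiation under the integral sign, keeps the constants linear in the $C_b^{\infty}$-norms of $g$.
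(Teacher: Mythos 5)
Your proof is correct, but it takes a genuinely different route from the paper. The paper's proof is soft: it invokes the closed graph theorem for Fr\'ech\'et spaces (\cite[Theorem~2.15]{ru}), noting that if $(g_n,Sg_n)\to(g,f)$ then $g_n\to g$, $Sg_n\to f$ and $(Sg_n)'\to f'$ uniformly, so one can pass to the limit in the defining equation \eqref{e3.13} and conclude $Sg=f$ by uniqueness; continuity of $T=(\,\cdot\,)'''\circ S$ then follows since differentiation is continuous. That argument is a few lines long but nonconstructive -- it yields the existence of the indices $m_k$ in \eqref{e3.13a} without identifying them. You instead verify \eqref{e3.13a} directly by a three-region decomposition of $\R$, using the recursion \eqref{e3.11}--\eqref{e3.12} away from $\pm2$ (where the coefficients are bounded and the explicit integral formulas give the base case $\|f\|_{\infty,B}\le C\|g\|_\infty$), and the substitution $y=t^2-4$ with the nonsingular integral representation $f(t)=l(t^2-4)$ near $\pm2$, exactly where the ODE coefficients degenerate. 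This is more work, and the burden of the proof is correctly located at the turning points $t=\pm2$; but it buys the quantitative estimate $\|Sg\|_{(k)}\le C_k\|g\|_{(k)}$ with \emph{no loss of derivatives} ($m_k=k$ for $S$, $m_k=k+3$ for $T$), which is strictly more information than the paper extracts and would, for instance, make the constants $C(k,0)$, $m(k,0)$ appearing later (in Lemma~\ref{analytisk} and before Proposition~\ref{eksplicit udtryk for eta}) explicit. Both proofs are complete and correct; yours trades brevity for an effective bound.
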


\begin{proof} Since differentiation is clearly a continuous mapping from
$C_b^{\infty}(\R)$ into itself, it follows immediately that $T$ is
continuous, if $S$ is.

To prove that $S$ is continuous, it suffices to show that the
graph of $S$ is closed in 
$C_b^{\infty}(\R)\times C_b^{\infty}(\R)$ equipped with the product topology
(cf.\ \cite[Theorem~2.15]{ru}). So let 
$(g_n)$ be a sequence of functions in $C_b^{\infty}(\R)$, such that 
$(g_n,Sg_n)\to(g,f)$ in $C_b^{\infty}(\R)\times C_b^{\infty}(\R)$ for
some functions $f,g$ in $C_b^{\infty}(\R)$. In particular then,
\[
g_n\to g, \quad Sg_n\to f, \qand (Sg_n)'\to f' \quad\mbox{uniformly
  on $\R$ as $n\to\infty$}.
\]
It follows that for any $t$ in $\R$,
\begin{equation*}\begin{split}
g(t)&=\lim_{n\to\infty}g_n(t)
=\lim_{n\to\infty}\Big(\frac{1}{2\pi}\int_{-2}^2g_n(s)\sqrt{4-s^2}\6s
+(t^2-4)(Sg_n)'(t)+3t(Sg_n)(t)\Big)\\[.2cm]
&=\frac{1}{2\pi}\int_{-2}^2g(s)\sqrt{4-s^2}\6s
+(t^2-4)f'(t)+3tf(t).
\end{split}\end{equation*}
Therefore, by uniqueness of solutions to \eqref{e3.13}, $Sg=f$, and the
graph of $S$ is closed.
\end{proof}

\begin{theorem}\label{eksistens af T}
Consider the spectral density $h_n$ for $\GUE(n,\frac{1}{n})$ and the
linear operator $T\colon C_b^{\infty}(\R)\to C_b^{\infty}(\R)$
introduced in Remark~\ref{def af S og T}. Then for any function $g$ in
$C_b^{\infty}(\R)$ we have that
\[
\int_{\R} g(t) h_n(t) \6t = \frac{1}{2\pi}\int_{-2}^2g(t)\sqrt{4-t^2}
\6t + \frac{1}{n^2}\int_{\R}Tg(t)\cdot h_n(t) \6t.
\]
\end{theorem}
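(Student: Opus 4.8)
The plan is to unwind the definition of the operator $T$ and to convert the combination $(4-t^2)h_n'(t)+th_n(t)$ into a third derivative of $h_n$ by means of the G\"otze--Tikhomirov equation (Proposition~\ref{diff-ligning for h}), shuttling derivatives between $f$ and $h_n$ by integration by parts. Writing $f=Sg$, the defining relation \eqref{e3.13} reads
\[
g(t)=\frac{1}{2\pi}\int_{-2}^2g(s)\sqrt{4-s^2}\6s+(t^2-4)f'(t)+3tf(t),\qquad(t\in\R).
\]
Multiplying by $h_n$ and integrating over $\R$, the constant term integrates to $\frac{1}{2\pi}\int_{-2}^2g(s)\sqrt{4-s^2}\6s$, since $h_n$ is a probability density and hence $\int_\R h_n=1$. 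It therefore remains to identify
\[
\int_\R\big[(t^2-4)f'(t)+3tf(t)\big]h_n(t)\6t
\]
with $\frac{1}{n^2}\int_\R f'''(t)h_n(t)\6t=\frac{1}{n^2}\int_\R Tg(t)h_n(t)\6t$, the last equality holding because $Tg=(Sg)'''=f'''$.

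First I would integrate by parts in $\int_\R(t^2-4)f'(t)h_n(t)\6t$, moving the derivative off $f$ onto $(t^2-4)h_n$; the boundary contribution $\big[(t^2-4)f(t)h_n(t)\big]_{-\infty}^{\infty}$ vanishes. Adding back the $3tf$ term, the first-order pieces recombine (the $-2th_n$ coming from differentiating $t^2-4$ cancels against part of $3th_n$), leaving
\[
\int_\R f(t)\big[(4-t^2)h_n'(t)+th_n(t)\big]\6t.
\]
At this point I would invoke Proposition~\ref{diff-ligning for h} in the form $(4-t^2)h_n'(t)+th_n(t)=-\tfrac{1}{n^2}h_n'''(t)$, turning the integral into $-\tfrac{1}{n^2}\int_\R f(t)h_n'''(t)\6t$. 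A final threefold integration by parts transfers the three derivatives from $h_n$ onto $f$, producing the factor $(-1)^3$ and hence $\tfrac{1}{n^2}\int_\R f'''(t)h_n(t)\6t=\tfrac{1}{n^2}\int_\R Tg(t)h_n(t)\6t$, as required.

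The only genuine work is in justifying that every boundary term produced by the four integrations by parts vanishes and that all the integrals converge absolutely. Both facts follow from the rapid decay of $h_n$ and of each of its derivatives at $\pm\infty$: by \eqref{e3.3}, $h_n$ is a finite sum of squares of Hermite functions $\varphi_k(\sqrt{n/2}\,t)$, each carrying a Gaussian factor, so $h_n,h_n',h_n'',h_n'''$ all decay faster than any polynomial. Since $g\in C_b^\infty(\R)$ forces $f=Sg\in C_b^\infty(\R)$ (Proposition~\ref{ligning II}), the function $f$ and all its derivatives are bounded, whence every expression of the form $p(t)f^{(i)}(t)h_n^{(j)}(t)$ with $p$ a polynomial tends to $0$ as $t\to\pm\infty$ and is absolutely integrable. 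This legitimizes each integration by parts and annihilates the boundary terms. The main point to keep straight is purely bookkeeping: arranging the parts so that the weight $(t^2-4)f'+3tf$ is matched exactly to the combination $(4-t^2)h_n'+th_n$ that appears in the differential equation.
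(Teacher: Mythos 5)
Your proposal is correct and follows essentially the same route as the paper: write $f=Sg$, integrate the relation \eqref{e3.13} against $h_n$, and use integration by parts together with the G\"otze--Tikhomirov equation to identify $\int_\R[(t^2-4)f'+3tf]h_n\,{\rm d}t$ with $\frac{1}{n^2}\int_\R f'''h_n\,{\rm d}t$. The only cosmetic difference is the order of operations (the paper starts from $0=\int_\R f\cdot[n^{-2}h_n'''+(4-t^2)h_n'+th_n]\,{\rm d}t$ and moves all derivatives onto $f$ at once, whereas you shuttle them back and forth), and your justification of the vanishing boundary terms via the Gaussian decay of the Hermite functions is sound.
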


\begin{proof} Consider a fixed function $g$ from $C_b^{\infty}(\R)$, and then
put $f=Sg$, where $S$ is the linear operator introduced in Remark~\ref{def
  af S og T}. Recall that
\begin{equation}
g(t)=\frac{1}{2\pi}\int_{-2}^2g(s)\sqrt{4-s^2}\6s+(t^2-4)f'(t)+3tf(t),
\qquad(t\in\R).
\label{e3.17}
\end{equation}
By Proposition~\ref{diff-ligning for h} and partial integration
it follows that
\begin{equation*}\begin{split}
0&=\int_{\R}f(t)\big[n^{-2}h_n'''(t) + (4-t^2)h_n'(t) +
th_n(t)\big] \6t \\[.2cm]
&=-n^{-2}\int_{\R}f'''(t)h_n(t) \6t -
\int_{\R}\frac{\rd}{\rd t}\big[f(t)(4-t^2)\big] h_n(t) \6t + \int_{\R}t f(t)
h_n(t) \6t \\[.2cm]
&=\int_{\R}\big[-n^{-2}f'''(t) - (4-t^2)f'(t) + 3tf(t)\big] h_n(t) \6t, 
\end{split}\end{equation*}
so that
\[
\int_{\R}\big[(t^2-4)f'(t) + 3tf(t)\big] h_n(t) \6t =
\frac{1}{n^2}\int_{\R}f'''(t)h_n(t) \6t
=\frac{1}{n^2}\int_{\R}Tg(t)\cdot h_n(t) \6t.
\]
Using \eqref{e3.17} and the fact that $h_n(t)\6t$ is a probability
measure, we conclude that
\begin{equation*}\begin{split}
\int_{\R}g(t)\cdot h_n(t)\6t
&=\frac{1}{2\pi}\int_{-2}^2g(t)\sqrt{4-t^2}\6t
+\int_{\R}\big[(t^2-4)f'(t) + 3tf(t)\big] h_n(t) \6t\\[.2cm]
&=\frac{1}{2\pi}\int_{-2}^2g(t)\sqrt{4-t^2}\6t
+\frac{1}{n^2}\int_{\R}Tg(t)\cdot h_n(t) \6t,
\end{split}\end{equation*}
which is the desired expression.
\end{proof}

As an easy corollary of Proposition~\ref{eksistens af T}, we may now
derive (in the GUE case) Ercolani's and McLaughlin's asymptotic
expansion \cite[Theorem~1.4]{em}.

\begin{corollary}\label{asymptotisk udv.}  
Let $T\colon C_b^{\infty}(\R)\to C_b^{\infty}(\R)$ be
the linear mapping introduced in Remark~\ref{def af S og T}.
Then for any $k$ in $\N$ and $g$ in $C_b^{\infty}(\R)$, we have:
\begin{equation*}\begin{split}
\int_{\R} g(t) h_n(t) \6t &=
\frac{1}{2\pi}\sum_{j=0}^{k-1}\frac{1}{n^{2j}}
\int_{-2}^2[T^jg](t)\sqrt{4-t^2} \6t + 
\frac{1}{n^{2k}}\int_{\R}[T^kg](t)\cdot h_n(t) \6t.
\\[.2cm]
&=\frac{1}{2\pi}\sum_{j=0}^{k-1}\frac{1}{n^{2j}}
\int_{-2}^2[T^jg](t)\sqrt{4-t^2} \6t + O(n^{-2k}).
\end{split}\end{equation*}
\end{corollary}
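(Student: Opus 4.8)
The plan is to iterate Theorem~\ref{eksistens af T} exactly $k$ times. That theorem gives, for any $\phi\in C_b^\infty(\R)$, the identity
\[
\int_{\R}\phi(t)h_n(t)\6t
=\frac{1}{2\pi}\int_{-2}^2\phi(t)\sqrt{4-t^2}\6t
+\frac{1}{n^2}\int_{\R}[T\phi](t)h_n(t)\6t.
\]
Since $T$ maps $C_b^\infty(\R)$ into itself (Remark~\ref{def af S og T}), the function $T\phi$ is again a legitimate input, so the whole identity may be re-applied to it. I would therefore set up a finite induction on the number of times the formula has been applied, peeling off one factor of $n^{-2}$ and one boundary integral at each stage.

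First I would establish the claimed first line as an inductive statement. The base case is just Theorem~\ref{eksistens af T} applied to $g=T^0g$ (with a single boundary term $\tfrac{1}{2\pi}\int_{-2}^2 g\sqrt{4-t^2}\6t$ and remainder $n^{-2}\int_\R Tg\,h_n\6t$). For the inductive step, suppose after $j$ applications we have
\[
\int_{\R}g(t)h_n(t)\6t
=\frac{1}{2\pi}\sum_{i=0}^{j-1}\frac{1}{n^{2i}}\int_{-2}^2[T^ig](t)\sqrt{4-t^2}\6t
+\frac{1}{n^{2j}}\int_{\R}[T^jg](t)h_n(t)\6t.
\]
Applying Theorem~\ref{eksistens af T} to the function $\phi=T^jg$ rewrites the trailing integral $\int_\R [T^jg]h_n\6t$ as
\[
\frac{1}{2\pi}\int_{-2}^2[T^jg](t)\sqrt{4-t^2}\6t
+\frac{1}{n^2}\int_{\R}[T^{j+1}g](t)h_n(t)\6t,
\]
and substituting this (multiplied by $n^{-2j}$) back in merges the new boundary term into the sum as the $i=j$ summand and promotes the remainder to order $n^{-2(j+1)}$. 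This advances the induction from $j$ to $j+1$; stopping at $j=k$ yields exactly the first displayed line of the corollary.

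For the second line I would bound the remainder term $\tfrac{1}{n^{2k}}\int_{\R}[T^kg](t)h_n(t)\6t$. Here I would use that $h_n(t)\6t$ is a probability measure, so
\[
\Big|\int_{\R}[T^kg](t)h_n(t)\6t\Big|\le \|T^kg\|_\infty,
\]
which is finite because $T^kg\in C_b^\infty(\R)$ (Remark~\ref{def af S og T} gives $T\colon C_b^\infty(\R)\to C_b^\infty(\R)$, and $k$ iterations keep us in that space). Since this bound is a constant independent of $n$, the remainder is $O(n^{-2k})$, giving the second line. I do not anticipate a genuine obstacle here: the only subtleties are bookkeeping (ensuring the index ranges and powers of $n$ align across the inductive step) and noting that continuity of $T$ (Proposition~\ref{kontinuitet af S}) is not even needed for this finite iteration—mere stability of $C_b^\infty(\R)$ under $T$, together with probability-measure normalization of $h_n$, suffices to control the remainder uniformly in $n$.
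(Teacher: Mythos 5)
Your proposal is correct and follows exactly the paper's own argument: the first identity is obtained by successive (inductive) applications of Theorem~\ref{eksistens af T}, and the $O(n^{-2k})$ bound on the remainder follows from the boundedness of $T^kg$ together with the fact that $h_n(t)\6t$ is a probability measure. No gaps.
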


\begin{proof} The first equality in the corollary follows immediately by
successive applications of Theorem~\ref{eksistens af T}.
To show the second one, it remains to establish that for any $k$ in $\N$
\[
\sup_{n\in\N}\int_{\R}\big|[T^kg](t)\big|\cdot h_n(t)\6t<\infty.
\]
But this follows immediately from the fact that $T^kg$ is bounded,
and the fact that $h_n(t)\6t$ is a probability measure for each $n$.
\end{proof}

\section{Asymptotic expansion for the Cauchy transform}
\label{exp af Cauchy transform}

For a $\GUE(n,\frac{1}{n})$ random matrix $X_n$, we consider now
the Cauchy transform given by 
\[
G_n(\lambda)=\E\big\{\tr_n\big[(\lambda\unit_n-X_n)^{-1}\big]\big\}
=\int_{\R}\frac{1}{\lambda-t}h_n(t)\6t,
\qquad(\lambda\in\C\setminus\R).
\]
Setting
\[
g_\lambda(t)=g(\lambda,t)=\frac{1}{\lambda-t}, \qquad(t\in\R, \
\lambda\in\C\setminus\R),
\]
we have by the usual theorem on differentiation under the integral
sign (for analytical functions) that $G_n$ is analytical on
$\C\setminus\R$ with derivatives
\begin{equation}
\frac{\rd^k}{\rd \lambda^k}G_n(\lambda)
=\int_{\R}\frac{(-1)^kk!}{(\lambda-t)^{k+1}}h_n(t)\6t
=(-1)^k\int_{\R}\Big(\frac{\rd^k}{\rd t^k}g_\lambda(t)\Big)h_n(t)\6t
\label{e3.15}
\end{equation}
for any $k$ in $\N$ and $\lambda$ in $\C\setminus\R$.

\begin{lemma}\label{diff ligning for G-n}
The Cauchy transform $G_n$ of a $\GUE(n,\frac{1}{n})$ random matrix
$X_n$ satisfies the following differential equation:
\begin{equation}
n^{-2}\frac{\rd^3}{\rd\lambda^3}G_n(\lambda)
+(4-\lambda^2)\frac{\rd}{\6\lambda}G_n(\lambda)
+\lambda G_n(\lambda)=2,
\label{e3.14}
\end{equation}
for all $\lambda$ in $\C\setminus\R$.
\end{lemma}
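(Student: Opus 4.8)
The plan is to derive the differential equation \eqref{e3.14} for $G_n$ directly from the G\"otze--Tikhomirov differential equation for $h_n$ (Proposition~\ref{diff-ligning for h}), by multiplying it against the resolvent kernel $g_\lambda(t)=\frac{1}{\lambda-t}$ and integrating over $\R$. The key observation is that the derivatives of $G_n$ in $\lambda$ are, up to sign, the integrals of the $t$-derivatives of $g_\lambda$ against $h_n$; this is precisely formula \eqref{e3.15}. So the strategy is to integrate $\int_\R[n^{-2}h_n'''(t)+(4-t^2)h_n'(t)+th_n(t)]g_\lambda(t)\6t=0$ by parts, moving all $t$-derivatives off $h_n$ and onto $g_\lambda$, and then to recognize each resulting term as a $\lambda$-derivative of $G_n$.

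Concretely, I would proceed as follows. First, integrate by parts three times in the term $\int_\R h_n'''(t)g_\lambda(t)\6t$ to obtain $-\int_\R h_n(t)g_\lambda'''(t)\6t$, and once in $\int_\R (4-t^2)h_n'(t)g_\lambda(t)\6t$ to obtain $-\int_\R h_n(t)\frac{\rd}{\rd t}[(4-t^2)g_\lambda(t)]\6t$. The boundary terms all vanish because $h_n$ and its derivatives decay rapidly (faster than any polynomial) at $\pm\infty$, while $g_\lambda$ and its derivatives grow only polynomially for fixed non-real $\lambda$. After collecting terms, the integrand becomes $h_n(t)$ times a differential expression in $t$ applied to $g_\lambda$; using $\frac{\rd}{\rd t}g_\lambda=g_\lambda'$ etc.\ and the explicit form $g_\lambda(t)=(\lambda-t)^{-1}$, one computes $g_\lambda'(t)=(\lambda-t)^{-2}$, $g_\lambda''(t)=2(\lambda-t)^{-3}$, $g_\lambda'''(t)=6(\lambda-t)^{-4}$. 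The crucial bookkeeping step is then to translate the $t$-derivatives back into $\lambda$-derivatives of $G_n$ via \eqref{e3.15}: since $\frac{\rd^k}{\rd t^k}g_\lambda(t)=(-1)^k\frac{\rd^k}{\rd\lambda^k}g_\lambda(t)\cdot(-1)^k$ relationships hold, one reads off $\int_\R g_\lambda'''(t)h_n(t)\6t=-\frac{\rd^3}{\rd\lambda^3}G_n(\lambda)$ and similarly for the lower-order terms, being careful with the signs.

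The term requiring the most care is the middle one, $-\int_\R h_n(t)\frac{\rd}{\rd t}[(4-t^2)g_\lambda(t)]\6t$, because expanding $\frac{\rd}{\rd t}[(4-t^2)g_\lambda]=(4-t^2)g_\lambda'-2tg_\lambda$ produces a term with an explicit polynomial factor $(4-t^2)$ multiplying $g_\lambda'$, and another with a factor $t$. These polynomial-in-$t$ factors must be absorbed by writing $t=\lambda-(\lambda-t)$ so as to re-express $t\,g_\lambda(t)$ and $(4-t^2)g_\lambda'(t)$ in terms of the functions $g_\lambda^{(k)}$ themselves; for instance $t\,g_\lambda(t)=\lambda g_\lambda(t)-1$ and $(4-t^2)g_\lambda'(t)$ decomposes analogously. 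This algebraic rearrangement is what produces the constant term $2$ on the right-hand side of \eqref{e3.14} (the $\int_\R h_n(t)\6t=1$ normalization of $h_n$ as a probability density feeds into these constants) and the coefficients $4-\lambda^2$ and $\lambda$ in front of the $\lambda$-derivatives.

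I expect the \textbf{main obstacle} to be precisely this last algebraic identification: keeping the signs straight across the three integrations by parts and correctly converting every $t$-polynomial times $g_\lambda^{(k)}(t)$ into a clean combination of $G_n^{(j)}(\lambda)$ plus a scalar constant. A secondary, more routine point is the rigorous justification that all boundary terms vanish, which follows from the Hermite-function decay of $h_n$ together with the polynomial growth bounds on $g_\lambda^{(k)}$ away from $\lambda$; I would note this briefly rather than belabor it. Once the dictionary between $t$-derivatives and $\lambda$-derivatives is pinned down, \eqref{e3.14} drops out by matching coefficients.
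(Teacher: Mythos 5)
Your proposal is correct and follows essentially the same route as the paper: multiply the G\"otze--Tikhomirov equation for $h_n$ by $g_\lambda$, integrate by parts to land all $t$-derivatives on $g_\lambda$, and then use the decompositions $3tg_\lambda(t)=\frac{3\lambda}{\lambda-t}-3$ and $(4-t^2)g_\lambda'(t)=\frac{4-\lambda^2}{(\lambda-t)^2}+\frac{2\lambda}{\lambda-t}-1$ together with \eqref{e3.15} and $\int_\R h_n(t)\6t=1$ to read off \eqref{e3.14}. The algebraic bookkeeping you flag as the main obstacle is exactly the two identities just quoted, and it works out as you anticipate.
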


\begin{proof}
From Proposition~\ref{diff-ligning for h} and partial integration we
obtain for fixed $\lambda$ in $\C\setminus\R$ that
\begin{equation}
\begin{split}
0&=\int_{\R}g_\lambda(t)
\big[n^{-2}h_n'''(t) + (4-t^2)h_n'(t) + th_n(t)\big]\6t
\\[.2cm]
&=\int_{\R}\big[-n^{-2}g_\lambda'''(t)
-(4-t^2)g_\lambda'(t)+3tg_\lambda(t)\big]h_n(t)\6t.
\end{split}
\label{e3.16}
\end{equation}
Note here that
\[
(4-t^2)g_\lambda'(t)=\frac{4-t^2}{(\lambda-t)^2}
=\frac{4-\lambda^2}{(\lambda-t)^2}+\frac{2\lambda}{\lambda-t}-1,
\]
and that
\[
3tg_\lambda(t)=\frac{3t}{\lambda-t}=\frac{3\lambda}{\lambda-t}-3.
\]
Inserting this into \eqref{e3.16} and using \eqref{e3.15} and the fact
that $h_n$ is a probability density, we find that
\[
0=n^{-2}\frac{\rd^3}{\rd \lambda^3}G_n(\lambda)+
(4-\lambda^2)\frac{\rd}{\rd \lambda}G_n(\lambda)+\lambda G_n(\lambda)-2,
\]
as desired.
\end{proof}

For each fixed $\lambda$ in $\C\setminus\R$, we apply next
Corollary~\ref{asymptotisk udv.} to the function $g_\lambda$ and
obtain for any $k$ in $\N_0$ the expansion:
\begin{equation}
G_n(\lambda)=\int_{\R}\frac{1}{\lambda-t}h_n(t)\6t
=\eta_0(\lambda)+\frac{\eta_1(\lambda)}{n^2}+\frac{\eta_2(\lambda)}{n^4}+\cdots
+\frac{\eta_k(\lambda)}{n^{2k}}+O(n^{-2k-2}),
\label{e3.18}
\end{equation}
where
$\eta_j(\lambda)=\frac{1}{2\pi}\int_{-2}^2[T^jg_\lambda](t)\sqrt{4-t^2}\6t$
for all $j$. To determine these coefficients we shall insert the
expansion \eqref{e3.18} into the differential equation \eqref{e3.14}
in order to obtain differential equations for the $\eta_j$'s. To make
this rigorous, we need first to establish analyticity of the
$\eta_j$'s as functions of $\lambda$.

\begin{lemma}\label{analytisk}

\begin{itemize}

\item[(i)] For any $k$ in $\N_0$ the mapping $\lambda\mapsto
  T^kg_\lambda$ is analytical as a mapping from $\C\setminus\R$ into
  the Fr\'ech\'et space $C_b^{\infty}(\R)$, and
\[
\frac{\rd^j}{\rd\lambda^j}T^kg_\lambda
=T^k\Big(\frac{\partial^j}{\partial\lambda^j}g(\lambda,\cdot)\Big) 
\quad\mbox{for any $j$ in $\N$.}
\]

\item[(ii)] For any $k,n$ in $\N$, consider the mappings
  $\eta_k,R_{k,n}\colon\C\setminus\R\to\C$ given by
\begin{align}
\eta_k(\lambda)&=\int_{-2}^2[T^kg_\lambda](s)\sqrt{4-s^2}\6s,
\qquad(\lambda\in\C\setminus\R)
\label{e3.18a}\\[.2cm]
R_{k,n}(\lambda)&=\int_{\R}[T^{k+1}g_\lambda](s)h_n(s)\6s,
\qquad(\lambda\in\C\setminus\R).\label{e3.18d}
\end{align}
These mappings are analytical on $\C\setminus\R$ with derivatives:
\begin{align*}
\frac{\rd^j}{\rd\lambda^j}\eta_k(\lambda)
&=\int_{-2}^2\big[T^k\big(\tfrac{\partial^j}{\partial\lambda^j}
g(\lambda,\cdot)\big)\big](s)\sqrt{4-s^2}\6s,
\qquad(\lambda\in\C\setminus\R, \ j\in\N)\\[.2cm]
\frac{\rd^j}{\rd\lambda^j}R_{k,n}(\lambda)&=\int_{\R}
\big[T^{k+1}\big(\tfrac{\partial^j}{\partial\lambda^j}
g(\lambda,\cdot)\big)\big](s)h_n(s)\6s,
\qquad(\lambda\in\C\setminus\R, \ j\in\N).
\end{align*}
\end{itemize}
\end{lemma}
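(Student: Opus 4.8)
The plan is to reduce both parts to a single analyticity statement about $\lambda\mapsto g_\lambda$, exploiting the general principle that analyticity of a $C_b^\infty(\R)$-valued map is preserved under post-composition with continuous linear operators and continuous linear functionals. Here I call a map $F\colon\C\setminus\R\to C_b^\infty(\R)$ analytic if for each $\lambda$ the difference quotient $h^{-1}(F(\lambda+h)-F(\lambda))$ converges in $C_b^\infty(\R)$ (i.e.\ in every seminorm $\|\cdot\|_{(k)}$) as $h\to0$, the limit being $\frac{\rd}{\rd\lambda}F(\lambda)$. The two facts I would record first are immediate from this definition: if $F$ is analytic and $L\colon C_b^\infty(\R)\to C_b^\infty(\R)$ is continuous and linear, then $L\circ F$ is analytic with $\frac{\rd}{\rd\lambda}(L\circ F)=L\circ\frac{\rd}{\rd\lambda}F$, since $L$ commutes with the difference quotient and is continuous; the identical argument applies when $L$ is replaced by a continuous linear functional.

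The heart of the proof is to show that $\lambda\mapsto g_\lambda$ is infinitely differentiable with $\frac{\rd^j}{\rd\lambda^j}g_\lambda=\frac{\partial^j}{\partial\lambda^j}g(\lambda,\cdot)$. Write $g^{[j]}_\lambda(t)=\frac{\partial^j}{\partial\lambda^j}g(\lambda,t)=(-1)^jj!(\lambda-t)^{-(j+1)}$; since $|\lambda-t|\ge|\im\lambda|>0$ for all real $t$, each $g^{[j]}_\lambda$ lies in $C_b^\infty(\R)$. I claim each $\lambda\mapsto g^{[j]}_\lambda$ is complex differentiable with derivative $g^{[j+1]}_\lambda$. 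Because $t$-differentiation commutes with the $\lambda$-difference quotient and sends $g^{[j]}_\mu$ to a scalar multiple of $(\mu-t)^{-p}$ with $p=j+m+1$, verifying this in the seminorm $\|\cdot\|_{(k)}$ amounts to the estimate
\[
\sup_{t\in\R}\Big|\frac{(\lambda+h-t)^{-p}-(\lambda-t)^{-p}}{h}+p(\lambda-t)^{-p-1}\Big|\longrightarrow0
\qquad(h\to0)
\]
for each exponent $p\in\N$. Setting $z=\lambda-t$ (so $|z|\ge\delta:=|\im\lambda|$) and writing the numerator as $\int_0^h(-p)(z+s)^{-p-1}\6s$, a second application of the fundamental theorem of calculus bounds the displayed quantity by $p(p+1)(\delta/2)^{-p-2}|h|$ once $|h|\le\delta/2$, uniformly in $t$. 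This is the main obstacle, and the one subtlety is to keep the bound locally uniform in $\lambda$ (it is controlled by $\delta=|\im\lambda|$) so that analyticity holds throughout $\C\setminus\R$. Induction on $j$ then gives $\frac{\rd^j}{\rd\lambda^j}g_\lambda=g^{[j]}_\lambda=\frac{\partial^j}{\partial\lambda^j}g(\lambda,\cdot)$; in particular $\lambda\mapsto g_\lambda$ is analytic.

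With this in hand, part (i) follows by applying the first general fact to the continuous linear operator $T$ (continuity being Proposition~\ref{kontinuitet af S}) and iterating $k$ times: $\lambda\mapsto T^kg_\lambda$ is analytic and $\frac{\rd^j}{\rd\lambda^j}T^kg_\lambda=T^k g^{[j]}_\lambda=T^k\big(\frac{\partial^j}{\partial\lambda^j}g(\lambda,\cdot)\big)$. For part (ii) I would apply the functional version to $\Phi(f)=\int_{-2}^2f(s)\sqrt{4-s^2}\6s$ and $\Psi_n(f)=\int_\R f(s)h_n(s)\6s$; these satisfy $|\Phi(f)|\le 2\pi\|f\|_\infty$ and $|\Psi_n(f)|\le\|f\|_\infty=\|f\|_{(0)}$ (using $\int_{-2}^2\sqrt{4-s^2}\6s=2\pi$ and that $h_n$ is a probability density), hence are continuous. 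Since $\eta_k=\Phi\circ(\lambda\mapsto T^kg_\lambda)$ and $R_{k,n}=\Psi_n\circ(\lambda\mapsto T^{k+1}g_\lambda)$, both are analytic on $\C\setminus\R$, and the stated derivative formulas drop out by commuting $\Phi$ and $\Psi_n$ past $\frac{\rd^j}{\rd\lambda^j}$ and invoking the formula from part (i).
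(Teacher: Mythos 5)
Your argument is correct, and part (i) follows the paper's proof exactly: establish that $\lambda\mapsto g_\lambda$ is analytic as a map into the Fr\'ech\'et space $C_b^{\infty}(\R)$ with $\lambda$-derivatives $\tfrac{\partial^j}{\partial\lambda^j}g(\lambda,\cdot)$, then compose with the continuous linear operator $T^k$. The paper dismisses the key uniform estimate (its formula for the convergence of the difference quotients in every seminorm) with the phrase ``by standard methods''; you supply it explicitly via the two applications of the fundamental theorem of calculus, with the bound $p(p+1)(\delta/2)^{-p-2}|h|$ controlled by $\delta=|{\sf Im}\,\lambda|$ --- that is exactly the content being waved at, and your version is complete. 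Where you genuinely diverge is part (ii): the paper first fixes $s$, deduces pointwise analyticity of $\lambda\mapsto[T^kg_\lambda](s)$, proves local uniform boundedness over closed balls $B\subseteq\C\setminus\R$ using the continuity estimate from Lemma~\ref{kont i Frechet rum}, and then invokes the theorem on differentiation under the integral sign for a general finite Borel measure. You instead observe that $f\mapsto\int_{-2}^2f(s)\sqrt{4-s^2}\6s$ and $f\mapsto\int_{\R}f(s)h_n(s)\6s$ are continuous linear functionals on $C_b^{\infty}(\R)$ (bounded by $2\pi\|f\|_{\infty}$ and $\|f\|_{\infty}$ respectively) and compose them with the already-established vector-valued analytic map. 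Your route is shorter and avoids the uniform-integrability bookkeeping entirely, at the cost of being tied to these two specific functionals; the paper's version yields the statement for an arbitrary finite Borel measure in one stroke, which costs nothing extra here since only these two measures are ever used. Both are complete proofs.
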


\begin{proof} (i) \ By standard methods it follows that for any $\lambda$ in
$\C\setminus\R$ and $l,j$ in $\N_0$, 
\begin{equation}
\lim_{h\to0}\Big(\sup_{t\in\R}\Big|\frac{1}{h}
\Big(\frac{\partial^l}{\partial t^l}\frac{\partial^j}{\partial\lambda^j}
g(\lambda+h,t)
-\frac{\partial^l}{\partial t^l}\frac{\partial^j}{\partial\lambda^j}
g(\lambda,t)\Big)
-\frac{\partial^{l}}{\partial t^{l}}
\frac{\partial^{j+1}}{\partial\lambda^{j+1}}g(\lambda,t)\Big|\Big)
=0.
\label{e3.18c}
\end{equation}
When $j=0$, formula \eqref{e3.18c} shows that the mapping
$F\colon\C\setminus\R\to C_b^{\infty}(\R)$ given by
\[
F(\lambda)=g(\lambda,\cdot), \quad(\lambda\in\C\setminus\R),
\]
is analytical on $\C\setminus\R$ with derivative
$\frac{\rd}{\rd\lambda}F(\lambda)
=\frac{\partial}{\partial\lambda}g(\lambda,\cdot)$
(cf.\ \cite[Definition~3.30]{ru}). Using then \eqref{e3.18c} and
induction on $j$, it follows that moreover
\[
\frac{\rd^j}{\rd\lambda^j}F(\lambda)
=\frac{\partial^j}{\partial\lambda^j}g(\lambda,\cdot), 
\qquad(\lambda\in\C\setminus\R)
\]
for all $j$ in $\N$.
For each $k$ in $\N$ the mapping $T^k\colon C_b^{\infty}(\R)\to
C_b^{\infty}(\R)$ is linear and continuous (cf.\
Proposition~\ref{kontinuitet af S}), and it follows therefore
immediately that the composed mapping $T^k\circ F\colon\C\setminus\R\to
C_b^{\infty}(\R)$ is again analytical on $\C\setminus\R$ with
derivatives 
\[
\frac{\rd^j}{\rd\lambda^j}T^kg(\lambda,\cdot)
=\frac{\rd^j}{\rd\lambda^j}T^k\circ F(\lambda)
=T^k\Big(\frac{\partial^j}{\partial\lambda^j}g(\lambda,\cdot)\Big)
\quad\mbox{for all $j$ in $\N$.}
\]
This establishes (i).

(ii) \ As an immediate consequence of (i), for each fixed $s$ in $\R$
the mapping
$\lambda\mapsto [T^kg(\lambda,\cdot)](s)$ is analytical with derivatives
\[
\frac{\rd^j}{\rd\lambda^j}[T^kg(\lambda,\cdot)](s)=
\Big[T^k\frac{\partial^j}{\partial\lambda^j}
g(\lambda,\cdot)\Big](s), \qquad(j\in\N).
\]
Note here that by Lemma~\ref{kont i Frechet rum}
\[
\Big\|T^k\frac{\partial^j}{\partial\lambda^j}g(\lambda,\cdot)\Big\|_{\infty}\le
C(k,0)\Big\|\frac{\partial^j}{\partial\lambda^j}g(\lambda,\cdot)\Big\|_{(m(k,0))}
\]
for suitable constants $C(k,0)$ in $(0,\infty)$ and $m(k,0)$ in 
$\N$. Hence, for any closed ball $B$ inside $\C\setminus\R$ and any
$j$ in $\N$ we have that
\[
\sup_{\lambda\in B}\Big\|T^k\Big(\frac{\partial^j}{\partial\lambda^j}
g(\lambda,\cdot)\Big)\Big\|_{\infty}
\le C(k,0)\sup_{\lambda\in B}
\Big\|\frac{\partial^j}{\partial\lambda^j}
g(\lambda,\cdot)\Big\|_{(m(k,0))}<\infty.
\]
It follows now by application of the usual theorem on differentiation
under the integral sign, that for any finite
Borel-measure $\mu$ on $\R$, the mapping
$\lambda\mapsto\int_{\R}[T^kg(\lambda,\cdot)](s)\,\mu(\rd s)$ is
analytical on $\C\setminus\R$ with derivatives
\[
\frac{\rd^j}{\rd\lambda^j}\int_{\R}[T^kg(\lambda,\cdot)](s)\,\mu(\rd s)
=\int_{\R}\frac{\rd^j}{\rd\lambda^j}[T^kg(\lambda,\cdot)](s)\,\mu(\rd s)
=\int_{\R}\big[T^k\big(\tfrac{\partial^j}{\partial\lambda^j}
g(\lambda,\cdot)\big)\big](s)\,\mu(\rd s).
\]
In particular this implies (ii).
\end{proof}

\begin{lemma}\label{diff lign for eta-j} 
Let $G_n$ denote the Cauchy-transform of $h_n(x)\6x$,
  and consider for each $\lambda$ in $\C\setminus\R$ and $k$ in
  $\N_0$ the asymptotic expansion:
\begin{equation}
G_n(\lambda)
=\eta_0(\lambda)+\frac{\eta_1(\lambda)}{n^2}+\frac{\eta_2(\lambda)}{n^4}+\cdots
+\frac{\eta_k(\lambda)}{n^{2k}}+O(n^{-2k-2})
\label{e3.19}
\end{equation}
given by Corollary~\ref{asymptotisk udv.}. Then
the coefficients $\eta_j(\lambda)$ are analytical as functions of
$\lambda$, and they satisfy the following recursive system of
differential equations:
\begin{align}
(4-\lambda^2)\eta_0'(\lambda)+\lambda\eta_0(\lambda)&= 2\notag\\[.2cm]
(\lambda^2-4)\eta_j'(\lambda)-\lambda
\eta_j(\lambda)&=\eta_{j-1}'''(\lambda)\label{e3.22},
\qquad (j\in\N).
\end{align}
\end{lemma}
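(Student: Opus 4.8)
The statement to prove (Lemma~\ref{diff lign for eta-j}) has two parts: first, that each coefficient $\eta_j(\lambda)$ is analytical on $\C\setminus\R$; second, that the $\eta_j$ satisfy the stated recursive system of differential equations. The analyticity is already essentially in hand: the coefficient $\eta_j(\lambda)$ is exactly the mapping $\frac{1}{2\pi}\eta_k(\lambda)$ studied in Lemma~\ref{analytisk}(ii) (up to the normalizing constant), so its analyticity and the interchange of $\frac{\rd}{\rd\lambda}$ with the integral over $[-2,2]$ follow directly from that lemma. The real content is the recursion, and the plan is to obtain it by substituting the asymptotic expansion \eqref{e3.19} into the differential equation \eqref{e3.14} from Lemma~\ref{diff ligning for G-n} and matching coefficients of like powers of $n^{-2}$.

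\textbf{Deriving the recursion.}
The plan is as follows. Fix $\lambda$ in $\C\setminus\R$. By Lemma~\ref{analytisk}(ii), each $\eta_j$ is analytic, so the derivatives $\eta_j'$ and $\eta_j'''$ appearing in the claimed recursion make sense. I would first argue that one may differentiate the expansion \eqref{e3.19} term by term in $\lambda$: since $\eta_j(\lambda)$ and the remainder $R_{j,n}(\lambda)$ of \eqref{e3.18d} are analytic in $\lambda$ with derivatives given by integrating the $\lambda$-derivatives of $T^kg_\lambda$ against the relevant measure, and since those derivatives are uniformly bounded on compact subsets of $\C\setminus\R$ (the bound coming from Lemma~\ref{kont i Frechet rum} as used in the proof of Lemma~\ref{analytisk}), the remainder terms $O(n^{-2k-2})$ survive differentiation with the same order. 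Concretely, I would write, for each fixed $k$,
\[
G_n(\lambda)=\sum_{j=0}^{k}\frac{\eta_j(\lambda)}{n^{2j}}+\frac{1}{n^{2k+2}}R_{k,n}(\lambda),
\]
and note that $\frac{\rd^m}{\rd\lambda^m}G_n(\lambda)=\sum_{j=0}^k n^{-2j}\eta_j^{(m)}(\lambda)+n^{-2k-2}R_{k,n}^{(m)}(\lambda)$ with $R_{k,n}^{(m)}$ bounded uniformly in $n$ on compacta, again by Lemma~\ref{analytisk} and boundedness of $T^{k+1}(\partial_\lambda^m g_\lambda)$ integrated against the probability density $h_n$.

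\textbf{Matching powers.}
Now substitute this into \eqref{e3.14}. The term $n^{-2}\frac{\rd^3}{\rd\lambda^3}G_n$ contributes $\sum_{j\ge 0} n^{-2(j+1)}\eta_j'''$, i.e.\ it shifts the index; the terms $(4-\lambda^2)G_n'$ and $\lambda G_n$ contribute $\sum_{j\ge 0} n^{-2j}\big[(4-\lambda^2)\eta_j'+\lambda\eta_j\big]$; and the right-hand side is the constant $2$, contributing only at order $n^0$. Collecting the coefficient of $n^{0}$ gives $(4-\lambda^2)\eta_0'(\lambda)+\lambda\eta_0(\lambda)=2$, the first equation. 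Collecting the coefficient of $n^{-2j}$ for $j\ge 1$ gives $\eta_{j-1}'''(\lambda)+(4-\lambda^2)\eta_j'(\lambda)+\lambda\eta_j(\lambda)=0$, which rearranges to $(\lambda^2-4)\eta_j'(\lambda)-\lambda\eta_j(\lambda)=\eta_{j-1}'''(\lambda)$, the claimed recursion \eqref{e3.22}. To make the coefficient comparison rigorous for each fixed $j$, I would apply the expansion with $k$ chosen $\ge j$, multiply the identity (equation minus its expansion) by $n^{2j}$, and let $n\to\infty$: all higher-order terms vanish in the limit and all remainder terms are $O(n^{-2})$, isolating the coefficient of $n^{-2j}$. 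I expect the main obstacle to be precisely this justification that the $O$-remainders may be differentiated in $\lambda$ without loss of order and that coefficient-matching is legitimate; this is where Lemma~\ref{analytisk}(ii) (analyticity of both $\eta_k$ and $R_{k,n}$, with derivatives obtained by differentiating under the integral sign) does the essential work, so the argument is a careful bookkeeping of these uniform bounds rather than a new estimate.
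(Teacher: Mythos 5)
Your proposal is correct and follows essentially the same route as the paper: analyticity of the $\eta_j$ and of the remainders $R_{k,n}$ comes from Lemma~\ref{analytisk}, the uniform-in-$n$ bound on $\frac{\rd^l}{\rd\lambda^l}R_{k,n}(\lambda)$ (via $\|T^{k+1}(\partial_\lambda^l g_\lambda)\|_\infty$ and $h_n$ being a probability density) justifies substituting the expansion into \eqref{e3.14}, and the equations are then peeled off inductively by letting $n\to\infty$ and multiplying by $n^2$ at each stage, exactly as in the paper's proof.
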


\begin{proof} For each $j$ in $\N_0$ the coefficient $\eta_j(\lambda)$ is
given by \eqref{e3.18a} (cf.\ Corollary~\ref{asymptotisk udv.}), and
hence Lemma~\ref{analytisk} asserts that $\eta_j$ is analytical on
$\C\setminus\R$. Recall also from Corollary~\ref{asymptotisk udv.} that
the $O(n^{-2k-2})$ term in \eqref{e3.19} has the form
$n^{-2k-2}R_{k,n}(\lambda)$, where $R_{k,n}(\lambda)$ is given by
\eqref{e3.18d} and is again an analytical function on $\C\setminus\R$
according to Lemma~\ref{analytisk}. Inserting now \eqref{e3.19} into the
differential equation \eqref{e3.14}, we obtain for $\lambda$ in
$\C\setminus\R$ that   
\begin{equation} 
\begin{split}
2&=n^{-2}G_n'''(\lambda)+(4-\lambda^2)G_n'(\lambda)+\lambda
G_n(\lambda)\\[.2cm]
&=n^{-2}\Big(\sum_{j=0}^kn^{-2j}\eta_j'''(\lambda)
+n^{-2k-2}R_{k,n}'''(\lambda)\Big)
+(4-\lambda^2)\Big(\sum_{j=0}^kn^{-2j}\eta_j'(\lambda)
+n^{-2k-2}R_{k,n}'(\lambda)\Big)\\
&\ \phantom{=n^{-2}\Big(\sum_{j=0}^kn^{-2j}\eta_j'''(\lambda)
+n^{-2k-2}R_{k,n}'''(\lambda)\Big)}
+\lambda\Big(\sum_{j=0}^kn^{-2j}\eta_j(\lambda)
+n^{-2k-2}R_{k,n}(\lambda)\Big)
\\[.2cm]
&=\big[(4-\lambda^2)\eta_0'(\lambda)+\lambda\eta_0(\lambda)\big]
+\sum_{j=1}^kn^{-2j}\big[\eta_{j-1}'''(\lambda)
+(4-\lambda^2)\eta_j'(\lambda)+\lambda\eta_j(\lambda)\big]\\
&\quad\phantom{\big[(4-\lambda^2)\eta_0'(\lambda)\big]}
+n^{-2k-2}\big[\eta_k'''(\lambda)+(4-\lambda^2)R_{k,n}'(\lambda)
+\lambda R_{k,n}(\lambda)\big]
+n^{-2k-4}R_{k,n}'''(\lambda).
\end{split}
\label{e3.20}
\end{equation}
Using Lemma~\ref{analytisk}, we note here that for fixed $k$ and
$\lambda$ we have for any $l$ in $\N_0$ that
\[
\sup_{n\in\N}\big|\tfrac{\rd^l}{\rd\lambda^l}R_{k,n}(\lambda)\big|=
\sup_{n\in\N}\Big|\int_{\R}
\big[T^{k+1}\big(\tfrac{\partial^l}{\partial\lambda^l}
g(\lambda,\cdot)\big)\big](s)h_n(s)\6s\Big|
\le\big\|T^{k+1}\big(\tfrac{\partial^l}{\partial\lambda^l}
g(\lambda,\cdot)\big)\big\|_{\infty}
<\infty,
\]
since $T^{k+1}\big(\tfrac{\partial^l}{\partial\lambda^l}g(\lambda,\cdot)\big)
\in C_b^{\infty}(\R)$. Thus, letting $n\to\infty$ in \eqref{e3.20}, it
follows that
\[
(4-\lambda^2)\eta_0'(\lambda)+\lambda\eta_0(\lambda)=2,
\quad(\lambda\in\C\setminus\R).
\]
and subsequently by multiplication with $n^2$ that
\begin{equation}
\begin{split}
0&=\sum_{j=1}^kn^{-2j+2}\big[\eta_{j-1}'''(\lambda)
+(4-\lambda^2)\eta_j'(\lambda)+\lambda\eta_j(\lambda)\big]\\
&\phantom{\big[(4-\lambda^2)\eta_0'(\lambda)\big]}
+n^{-2k}\big[\eta_k'''(\lambda)+(4-\lambda^2)R_{k,n}'(\lambda)
+\lambda R_{k,n}(\lambda)\big]
+n^{-2k-2}R_{k,n}'''(\lambda).
\end{split}
\label{e3.21}
\end{equation}
Letting then $n\to\infty$ in \eqref{e3.21}, we find similarly
(assuming $k\ge1$) that
\[
\eta_{0}'''(\lambda)+(4-\lambda^2)\eta_1'(\lambda)+\lambda\eta_1(\lambda)=0,
\]
and subsequently that
\begin{equation*}\begin{split}
0&=\sum_{j=2}^kn^{-2j+4}\big[\eta_{j-1}'''(\lambda)
+(4-\lambda^2)\eta_j'(\lambda)+\lambda\eta_j(\lambda)\big]\\
&\phantom{\big[(4-\lambda^2)\eta_0'(\lambda)\big]}
+n^{-2k+2}\big[\eta_k'''(\lambda)+(4-\lambda^2)R_{k,n}'(\lambda)
+\lambda R_{k,n}(\lambda)\big]
+n^{-2k}R_{k,n}'''(\lambda).
\end{split}\end{equation*}
Continuing like this (induction), we obtain \eqref{e3.22} for any $j$
in $\{1,2,\ldots,k\}$. Since $k$ can be chosen arbitrarily in $\N$, we
obtain the desired conclusion.
\end{proof}

For any odd integer $k$ we shall in the following use the
conventions:
\begin{equation}
(\lambda^2-4)^{1/2}=\lambda\sqrt{1-\tfrac{4}{\lambda^2}}, \qand
(\lambda^2-4)^{k/2}=\big((\lambda^2-4)^{1/2}\big)^k
\label{e3.21a}
\end{equation}
for any $\lambda$ in the region
\[
\Omega:=\C\setminus[-2,2],
\]
and where $\sqrt{\cdot}$ denotes the usual main branch of the square root on
$\C\setminus(-\infty,0]$. We note in particular that
\begin{equation}
\big|(\lambda^2-4)^{1/2}\big|\to\infty, \quad\mbox{as $|\lambda|\to\infty$}.
\label{e3.29}
\end{equation}

\begin{lemma}\label{loesn af diff lign II}
For any $r$ in $\Z\setminus\{-3,-4\}$ the complete solution to the
differential equation:
\begin{equation}
(\lambda^2-4)\frac{\rd}{d\lambda}f(\lambda)-\lambda f(\lambda)
=\frac{\rd^3}{\rd\lambda^3}(\lambda^2-4)^{-r-1/2},
\qquad(\lambda\in\Omega)
\label{e3.23}
\end{equation}
is given by
\begin{equation}
f(\lambda)=\frac{(r+1)(2r+1)(2r+3)}{(r+3)(\lambda^2-4)^{r+5/2}}
+\frac{2(2r+1)(2r+3)(2r+5)}{(r+4)(\lambda^2-4)^{r+7/2}}
+C(\lambda^2-4)^{1/2},
\label{e3.24}
\end{equation}
for all $\lambda$ in $\Omega$, and where $C$ is an arbitrary
complex constant. 
\end{lemma}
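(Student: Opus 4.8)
The plan is to treat \eqref{e3.23} as an inhomogeneous first-order linear ODE on the connected open set $\Omega$, and to exploit the fact that the operator on its left-hand side acts very simply on powers of $\lambda^2-4$. Write $L[f]:=(\lambda^2-4)f'(\lambda)-\lambda f(\lambda)$. First I would dispose of the homogeneous equation $L[f]=0$: separating variables gives $f'/f=\lambda/(\lambda^2-4)=\tfrac12\tfrac{\rd}{\rd\lambda}\log(\lambda^2-4)$, so every local solution is a multiple of $(\lambda^2-4)^{1/2}$. With the branch fixed in \eqref{e3.21a} this function is single-valued and analytic on all of $\Omega$ (a loop encircling $[-2,2]$ winds twice around $0$ under $\lambda\mapsto\lambda^2-4$, so the half-power returns to its value); and since $\Omega$ is connected, dividing an arbitrary homogeneous solution by $(\lambda^2-4)^{1/2}$ produces an analytic function with vanishing derivative, hence a constant. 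Thus the homogeneous solutions are exactly the $C(\lambda^2-4)^{1/2}$, and it suffices to exhibit one particular solution of \eqref{e3.23} of the stated shape; the term $C(\lambda^2-4)^{1/2}$ then accounts for the entire solution space.

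The key computational observation is that, directly from $\tfrac{\rd}{\rd\lambda}(\lambda^2-4)^s=2s\lambda(\lambda^2-4)^{s-1}$,
\[
L\big[(\lambda^2-4)^s\big]=(2s-1)\,\lambda\,(\lambda^2-4)^s
\]
for every half-integer exponent $s$ (so that $(\lambda^2-4)^s$ is defined via \eqref{e3.21a}). Next I would compute the right-hand side of \eqref{e3.23} explicitly. Setting $p=-r-\tfrac12$ and repeatedly using $\lambda^2=(\lambda^2-4)+4$ to eliminate the spurious powers of $\lambda^2$, the third derivative $\tfrac{\rd^3}{\rd\lambda^3}(\lambda^2-4)^{p}$ collapses to a combination of just two terms,
\[
\frac{\rd^3}{\rd\lambda^3}(\lambda^2-4)^{-r-1/2}
=-2(r+1)(2r+1)(2r+3)\,\lambda(\lambda^2-4)^{-r-5/2}
-4(2r+1)(2r+3)(2r+5)\,\lambda(\lambda^2-4)^{-r-7/2},
\]
both of which are precisely of the form $\lambda(\lambda^2-4)^s$ produced by $L$.

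Finally I would make the ansatz $f=A(\lambda^2-4)^{-r-5/2}+B(\lambda^2-4)^{-r-7/2}$ and apply the displayed identity for $L$ with $s=-r-\tfrac52$ (so $2s-1=-2(r+3)$) and with $s=-r-\tfrac72$ (so $2s-1=-2(r+4)$). Matching the resulting two coefficients against the expansion of the right-hand side forces
\[
A=\frac{(r+1)(2r+1)(2r+3)}{r+3},\qquad
B=\frac{2(2r+1)(2r+3)(2r+5)}{r+4},
\]
which reproduces exactly the two displayed terms of \eqref{e3.24}. Here the hypothesis $r\notin\{-3,-4\}$ is precisely what guarantees $r+3\neq0$ and $r+4\neq0$, so that $A$ and $B$ are well-defined; these are the resonant values at which the ansatz exponent would coincide with the homogeneous exponent $\tfrac12$ and the method would break down. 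The only genuinely laborious step is the coefficient bookkeeping in the third-derivative computation, and the single point demanding care is the single-valuedness of $(\lambda^2-4)^{1/2}$ on the non-simply-connected domain $\Omega$; beyond that the argument is routine matching of coefficients.
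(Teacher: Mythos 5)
Your argument is correct, and every number in it checks out: the third derivative of $(\lambda^2-4)^{-r-1/2}$ does collapse to $-2(r+1)(2r+1)(2r+3)\lambda(\lambda^2-4)^{-r-5/2}-4(2r+1)(2r+3)(2r+5)\lambda(\lambda^2-4)^{-r-7/2}$, which agrees with the paper's intermediate formula after dividing by $(\lambda^2-4)^{3/2}$, and your coefficients $A$, $B$ reproduce \eqref{e3.24} exactly. The route differs from the paper's in one step: the paper writes the complete solution by variation of constants as $f(\lambda)=(\lambda^2-4)^{1/2}\int(\lambda^2-4)^{-3/2}\frac{\rd^3}{\rd\lambda^3}(\lambda^2-4)^{-r-1/2}\,\rd\lambda$ and then explicitly antidifferentiates the two terms $\lambda(\lambda^2-4)^{-r-k}$, $k\in\{4,5\}$ (this is where $r\notin\{-3,-4\}$ enters there, as $r+k-1\ne0$), whereas you split into homogeneous plus particular solution and find the particular one by undetermined coefficients via the identity $L[(\lambda^2-4)^s]=(2s-1)\lambda(\lambda^2-4)^s$. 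Your version buys two small things: it avoids computing antiderivatives altogether, reducing everything to linear coefficient matching, and it makes explicit the single-valuedness of $(\lambda^2-4)^{1/2}$ on the non-simply-connected domain $\Omega$ and the identification of $r=-3,-4$ as the resonant exponents ($-r-5/2=1/2$, resp.\ $-r-7/2=1/2$), both of which the paper leaves implicit under ``by standard methods'' and ``since $\Omega$ is connected.'' The paper's version is marginally shorter since the integrating-factor formula delivers completeness of the solution set for free, without a separate homogeneous analysis.
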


\begin{proof} By standard methods the complete solution to \eqref{e3.23} is
given by
\begin{equation}
f(\lambda)=(\lambda^2-4)^{1/2}\int(\lambda^2-4)^{-3/2}
\tfrac{\rd^3}{\rd\lambda^3}(\lambda^2-4)^{-r-1/2}\6\lambda,
\qquad(\lambda\in\Omega),
\label{e3.31}
\end{equation}
where $\int(\lambda^2-4)^{-3/2}
\tfrac{\rd^3}{\rd\lambda^3}(\lambda^2-4)^{-r-1/2}\6\lambda$ denotes the
class of anti-derivatives (on $\Omega$) to the function
$(\lambda^2-4)^{-3/2}
\tfrac{\rd^3}{\rd\lambda^3}(\lambda^2-4)^{-r-1/2}$. Note here that by a
standard calculation,
\[
(\lambda^2-4)^{-3/2}\frac{\rd^3}{\rd\lambda^3}(\lambda^2-4)^{-r-1/2}
=\frac{-(2r+1)(2r+2)(2r+3)\lambda}{(\lambda^2-4)^{r+4}}
-\frac{4(2r+1)(2r+3)(2r+5)\lambda}{(\lambda^2-4)^{r+5}}.
\]
Assuming that $r\notin\{-3,-4\}$, we have (since $\Omega$ is
connected) for $k$ in $\{4,5\}$ that
\[
\int\lambda(\lambda^2-4)^{-r-k}\6\lambda=\frac{-1}{2(r+k-1)}(\lambda^2-4)^{-r-k+1}
+C, \qquad(C\in\C).
\]
We obtain thus that
\begin{equation*}\begin{split}
\int(\lambda^2-4)^{-3/2}
&\frac{\rd^3}{\rd\lambda^3}(\lambda^2-4)^{-r-1/2}\6\lambda\\[.2cm]
&=\frac{(2r+1)(2r+2)(2r+3)}{2(r+3)(\lambda^2-4)^{r+3}}+
\frac{4(2r+1)(2r+3)(2r+5)}{2(r+4)(\lambda^2-4)^{r+4}}+C\\[.2cm]
&=\frac{(r+1)(2r+1)(2r+3)}{(r+3)(\lambda^2-4)^{r+3}}+
\frac{2(2r+1)(2r+3)(2r+5)}{(r+4)(\lambda^2-4)^{r+4}}+C,
\end{split}\end{equation*}
where $C$ is an arbitrary constant.
Inserting this expression into \eqref{e3.31}, formula \eqref{e3.24}
follows readily. 
\end{proof}

\begin{proposition}\label{eksplicit udtryk for eta}
Let $G_n$ denote the Cauchy-transform of $h_n(x)\6x$,
  and consider for each $\lambda$ in $\C\setminus\R$ and $k$ in
  $\N_0$ the asymptotic expansion:
\[
G_n(\lambda)
=\eta_0(\lambda)+\frac{\eta_1(\lambda)}{n^2}+\frac{\eta_2(\lambda)}{n^4}+\cdots
+\frac{\eta_k(\lambda)}{n^{2k}}+O(n^{-2k-2})
\]
given by Corollary~\ref{asymptotisk udv.}. Then for $\lambda$ in
$\C\setminus\R$ we have that
\begin{align}
\eta_0(\lambda)&=\frac{\lambda}{2}-\frac{1}{2}(\lambda^2-4)^{1/2},
\label{e3.25}\\[.2cm]
\eta_1(\lambda)&=(\lambda^2-4)^{-5/2},\label{e3.26}
\end{align}
and generally for $j$ in $\N$, $\eta_j$ takes the form:
\[
\eta_j(\lambda)=\sum_{r=2j}^{3j-1}C_{j,r}(\lambda^2-4)^{-r-1/2}
\]
for constants $C_{j,r}$, $2j\le r\le 3j-1$.
Whenever $j\ge1$, these constants satisfy the recursion formula:
\begin{equation}
C_{j+1,r}=\frac{(2r-3)(2r-1)}{r+1}\big((r-1)C_{j,r-2}+(4r-10)C_{j,r-3}\big),
\quad(2j+2\le r\le 3j+2),
\label{e3.27a}
\end{equation}
where for $r$ in $\{2j+2,3j+2\}$ we adopt the conventions:
$C_{j,2j-1}=0=C_{j,3j}$.
\end{proposition}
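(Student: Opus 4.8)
The plan is to determine the coefficients $\eta_j$ recursively from the system of differential equations in Lemma~\ref{diff lign for eta-j}, integrating each equation by means of Lemma~\ref{loesn af diff lign II} and using a decay condition at infinity to eliminate the arbitrary constant at every stage. The decay condition I would use is that $\eta_j(\lambda)\to0$ as $\lambda=\ri y$ with $|y|\to\infty$. This follows from the integral representation $\eta_j(\lambda)=\frac{1}{2\pi}\int_{-2}^2[T^jg_\lambda](t)\sqrt{4-t^2}\6t$ together with the continuity of $T^j$ established in Proposition~\ref{kontinuitet af S}: by Lemma~\ref{kont i Frechet rum} one has $\|T^jg_\lambda\|_\infty\le C\|g_\lambda\|_{(m)}$, and since $|g_\lambda^{(k)}(t)|=k!/|\ri y-t|^{k+1}\le k!/|y|^{k+1}$ we get $\|g_\lambda\|_{(m)}\to0$, whence $|\eta_j(\lambda)|\le\|T^jg_\lambda\|_\infty\to0$. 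Because the homogeneous solution $(\lambda^2-4)^{1/2}$ satisfies $|(\lambda^2-4)^{1/2}|\to\infty$ along the imaginary axis (cf.\ \eqref{e3.29}) while every explicit term $(\lambda^2-4)^{-r-1/2}$ with $r\ge0$ decays there, this growth dichotomy forces the constant to vanish at each step.

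For $\eta_0$ I would solve the first-order linear equation $(4-\lambda^2)\eta_0'+\lambda\eta_0=2$ directly: the homogeneous solution is a multiple of $(\lambda^2-4)^{1/2}$ and $\lambda/2$ is a particular solution, so $\eta_0(\lambda)=\frac{\lambda}{2}+C(\lambda^2-4)^{1/2}$; since $(\lambda^2-4)^{1/2}=\lambda-2/\lambda+O(\lambda^{-3})$, the requirement $\eta_0\to0$ forces $C=-\tfrac12$ and gives \eqref{e3.25}. For the base case $\eta_1$ I would note that the polynomial part of $\eta_0$ contributes nothing to the third derivative, so $\eta_0'''=-\tfrac12\frac{\rd^3}{\rd\lambda^3}(\lambda^2-4)^{1/2}$, which is exactly $-\tfrac12$ times the right-hand side of \eqref{e3.23} with $r=-1$. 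Applying Lemma~\ref{loesn af diff lign II} with $r=-1$ (the first coefficient vanishes since $r+1=0$, and the second equals $-2$), scaling by $-\tfrac12$, and discarding the homogeneous term by the decay argument yields $\eta_1(\lambda)=(\lambda^2-4)^{-5/2}$, i.e.\ \eqref{e3.26}.

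The general statement I would prove by induction on $j$. Assuming $\eta_j(\lambda)=\sum_{r=2j}^{3j-1}C_{j,r}(\lambda^2-4)^{-r-1/2}$, the equation $(\lambda^2-4)\eta_{j+1}'-\lambda\eta_{j+1}=\eta_j'''$ has right-hand side $\sum_s C_{j,s}\frac{\rd^3}{\rd\lambda^3}(\lambda^2-4)^{-s-1/2}$, so by linearity I would apply Lemma~\ref{loesn af diff lign II} to each summand and add the results. Each source index $s$ contributes, through the two terms of \eqref{e3.24}, at the shifted target indices $s+2$ and $s+3$; as $s$ runs over $\{2j,\dots,3j-1\}$ these shifts fill precisely $\{2j+2,\dots,3j+2\}=\{2(j+1),\dots,3(j+1)-1\}$, which establishes the claimed range, and the homogeneous term is again killed by decay. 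Matching the coefficient of $(\lambda^2-4)^{-r-1/2}$ in $\eta_{j+1}$ (i.e.\ setting $s=r-2$ in the first term and $s=r-3$ in the second) gives $\frac{(r-1)(2r-3)(2r-1)}{r+1}C_{j,r-2}$ and $\frac{2(2r-5)(2r-3)(2r-1)}{r+1}C_{j,r-3}$; factoring out $\frac{(2r-3)(2r-1)}{r+1}$ and writing $2(2r-5)=4r-10$ yields exactly \eqref{e3.27a}, with the conventions $C_{j,2j-1}=0=C_{j,3j}$ accounting automatically for the endpoints $r=2j+2$ and $r=3j+2$ at which one of the two source indices leaves the range of $\eta_j$.

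The main obstacle is the index bookkeeping: one must verify that the two shifts $s\mapsto s+2$ and $s\mapsto s+3$ interlock to cover the target range without gaps, and that the endpoint conventions are consistent with the two-source recursion. The only genuinely non-routine point is justifying that the arbitrary constant vanishes at every stage; I would isolate this once as the decay estimate described above and then invoke it uniformly, noting that although $\C\setminus\R$ has two components, the limit $\lambda=\ri y$, $y\to\pm\infty$, treats each component separately and yields the same real coefficients via the conjugation symmetry $\eta_j(\bar\lambda)=\overline{\eta_j(\lambda)}$.
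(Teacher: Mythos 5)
Your proposal is correct and follows essentially the same route as the paper: the recursion of Lemma~\ref{diff lign for eta-j} is integrated term by term via Lemma~\ref{loesn af diff lign II}, the homogeneous term $C(\lambda^2-4)^{1/2}$ is eliminated by exactly the decay estimate \eqref{e3.28} versus the growth \eqref{e3.29}, and the index bookkeeping and endpoint conventions match the paper's induction step. The only (harmless) deviation is that you derive $\eta_0$ by solving its first-order equation with the decay condition, whereas the paper simply quotes the well-known formula for the Cauchy transform of the semicircle law.
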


Before proceeding to the proof of Proposition~\ref{eksplicit
    udtryk for eta}, we note that for any $j$ in $\N_0$ and $\lambda$
    in $\C\setminus\R$ we have by Lemma~\ref{kont i Frechet rum} that 
\[
|\eta_j(\lambda)|\le\|T^jg_\lambda\|_{\infty}\le
 C(j,0)\|g_\lambda\|_{m(j,0)},
\]
for suitable constants $C(j,0)$ in $(0,\infty)$ and $m(j,0)$ in $\N$
(not depending on $\lambda$). In particular it follows that
\begin{equation}
|\eta_j(\ri x)|\to0, \quad\mbox{as $x\to\infty$, \ $x\in\R$.}
\label{e3.28}
\end{equation}

\begin{proofof}[Proof of Proposition~\ref{eksplicit udtryk for eta}.]

The function $\eta_0$ is the Cauchy transform of the standard
semi-circle distribution, which is well-known to equal the right hand
side of \eqref{e3.25} (see e.g.\ \cite{vdn}).
Now, $\eta_0'''(\lambda)=
-\frac{1}{2}\frac{\rd^3}{\rd\lambda^3}(\lambda^2-4)^{1/2}$, 
so by \eqref{e3.22} and Lemma~\ref{loesn af diff lign II} (with
$r=-1$), it follows that
\[
\eta_1(\lambda)=
-\frac{1}{2}\big(-2(\lambda^2-4)^{1-7/2}\big)+C(\lambda^2-4)^{1/2}
=(\lambda^2-4)^{-5/2}+C(\lambda^2-4)^{1/2},
\]
for a suitable constant $C$ in $\C$. Comparing \eqref{e3.28} and
\eqref{e3.29}, it follows that we must have $C=0$, which establishes 
\eqref{e3.26}. 

Proceeding by induction, assume that for some
$j$ in $\N$ we have established that
\[
\eta_j(\lambda)=\sum_{r=2j}^{3j-1}C_{j,r}(\lambda^2-4)^{-r-1/2}
\]
for suitable constants $C(j,r)$, $r=2j,2j+1,\ldots,3j-1$. Then by
\eqref{e3.22}, Lemma~\ref{loesn af diff lign II} and linearity it
follows that modulo a term of the form $C(\lambda^2-4)^{1/2}$ we have
that 
\begin{equation*}\begin{split}
&\eta_{j+1}(\lambda)\\[.2cm]
&=\sum_{r=2j}^{3j-1}C_{j,r}
\tfrac{(r+1)(2r+1)(2r+3)}{r+3}(\lambda^2-4)^{-r-5/2}
+\sum_{r=2j}^{3j-1}C_{j,r}
\tfrac{2(2r+1)(2r+3)(2r+5)}{r+4}(\lambda^2-4)^{-r-7/2}\\[.2cm]
&=\sum_{s=2j+2}^{3j+1}C_{j,s-2}
\tfrac{(s-1)(2s-3)(2s-1)}{s+1}(\lambda^2-4)^{-s-1/2}
+\sum_{s=2j+3}^{3j+2}C_{j,s-3}
\tfrac{2(2s-5)(2s-3)(2s-1)}{s+1}(\lambda^2-4)^{-s-1/2}\\[.2cm]
&=C_{j,2j}\tfrac{(2j+1)(4j+1)(4j+3)}{2j+3}(\lambda^2-4)^{-2j-2-1/2}
+C_{j,3j-1}\tfrac{2(6j-1)(6j+1)(6j+3)}{3j+3}(\lambda^2-4)^{-3j-2-1/2}\\
&\qquad+\sum_{s=2j+3}^{3j+1}
\tfrac{(2s-3)(2s-1)}{s+1}\Big[(s-1)C_{j,s-2}+
(4s-10)C_{j,s-3}\Big](\lambda^2-4)^{-s-1/2}.
\end{split}\end{equation*}
As before \eqref{e3.28} and \eqref{e3.29} imply that the neglected
term $C(\lambda^2-4)$ must vanish anyway. The resulting
expression in the calculation above has the form
\[
\sum_{s=2(j+1)}^{3(j+1)-1}C_{j+1,s}(\lambda^2-4)^{-s-1/2},
\]
where the constants $C_{j+1,s}$ are immediately given by \eqref{e3.27a},
whenever $2j+3\le s\le 3j+1$. Recalling the convention that
$C_{j,2j-1}=0=C_{j,3j}$, it is easy to check that also
when $s=2j+2$ or $s=3j+2$, formula \eqref{e3.27a} produces, respectively,
the coefficients to $(\lambda^2-4)^{-2j-5/2}$ and
$(\lambda^2-4)^{-3j-5/2}$ appearing in the resulting expression above.
\end{proofof}

Using the recursion formula \eqref{e3.27a}, it follows easily that
\begin{align*}
\eta_2(\lambda)&= 21(\lambda^2-4)^{-9/2}+105(\lambda^2-4)^{-11/2}\\[.2cm]
\eta_3(\lambda)&=
1485(\lambda^2-4)^{-13/2}+18018(\lambda^2-4)^{-15/2}
+50050(\lambda^2-4)^{-17/2}.
\end{align*}

We close this section by identifying the functionals
$g\mapsto\frac{1}{2\pi}\int_{-2}^2[T^jg](t)\sqrt{4-t^2}\6t$ as
distributions (in the sense of L.~Schwarts). Before stating the
result, we recall that the Chebychev polynomials $T_0,T_1,T_2,\ldots$
of the first kind are the polynomials on $\R$ determined by the
relation: 
\begin{equation}
T_k(\cos\theta)=\cos(k\theta), \qquad(\theta\in[0,\pi], \ k\in\N_0).
\label{e3.28c}
\end{equation}

\begin{corollary}\label{distributioner}
For each $j$ in $\N_0$ consider the mapping $\alpha_j\colon
C_b^{\infty}(\R)\to\C$ given by
\[
\alpha_j(g)=\frac{1}{2\pi}\int_{-2}^2[T^jg](t)\sqrt{4-t^2}\6t,\qquad(g\in
C_b^{\infty}(\R)),
\]
where $T\colon C_b^{\infty}(\R)\to C_b^{\infty}(\R)$ is the linear
mapping introduced in Theorem~\ref{eksistens af T}. Consider in
addition for each $k$ in $\N_0$ the mapping $E_k\colon
C_b^{\infty}(\R)\to\C$ given by
\[
E_k(g)=\frac{1}{\pi}\int_{-2}^2g^{(k)}(x)
\frac{T_k(\tfrac{x}{2})}{\sqrt{4-x^2}}\6x,
\]
where $T_0,T_1,T_2,\ldots$ are the Chebychev polynomials given by
\eqref{e3.28c}. Then for any $j$ in $\N$,
\begin{equation}
\alpha_j=\sum_{k=2j}^{3j-1}C_{j,k}\frac{k!}{(2k)!}E_k,
\label{e3.28a}
\end{equation}
where $C_{j,2j},C_{j,2j+1},\ldots,C_{j,3j-1}$ are the constants
described in Proposition~\ref{eksplicit udtryk for eta}. 
\end{corollary}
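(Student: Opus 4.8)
The plan is to test both sides of \eqref{e3.28a} against the resolvent functions $g_\lambda(t)=(\lambda-t)^{-1}$, $\lambda\in\C\setminus\R$, show that the two functionals agree there, and then upgrade agreement on the family $\{g_\lambda\}$ to equality of functionals by a compact-support argument. By the very definition of $\alpha_j$ and by \eqref{e3.18} we have $\alpha_j(g_\lambda)=\eta_j(\lambda)$, and Proposition~\ref{eksplicit udtryk for eta} gives $\eta_j(\lambda)=\sum_{r=2j}^{3j-1}C_{j,r}(\lambda^2-4)^{-r-1/2}$. Hence it suffices to prove the single-index identity
\[
\frac{k!}{(2k)!}\,E_k(g_\lambda)=(\lambda^2-4)^{-k-1/2},\qquad(k\in\N_0),
\]
for then $\big(\sum_kC_{j,k}\tfrac{k!}{(2k)!}E_k\big)(g_\lambda)=\sum_kC_{j,k}(\lambda^2-4)^{-k-1/2}=\eta_j(\lambda)=\alpha_j(g_\lambda)$.

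To evaluate $E_k(g_\lambda)$, note first that $g_\lambda^{(k)}(x)=k!\,(\lambda-x)^{-k-1}$. Substituting $x=2\cos\theta$, using $\sqrt{4-x^2}=2\sin\theta$ on $(0,\pi)$, and invoking the defining relation $T_k(\cos\theta)=\cos k\theta$ from \eqref{e3.28c}, the weight singularity cancels and one obtains
\[
E_k(g_\lambda)=\frac{k!}{\pi}\int_0^\pi\frac{\cos k\theta}{(\lambda-2\cos\theta)^{k+1}}\6\theta.
\]
I then evaluate this integral by writing $\lambda=\xi+\xi^{-1}$ with $|\xi|>1$, so that $(\lambda^2-4)^{1/2}=\xi-\xi^{-1}$ in the sense of \eqref{e3.21a}. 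Starting from the classical formula $\int_0^\pi\frac{\cos k\theta}{\lambda-2\cos\theta}\6\theta=\pi\,\xi^{-k}(\lambda^2-4)^{-1/2}$ and differentiating $k$ times in $\lambda$ (equivalently, passing to $z=\e^{\ri\theta}$ and taking the residue at $z=\xi^{-1}$), I get $\int_0^\pi\frac{\cos k\theta}{(\lambda-2\cos\theta)^{k+1}}\6\theta=\binom{2k}{k}\pi(\lambda^2-4)^{-k-1/2}$. This gives $E_k(g_\lambda)=k!\binom{2k}{k}(\lambda^2-4)^{-k-1/2}=\frac{(2k)!}{k!}(\lambda^2-4)^{-k-1/2}$, which is exactly the required identity.

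The main obstacle is the last step: passing from agreement on $\{g_\lambda\}$ to equality of the two functionals on all of $C_b^\infty(\R)$. The decisive point is that \emph{both} sides of \eqref{e3.28a} depend only on the restriction $g|_{[-2,2]}$. For $E_k$ this is clear from its integral formula. For $\alpha_j$ it follows by induction: from the explicit form of $S$ in the proofs of Proposition~\ref{loesning af diff-ligning} and Proposition~\ref{ligning II}, one has on $(-2,2)$ the formula $[Sg](t)=(4-t^2)^{-3/2}\int_t^2(4-s^2)^{1/2}g_c(s)\6s$ with $g_c=g-\frac{1}{2\pi}\int_{-2}^2g(s)\sqrt{4-s^2}\6s$, so that $[Sg]|_{(-2,2)}$ depends only on $g|_{(-2,2)}$; since $[Tg]|_{(-2,2)}=[(Sg)''']|_{(-2,2)}$ depends only on $[Sg]|_{(-2,2)}$, applying $T$ to $T^{j-1}g$ yields by induction that $[T^jg]|_{(-2,2)}$ depends only on $g|_{(-2,2)}$. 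Consequently both functionals factor through the restriction map and define continuous linear functionals on $C^\infty([-2,2])$ (continuity being inherited through a bounded extension operator together with Proposition~\ref{kontinuitet af S}).

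It remains to note that such functionals are determined by their values on $\{g_\lambda\}$. Indeed, standard asymptotics as $|\lambda|\to\infty$ applied to $\lambda^{m+1}g_\lambda$ show that the closed linear span of $\{g_\lambda|_{[-2,2]}:\lambda\in\C\setminus\R\}$ contains every polynomial and is therefore dense in $C^\infty([-2,2])$; equivalently, a distribution supported in $[-2,2]$ is uniquely determined by its Cauchy transform $\lambda\mapsto\langle\,\cdot\,,g_\lambda\rangle$. Since $\alpha_j$ and the right-hand side of \eqref{e3.28a} are continuous, supported in $[-2,2]$, and agree on all $g_\lambda$, they coincide, which establishes \eqref{e3.28a}.
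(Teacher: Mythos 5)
Your proposal is correct and follows the same overall strategy as the paper's proof: both reduce \eqref{e3.28a} to the single identity $E_k(g_\lambda)=\frac{(2k)!}{k!}(\lambda^2-4)^{-k-1/2}$ by testing against the resolvents $g_\lambda$, using $\alpha_j(g_\lambda)=\eta_j(\lambda)$ together with Proposition~\ref{eksplicit udtryk for eta}, and then invoking the fact that a functional supported on $[-2,2]$ is determined by its Stieltjes transform. The differences are tactical. For the key integral, the paper restricts to $|\lambda|>2$ (reached afterwards by analyticity in $\lambda$), expands $g_\lambda$ in a geometric series, uses the Chebychev moment formula $\frac{1}{\pi}\int_{-2}^2x^p\,T_k(\tfrac{x}{2})(4-x^2)^{-1/2}\6x=\binom{p}{(p-k)/2}$, and resums via the binomial series for $(1-z)^{-k-1/2}$; you instead substitute $x=2\cos\theta$ directly into $g_\lambda^{(k)}$ and evaluate $\int_0^\pi\cos(k\theta)(\lambda-2\cos\theta)^{-k-1}\6\theta=\binom{2k}{k}\pi(\lambda^2-4)^{-k-1/2}$ via the Joukowski substitution $\lambda=\xi+\xi^{-1}$ and a residue at $z=\xi^{-1}$. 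Your formula is correct (it agrees with the paper's answer and checks directly for small $k$), and your route works on all of $\C\setminus[-2,2]$ at once; the only place where detail should be added is the order-$(k+1)$ residue (or $k$-fold differentiation) computation, which you assert rather than carry out. For the final step you are in fact more explicit than the paper: you prove the locality of $T$ on $(-2,2)$ from the integral formula for $S$ (needed to see that $\alpha_j$ is supported on $[-2,2]$, which the paper states without comment), and you justify determination by the Stieltjes transform via density of polynomials in $C^\infty([-2,2])$ rather than citing the standard recovery of a compactly supported distribution from its Cauchy transform; both justifications are valid.
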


From Corollary~\ref{distributioner}
it follows in particular that $\alpha_j$ (restricted to
$C_c^{\infty}(\R)$) is a distribution
supported on $[-2,2]$ (i.e., $\alpha_j(\phi)=0$ for any
function $\phi$ from $C_c^{\infty}(\R)$ such that
$\supp(\phi)\cap[-2,2]=\emptyset$). In addition it follows from
\eqref{e3.28a} that $\alpha_j$ is a distribution of order at most
$3j-1$ (cf.\ \cite[page 156]{ru}), and it is not hard to show that in fact
the order of $\alpha_j$ equals $3j-1$.

\begin{proofof}[Proof of Corollary~\ref{distributioner}.]
Let $j$ in $\N$ be given and let $\Lambda_j$ denote the right
hand side of \eqref{e3.28a}. Since both $\alpha_j$ and $\Lambda_j$ are
supported on $[-2,2]$, it suffices to show that their Stieltjes
transforms coincide, i.e., that
\begin{equation}
\alpha_j(g_\lambda)=\Lambda_j(g_\lambda),
\qquad(\lambda\in\C\setminus\R),
\label{e3.28b}
\end{equation}
where as before $g_\lambda(x)=\frac{1}{\lambda-x}$ for all $x$ in
$\R$. Since the mapping $\lambda\mapsto g_\lambda$ is analytical from
$\C\setminus\R$ into $C_b^{\infty}(\R)$ (cf.\ Lemma~\ref{analytisk}),
and since the linear functionals $\alpha_j,\Lambda_j\colon
C_b^{\infty}(\R)\to\C$ are continuous,
the functions $\lambda\mapsto\alpha_j(g_\lambda)$ and
$\lambda\mapsto\Lambda_j(g_\lambda)$ are analytical on
$\C\setminus\R$. It suffices thus to establish \eqref{e3.28b} for
$\lambda$ in $\C\setminus\R$ such that $|\lambda|>2$. So consider in the
following a fixed such $\lambda$.
We know from Proposition~\ref{eksplicit udtryk for eta} that
\[
\alpha_j(g_\lambda)=\eta_j(\lambda)=\sum_{k=2j}^{3j-1}C_{j,k}
(\lambda^2-4)^{-k-\frac{1}{2}},
\]
with $(\lambda^2-4)^{-k-\frac{1}{2}}$ defined as in \eqref{e3.21a}. It
suffices thus to show that
\[
E_k(g_\lambda)=\frac{(2k)!}{k!}(\lambda^2-4)^{-k-\frac{1}{2}}
\]
for all $k$ in $\N$. So let $k$ from $\N$ be given, and recall that
$g_\lambda(x)=\frac{1}{\lambda}\sum_{\ell=0}^{\infty}
(\frac{x}{\lambda})^{\ell}$ for all $x$ in $[-2,2]$. Since
$\int_{-2}^2\frac{|T_k(\frac{x}{2})|}{\sqrt{4-x^2}}\6x<\infty$, and
since the power series
\[
\sum_{\ell=r}^{\infty}\ell(\ell-1)\cdots(\ell-r+1)z^{\ell-r}
\]
converges uniformly on $\{z\in\C\mid |z|\le\frac{2}{|\lambda|}\}$ for
any $r$ in $\N_0$, it follows that we may change the order of
differentiation, summation and integration in the following
calculation: 
\begin{equation*}\begin{split}
E_k(g_\lambda)&=\frac{1}{\pi\lambda}\int_{-2}^2
\Big[\frac{\rd^k}{\rd x^k}\sum_{\ell=0}^{\infty}
\Big(\frac{x}{\lambda}\Big)^{\ell}\Big]
\frac{T_k(\tfrac{x}{2})}{\sqrt{4-x^2}}\6x\\[.2cm]
&=\frac{1}{\pi\lambda}\int_{-2}^2
\Big[\lambda^{-k}\sum_{\ell=k}^{\infty}\ell(\ell-1)\cdots(\ell-k+1)
\Big(\frac{x}{\lambda}\Big)^{\ell-k}\Big]
\frac{T_k(\tfrac{x}{2})}{\sqrt{4-x^2}}\6x\\[.2cm]
&=\sum_{\ell=k}^{\infty}\frac{\ell!}{(\ell-k)!}\Big[
\frac{1}{\pi}\int_{-2}^2x^{\ell-k}
\frac{T_k(\tfrac{x}{2})}{\sqrt{4-x^2}}\6x\Big]\lambda^{-\ell-1}.
\end{split}\end{equation*}
Using the substitution $x=2\cos\theta$, $\theta\in(0,\pi)$, as well as
\eqref{e3.28c} and Euler's formula for $\cos\theta$, it follows by a
standard calculation that
\[
\frac{1}{\pi}\int_{-2}^2x^{p}\frac{T_k(\tfrac{x}{2})}{\sqrt{4-x^2}}\6x=
\begin{cases}
\binom{p}{(p-k)/2}, &\mbox{if $p\in\{k+2m\mid m\in\N_0\}$},\\
0, &\mbox{otherwise}.
\end{cases}
\]
We thus find that
\begin{equation*}\begin{split}
E_k(g_\lambda)&=\sum_{m=0}^{\infty}\frac{(2k+2m)!}{(k+2m)!}
\binom{k+2m}{m}\lambda^{-2m-2k-1}
=\sum_{m=0}^{\infty}\frac{(2k+2m)!}{m!(k+m)!}\lambda^{-2m-2k-1}
\\[.2cm]
&=\frac{(2k)!}{k!}
\sum_{m=0}^{\infty}4^m\binom{k+m-\frac{1}{2}}{m}\lambda^{-2m-2k-1}
=\lambda^{-2k-1}\frac{(2k)!}{k!}
\sum_{m=0}^{\infty}\binom{k+m-\frac{1}{2}}{m}
\Big(\frac{4}{\lambda^2}\Big)^m,
\end{split}\end{equation*}
where the third equality results from a standard calculation on
binomial coefficients. Recall now that
\[
(1-z)^{-k-\frac{1}{2}}=\sum_{m=0}^{\infty}\binom{k+m-\frac{1}{2}}{m}z^m, 
\qquad(z\in\C, \ |z|<1),
\]
where the left hand side is formally defined as
$(\sqrt{1-z})^{-2k-1}$, with $\sqrt{\cdot}$ the usual holomorphic
branch of the square root on $\C\setminus(-\infty,0]$. We may thus
conclude that
\[
E_k(g_\lambda)=\lambda^{-2k-1}\frac{(2k)!}{k!}
\Big(1-\frac{4}{\lambda^2}\Big)^{-k-\frac{1}{2}}
=\frac{(2k)!}{k!}\Big(\lambda\sqrt{1-\tfrac{4}{\lambda^2}}\Big)^{-2k-1}
=\frac{(2k)!}{k!}\big(\lambda^2-4\big)^{-k-\frac{1}{2}},
\]
where the last equality follows from \eqref{e3.21a}. This completes
the proof.
\end{proofof}

\section{Asymptotic expansion for second order statistics}\label{2-dim
  udv} 

In this section we shall establish asymptotic expansions, similar to
Corollary~\ref{asymptotisk udv.}, for covariances in the form
$\Cov\{\Tr_n[f(X_n)],\Tr_n[g(Y_n)]\}$, where $f,g\in C_b^{\infty}(\R)$,
$X_n$ is a $\GUE(n,\frac{1}{n})$ random matrix and $\Tr_n$ denotes the
(un-normalized) trace on $M_n(\C)$.

For complex-valued random variables $Y,Z$ with second moments (and
defined on the same probability space), we use the notation:
\[
\V\{Y\}=\E\big\{(Y-\E\{Y\})^2\big\}, \qand
\Cov\{Y,Z\}=\E\big\{(Y-\E\{Y\})(Z-\E\{Z\})\big\}.
\]
Note in particular that $\V\{Y\}$ is generally not a positive number,
and that $\Cov\{Y,Z\}$ is truly linear in both $Y$ and $Z$.

\begin{lemma}\label{Pastur}
Let $\sigma$ be a positive number, and let $X_N$ be a
$\GUE(n,\sigma^2$) random matrix.
For any function $f$ from $C^{\infty}_b(\R)$ we then have that
\begin{equation}
\V\big\{\Tr_n[f(X_n)]\big\}=\frac{1}{4\sigma^2}\int_{\R^2}(f(x)-f(y))^2
\psi_{n}\big(\tfrac{x}{\sqrt{2\sigma^2}},
\tfrac{y}{\sqrt{2\sigma^2}}\big)^2\6x\6y,
\label{e4.1}
\end{equation}
where the kernel $\psi_n$ is given by
\begin{equation}
\psi_{n}(x,y)=\sum_{j=0}^{n-1}\varphi_j(x)\varphi_j(y)
=\sqrt{\frac{n}{2}}\frac{\varphi_n(x)\varphi_{n-1}(y) -
  \varphi_{n-1}(x)\varphi_{n}(y)}{x-y},
\label{e4.1a}
\end{equation}
and the $\varphi_j$'s are the Hermite functions introduced in \eqref{e3.1}.
\end{lemma}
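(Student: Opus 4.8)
The plan is to exploit the fact that the eigenvalues $\lambda_1,\dots,\lambda_n$ of a $\GUE(n,\sigma^2)$ matrix form a determinantal point process, so that $\Tr_n[f(X_n)]=\sum_{i=1}^n f(\lambda_i)$ is a linear eigenvalue statistic whose variance is governed by the correlation kernel. Since $f$ is bounded, $|\Tr_n[f(X_n)]|\le n\|f\|_\infty$, so all moments exist and questions of integrability are vacuous. First I would pass to the standard scaling: the joint eigenvalue density of $\GUE(n,\sigma^2)$ is proportional to $\prod_{i<j}(\lambda_i-\lambda_j)^2\exp(-\tfrac{1}{2\sigma^2}\sum_i\lambda_i^2)$, so the substitution $\mu_i=\lambda_i/\sqrt{2\sigma^2}$ produces the Hermite ensemble with weight $e^{-\mu^2}$, whose correlation kernel is precisely the Christoffel--Darboux kernel $\psi_n(x,y)=\sum_{j=0}^{n-1}\varphi_j(x)\varphi_j(y)$ built from the Hermite functions \eqref{e3.1}.

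Next I would compute the variance at the level of this determinantal process. Writing $\tilde f(\mu)=f(\sqrt{2\sigma^2}\,\mu)$, so that $\Tr_n[f(X_n)]=\sum_i\tilde f(\mu_i)$, the one- and two-point correlation functions $\psi_n(x,x)$ and $\psi_n(x,x)\psi_n(y,y)-\psi_n(x,y)^2$ give, after expanding $\big(\sum_i\tilde f(\mu_i)\big)^2$ into diagonal and off-diagonal parts,
\[
\V\big\{\textstyle\sum_i\tilde f(\mu_i)\big\}
=\int_\R\tilde f(x)^2\psi_n(x,x)\6x
-\int_{\R^2}\tilde f(x)\tilde f(y)\psi_n(x,y)^2\6x\6y.
\]
No complex conjugate appears at any stage, which is exactly why the final formula carries $(f(x)-f(y))^2$ rather than $|f(x)-f(y)|^2$ and remains valid for complex-valued $f$. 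The decisive step is the projection identity $\psi_n(x,x)=\int_\R\psi_n(x,y)^2\6y$, valid because the $\varphi_j$ are orthonormal, so that $\psi_n$ is the kernel of an orthogonal projection. Using it to write the diagonal term symmetrically as $\tfrac12\int_{\R^2}(\tilde f(x)^2+\tilde f(y)^2)\psi_n(x,y)^2\6x\6y$ and combining with the second integral yields
\[
\V\big\{\textstyle\sum_i\tilde f(\mu_i)\big\}
=\tfrac12\int_{\R^2}(\tilde f(x)-\tilde f(y))^2\psi_n(x,y)^2\6x\6y.
\]

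Finally I would undo the scaling: substituting $u=x/\sqrt{2\sigma^2}$, $v=y/\sqrt{2\sigma^2}$ in the last display, the Jacobian contributes a factor $(2\sigma^2)^{-1}$ that combines with the prefactor $\tfrac12$ to give $\tfrac{1}{4\sigma^2}$, while $\tilde f$ reverts to $f$ at the unscaled arguments and $\psi_n$ acquires the arguments $x/\sqrt{2\sigma^2}$, $y/\sqrt{2\sigma^2}$, producing exactly \eqref{e4.1}. The second equality in \eqref{e4.1a} is the Christoffel--Darboux formula for the Hermite functions, a standard consequence of their three-term recurrence.

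The main obstacle is the bookkeeping of the second step: rigorously reducing $\E\{\sum_i\tilde f(\mu_i)\}$ and its square to integrals against the one- and two-point correlation functions, i.e.\ invoking the standard determinantal correlation formula for the GUE (which I would cite, or derive from the Vandermonde structure of the joint eigenvalue density underlying \eqref{e3.3}), and then carrying the symmetrization through without sign errors. The analytic hypotheses needed to apply Fubini are trivially met, since $f$ is bounded and $\psi_n$ decays like a Gaussian in both variables.
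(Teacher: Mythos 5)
Your proposal is correct, but it takes a genuinely more self-contained route than the paper: the paper's entire proof of \eqref{e4.1} is a citation to the proof of Lemma~3 in the Pastur--Shcherbina reference, plus the observation that the second equality in \eqref{e4.1a} is the Christoffel--Darboux formula, whereas you actually derive \eqref{e4.1} from the determinantal structure of the GUE eigenvalues. Your computation is sound: the identity $\V\{\sum_i g(\mu_i)\}=\int g^2\psi_n(x,x)\6x-\int\int g(x)g(y)\psi_n(x,y)^2\6x\6y$ follows from the one- and two-point correlation functions, the projection identity $\psi_n(x,x)=\int_{\R}\psi_n(x,y)^2\6y$ (orthonormality of the Hermite functions) symmetrizes the diagonal term, and the change of variables $\mu=\lambda/\sqrt{2\sigma^2}$ correctly turns the prefactor $\tfrac12$ into $\tfrac{1}{4\sigma^2}$; the absence of complex conjugation is consistent with the paper's convention $\V\{Y\}=\E\{(Y-\E\{Y\})^2\}$. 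What your approach buys is transparency and independence from the external reference; what it costs is that the determinantal correlation formula for the GUE (that the $k$-point functions are determinants of the Christoffel--Darboux kernel) is itself a nontrivial input that you must either prove from the Vandermonde structure or cite --- you acknowledge this explicitly, and with that citation supplied the argument is complete. One cosmetic point: you should state that for complex-valued $f$ the identity follows by bilinearity of $\Cov$ (or by splitting into real and imaginary parts), since the correlation-function formulas are usually stated for real test functions.
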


\begin{proof} Formula \eqref{e4.1} appears in the proof of
\cite[Lemma~3]{ps} with $\psi$ given by the first equality in
\eqref{e4.1a}. The second equality in \eqref{e4.1a} is equivalent to
the Christoffel-Darboux formula for the Hermite polynomials (see
\cite[p.~193 formula (11)]{htf}).
\end{proof}

\begin{corollary}\label{Pastur for covarians}
Let $X_n$ be a $\GUE(n,\frac{1}{n})$ random matrix.

\begin{itemize}

\item[(i)] For any function $f$ from $C_b^{\infty}(\R)$ we have that
\[
\V\big\{\Tr_n[f(X)]\big\}=
\int_{\R^2}\Big(\frac{f(x)-f(y)}{x-y}\Big)^2\rho_n(x,y)\6x\6y,
\]
where the kernel $\rho_n$ is given by
\begin{equation}
\rho_n(x,y)=\tfrac{n}{4}\big[\varphi_n(\textstyle{\sqrt{\tfrac{n}{2}}x})
\varphi_{n-1}(\sqrt{\tfrac{n}{2}}y)-\varphi_{n-1}(\sqrt{\tfrac{n}{2}}x)
\varphi_n(\sqrt{\tfrac{n}{2}}y)\big]^2.
\label{e4.2}
\end{equation}

\item[(ii)] For any functions $f$ and $g$ from $C_b^{\infty}(\R)$ we have that
\[
\Cov\big\{\Tr_n[f(X_n)],\Tr_n[g(X_n)]\big\}=
\int_{\R^2}\Big(\frac{f(x)-f(y)}{x-y}\Big)
\Big(\frac{g(x)-g(y)}{x-y}\Big)\rho_n(x,y)\6x\6y.
\]
\end{itemize}
\end{corollary}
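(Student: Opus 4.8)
The plan is to obtain part (i) by a straightforward rescaling in Lemma~\ref{Pastur}, and then to derive part (ii) from part (i) by polarization, exploiting the bilinearity of $\Cov$.

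For part (i), I would apply Lemma~\ref{Pastur} with $\sigma^2=\frac1n$, so that $\sqrt{2\sigma^2}=\sqrt{2/n}$ and hence $\frac{x}{\sqrt{2\sigma^2}}=\sqrt{\tfrac n2}\,x$. This turns the prefactor $\frac{1}{4\sigma^2}$ into $\frac n4$ and gives
\[
\V\big\{\Tr_n[f(X_n)]\big\}=\frac n4\int_{\R^2}(f(x)-f(y))^2\,
\psi_n\big(\sqrt{\tfrac n2}\,x,\sqrt{\tfrac n2}\,y\big)^2\6x\6y.
\]
Next I would insert the second (Christoffel--Darboux) expression for $\psi_n$ from \eqref{e4.1a}, evaluated at $(\sqrt{n/2}\,x,\sqrt{n/2}\,y)$. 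Its denominator is $\sqrt{n/2}\,x-\sqrt{n/2}\,y=\sqrt{n/2}\,(x-y)$, so the leading factor $\sqrt{n/2}$ cancels and
\[
\psi_n\big(\sqrt{\tfrac n2}\,x,\sqrt{\tfrac n2}\,y\big)
=\frac{\varphi_n(\sqrt{n/2}\,x)\varphi_{n-1}(\sqrt{n/2}\,y)
-\varphi_{n-1}(\sqrt{n/2}\,x)\varphi_n(\sqrt{n/2}\,y)}{x-y}.
\]
Squaring and absorbing the factor $(x-y)^{-2}$ into $(f(x)-f(y))^2$ converts the latter into $\big(\tfrac{f(x)-f(y)}{x-y}\big)^2$, while the remaining factor $\tfrac n4[\,\cdots\,]^2$ is precisely the kernel $\rho_n$ of \eqref{e4.2}. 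Thus (i) follows by inspection.

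For part (ii), I would use the observations recorded just before Lemma~\ref{Pastur}: that $\Cov$ is truly bilinear and symmetric and that $\V\{Y\}=\Cov\{Y,Y\}$. The right-hand side of (i), regarded as the pairing
\[
B(f,g)=\int_{\R^2}\Big(\frac{f(x)-f(y)}{x-y}\Big)
\Big(\frac{g(x)-g(y)}{x-y}\Big)\rho_n(x,y)\6x\6y,
\]
is likewise manifestly a symmetric bilinear form in $(f,g)$. Since $\Tr_n[(f+g)(X_n)]=\Tr_n[f(X_n)]+\Tr_n[g(X_n)]$, applying part (i) to each of $f+g$, $f$, and $g$ and using the polarization identity $\Cov\{Y,Z\}=\tfrac12\big(\V\{Y+Z\}-\V\{Y\}-\V\{Z\}\big)$ yields
\[
\Cov\big\{\Tr_n[f(X_n)],\Tr_n[g(X_n)]\big\}
=\tfrac12\big(B(f+g,f+g)-B(f,f)-B(g,g)\big)=B(f,g),
\]
which is exactly the asserted formula.

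There is no genuine obstacle here; the only point worth a word is integrability, which legitimizes the rearrangements (cancellation of the $\sqrt{n/2}$, absorption of $(x-y)^{-2}$, and the bilinear expansion). Since $f,g\in C_b^\infty(\R)$, each difference quotient $\frac{f(x)-f(y)}{x-y}$ extends smoothly across the diagonal and is bounded by $\|f'\|_\infty$, while the rapid Gaussian decay of the Hermite functions makes $\rho_n$ integrable on $\R^2$. Hence every integrand above is a bounded function times an integrable one, so all the integrals converge absolutely and the manipulations are justified.
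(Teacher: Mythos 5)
Your proposal is correct and follows essentially the same route as the paper: part (i) by substituting $\sigma^2=\tfrac1n$ into Lemma~\ref{Pastur} and using the Christoffel--Darboux form of $\psi_n$, and part (ii) by polarization from (i). The only cosmetic difference is that the paper uses the identity $\Cov\{Y,Z\}=\tfrac14\big(\V\{Y+Z\}-\V\{Y-Z\}\big)$ applied to $f+g$ and $f-g$, whereas you use $\Cov\{Y,Z\}=\tfrac12\big(\V\{Y+Z\}-\V\{Y\}-\V\{Z\}\big)$; both are valid here since $\Cov$ is the genuinely bilinear (non-sesquilinear) form defined in the paper.
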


\begin{proof} (i) \ This follows from Lemma~\ref{Pastur} by a straightforward
calculation, setting $\sigma^2=\frac{1}{n}$ in \eqref{e4.1}.

(ii) \ Using (i) on the functions $f+g$ and $f-g$ we find that
\begin{equation*}\begin{split}
\Cov\big\{&\Tr_n[f(X_n)],\Tr_n[g(X_n)]\big\}\\[.2cm]
&=\frac{1}{4}
\Big(\V\big\{\Tr_n[f(X_n)+g(X_n)]\big\}
-\V\big\{\Tr_n[f(X_n)-g(X_n)]\big\}\Big)
\\[.2cm]
&=\frac{1}{4}\int_{\R^2}\frac{\big((f+g)(x)-(f+g)(y)\big)^2
-\big((f-g)(x)-(f-g)(y)\big)^2}{(x-y)^2}\rho_n(x,y)\6x\6y\\[.2cm]
&=\frac{1}{4}\int_{\R^2}\frac{4f(x)g(x)+4f(y)g(y)-4f(x)g(y)-4f(y)g(x)}
{(x-y)^2}\rho_n(x,y)\6x\6y\\[.2cm]
&=\int_{\R^2}\Big(\frac{f(x)-f(y)}{x-y}\Big)
\Big(\frac{g(x)-g(y)}{x-y}\Big)\rho_n(x,y)\6x\6y,
\end{split}\end{equation*}
as desired.
\end{proof}

In order to establish the desired asymptotic expansion of
$\Cov\{\Tr_n[f(X_n)],\Tr_n[g(X_n)]\}$, we are lead by
Corollary~\ref{Pastur for covarians}(ii)
to study the asymptotic behavior, as $n\to\infty$, of the
probability measures $\rho_n(x,y)\6x\6y$. As a first step, it is
instructive to note that $\rho_n(x,y)\6x\6y$ converges weakly, as
$n\to\infty$, to the probability measure $\rho(x,y)\6x\6y$, where
\begin{equation}
\rho(x,y)=\frac{1}{4\pi^2}\frac{4-xy}{\sqrt{4-x^2}\sqrt{4-y^2}}
1_{(-2,2)}(x)1_{(-2,2)}(y).
\label{e4.3a}
\end{equation}
We shall give a short proof of this fact in 
Proposition~\ref{svag konvergens af rho-n} below. 
It implies in particular that if $(X_n)$
is a sequence of random matrices, such that $X_n\sim
\GUE(n,\frac{1}{n})$ for all $n$, then
\[
\lim_{n\to\infty}\Cov\big\{\Tr_n[f(X_n)],\Tr_n[g(X_n)]\big\}
=\int_{\R^2}\Big(\frac{f(x)-f(y)}{x-y}\Big)
\Big(\frac{g(x)-g(y)}{x-y}\Big)\rho(x,y)\6x\6y,
\]
for all $f,g\in C_b^{\infty}(\R)$.

The key point in the approach given below is to express
the density $\rho_n$ in terms of the spectral density $h_n$ of
$\GUE(n,\frac{1}{n})$ (see Proposition~\ref{formel for rho-n} below).

\begin{lemma}\label{formel for zeta-n}
Consider the functions $\zeta_n\colon\R^2\to\R$ and
$\beta_n\colon\R\to\R$ given by
\[
\zeta_n(x,y)=\frac{1}{2}\big[\varphi_n(x)\varphi_{n-1}(y)-
\varphi_{n-1}(x)\varphi_n(y)\big]^2,\qquad((x,y)\in\R^2),
\]
and
\[
\beta_n(x)=\sum_{j=0}^{n-1}\varphi_j(x)^2, \qquad(x\in\R),
\]
with $\phi_0,\phi_1,\phi_2,\ldots$ the Hermite functions given in
\eqref{e3.1}. We then have
\[
\zeta_n(x,y)=f_n(x)f_n(y)-g_n(x)g_n(y)-k_n(x)k_n(y),
\qquad((x,y)\in\R^2),
\]
where
\begin{align}
f_n(x)&=\tfrac{1}{2}\big(\varphi_n(x)^2+\varphi_{n-1}(x)^2\big)
 = \tfrac{1}{2n}\big(\beta_n(x)-x\beta_n'(x)\big),\label{e4.3}\\[.2cm]
g_n(x)&=\tfrac{1}{2}\big(\varphi_n(x)^2-\varphi_{n-1}(x)^2\big)
 = \tfrac{1}{4n}\beta_n''(x),\label{e4.4}\\[.2cm]
k_n(x)&=\varphi_{n-1}(x)\varphi_n(x) = \tfrac{-1}{\sqrt{2n}}\beta_n'(x)
\label{e4.5}
\end{align}
for all $x$ in $\R$.
\end{lemma}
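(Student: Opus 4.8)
The plan is to split the statement into two logically independent pieces: the product decomposition of $\zeta_n$, which is purely algebraic, and the three formulas \eqref{e4.3}--\eqref{e4.5} relating $f_n,g_n,k_n$ to $\beta_n$ and its derivatives, which is where the Hermite-function input enters.

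For the decomposition I would abbreviate $a=\varphi_n(x)$, $b=\varphi_{n-1}(x)$, $c=\varphi_n(y)$, $d=\varphi_{n-1}(y)$, so that $\zeta_n(x,y)=\tfrac12(ad-bc)^2$ while the proposed right-hand side reads $\tfrac14(a^2+b^2)(c^2+d^2)-\tfrac14(a^2-b^2)(c^2-d^2)-abcd$. Expanding the two products, their difference collapses to $\tfrac12(a^2d^2+b^2c^2)$, and subtracting $abcd$ leaves $\tfrac12(ad-bc)^2$. This needs nothing beyond the first expressions for $f_n,g_n,k_n$.

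The substance lies in the $\beta_n$-identities, and the tools I would record first are the two ladder relations $\varphi_k'=-t\varphi_k+\sqrt{2k}\,\varphi_{k-1}$ (lowering) and $\varphi_k'=t\varphi_k-\sqrt{2(k+1)}\,\varphi_{k+1}$ (raising), which follow from \eqref{e3.1}--\eqref{e3.2} and the classical recursions $H_k'=2kH_{k-1}$, $H_{k+1}=2tH_k-2kH_{k-1}$. To get \eqref{e4.5} I would differentiate $\beta_n=\sum_{j=0}^{n-1}\varphi_j^2$ term by term and insert the half-sum $\varphi_j'=\sqrt{j/2}\,\varphi_{j-1}-\sqrt{(j+1)/2}\,\varphi_{j+1}$; the resulting sum telescopes, leaving only the top term, so $\beta_n'=-\sqrt{2n}\,\varphi_{n-1}\varphi_n=-\sqrt{2n}\,k_n$. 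Differentiating once more gives $\beta_n''=-\sqrt{2n}(\varphi_{n-1}'\varphi_n+\varphi_{n-1}\varphi_n')$.

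The step I expect to be the main obstacle is that naive differentiation reintroduces the off-diagonal products $\varphi_{n-2}\varphi_n$ and $\varphi_{n-1}\varphi_{n+1}$, which are absent from the targets. The device that removes them is to break symmetry: apply the lowering relation to one factor and the raising relation to the other, exploiting their common coefficient $\sqrt{2n}$. For $\beta_n''$, lowering $\varphi_n$ and raising $\varphi_{n-1}$ gives $\varphi_{n-1}'\varphi_n+\varphi_{n-1}\varphi_n'=-\sqrt{2n}(\varphi_n^2-\varphi_{n-1}^2)$ — every $t$-cross term and every $\varphi_{n\pm2}$ term cancels — so $\beta_n''=2n(\varphi_n^2-\varphi_{n-1}^2)=4n\,g_n$, which is \eqref{e4.4}. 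For \eqref{e4.3} I would start from the confluent Christoffel--Darboux identity $\beta_n=\sqrt{n/2}\,(\varphi_{n-1}\varphi_n'-\varphi_n\varphi_{n-1}')$, obtained by letting $y\to x$ (l'H\^opital) in the second equality of \eqref{e4.1a}; the same lowering/raising substitution yields $\beta_n=n(\varphi_n^2+\varphi_{n-1}^2)-\sqrt{2n}\,x\,\varphi_{n-1}\varphi_n$, and recognizing the last term as $x\beta_n'$ via \eqref{e4.5} gives $\beta_n-x\beta_n'=n(\varphi_n^2+\varphi_{n-1}^2)=2n\,f_n$. Alternatively one can bypass the confluent formula by verifying that $\beta_n-x\beta_n'$ and $n(\varphi_n^2+\varphi_{n-1}^2)$ have equal derivatives (again by the lowering/raising cancellation) and the same limit $0$ as $x\to\infty$.
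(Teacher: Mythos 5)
Your proposal is correct, and its skeleton matches the paper's proof: the same purely algebraic expansion of $\tfrac12(ad-bc)^2$ (the paper phrases it via $f_n\pm g_n=\varphi_n^2,\varphi_{n-1}^2$, which is your computation in disguise), the telescoping derivation of $\beta_n'=-\sqrt{2n}\,\varphi_{n-1}\varphi_n$ (the paper simply cites this as formula \eqref{e4.7}), and the identical lowering/raising cancellation $\varphi_{n-1}'\varphi_n+\varphi_{n-1}\varphi_n'=-\sqrt{2n}(\varphi_n^2-\varphi_{n-1}^2)$ for $\beta_n''$. The one place you genuinely diverge is \eqref{e4.3}: the paper computes $x\beta_n'(x)=2\sum_{j=0}^{n-1}x\varphi_j(x)\varphi_j'(x)$ directly, inserting the three-term recurrences \eqref{e4.6} and \eqref{e4.8} so that the cross terms cancel and the sum telescopes to $\sum_{j=0}^{n-2}\varphi_j^2-(n-1)\varphi_{n-1}^2-n\varphi_n^2$, whence $\beta_n-x\beta_n'=n(\varphi_{n-1}^2+\varphi_n^2)$; you instead pass through the confluent Christoffel--Darboux identity $\beta_n=\sqrt{n/2}\,(\varphi_{n-1}\varphi_n'-\varphi_n\varphi_{n-1}')$ obtained from the second equality in \eqref{e4.1a} by letting $y\to x$. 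Both are valid: your route is shorter and reuses the kernel identity already stated in Lemma~\ref{Pastur}, at the cost of the extra limiting argument needed to justify the confluent form; the paper's route stays entirely at the level of the elementary recursions and never needs Christoffel--Darboux at the diagonal. Your fallback (matching derivatives using \eqref{e4.4} and the decay of Hermite functions at infinity) also checks out.
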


\begin{proof} Note first that with $f_n,g_n$ and $k_n$ defined by the leftmost
equalities in \eqref{e4.3}-\eqref{e4.5} we have that
\[
f_n(x)+g_n(x)=\varphi_n(x)^2 \qand f_n(x)-g_n(x)=\varphi_{n-1}(x)^2,
\]
for all $x$ in $\R$. Therefore,
\begin{equation*}\begin{split}
\zeta_n(x,y)&=\frac{1}{2}\big[\varphi_n(x)\varphi_{n-1}(y)-
\varphi_{n-1}(x)\varphi_n(y)\big]^2\\[.2cm]
&=\frac{1}{2}\big[(f_n(x)+g_n(x))(f_n(y)-g_n(y))
+(f_n(x)-g_n(x))(f_n(y)+g_n(y))-2k_n(x)k_n(y)\big]\\[.2cm]
&=f_n(x)f_n(y)-g_n(x)g_n(y)-k_n(x)k_n(y),
\end{split}\end{equation*}
for any $(x,y)$ in $\R^2$. It remains thus to establish the three
rightmost equalities in \eqref{e4.3}-\eqref{e4.5}. For this we use the
well-known formulas (cf.\ e.g.\ \cite[formulas (2.3)-(2.6)]{ht1}):
\begin{align}
\varphi_n'(x)&={\textstyle \sqrt{\frac{n}{2}}\varphi_{n-1}(x)-
\sqrt{\frac{n+1}{2}}\varphi_{n+1}(x)}, 
\label{e4.6} \\
x\varphi_n(x)&=\textstyle{\sqrt{\frac{n+1}{2}}\varphi_{n+1}(x)+
\sqrt{\frac{n}{2}}\varphi_{n-1}(x)}, 
\label{e4.8}\\
\frac{\rd}{\rd x}\Big(\sum_{k=0}^{n-1}\varphi_k(x)^2\Big) &=
-\sqrt{2n}\varphi_n(x)\varphi_{n-1}(x), 
\label{e4.7}
\end{align}
which hold for all $n$ in $\N_0$, when we adopt the convention:
$\varphi_{-1}\equiv 0$.

The second equality in \eqref{e4.5} is an immediate consequence of
\eqref{e4.7}. Combining \eqref{e4.6} with \eqref{e4.8}, we note next
that
\[
\varphi_n'(x)=-x\varphi_n(x)+\sqrt{2n}\varphi_{n-1}(x) \qand
\varphi'_{n-1}(x)=x\varphi_{n-1}(x)-\sqrt{2n}\varphi_n(x),
\]
and therefore by \eqref{e4.7}
\begin{equation*}\begin{split}
\beta_n''(x)&=-\sqrt{2n}\big(\varphi_{n-1}'(x)\varphi_n(x)+
\varphi_{n-1}(x)\varphi_n'(x)\big)\\[.2cm]
&=-\sqrt{2n}\big(x\varphi_{n-1}(x)\varphi_n(x)-\sqrt{2n}\varphi_n(x)^2
-x\varphi_{n-1}(x)\varphi_n(x)+\sqrt{2n}\varphi_{n-1}(x)^2\big)\\[.2cm]
&=2n\big(\varphi_n(x)^2-\varphi_{n-1}(x)^2\big),
\end{split}\end{equation*}
from which the second equality in \eqref{e4.4} follows readily.
Using once more \eqref{e4.6} and \eqref{e4.8}, we note finally that
\begin{equation*}\begin{split}
x\beta_n'(x)&=2\sum_{j=0}^{n-1}x\varphi_j(x)\varphi_j'(x)\\[.2cm]
&=2\sum_{j=0}^{n-1}\Big(\sqrt{\tfrac{j+1}{2}}\varphi_{j+1}(x)
+\sqrt{\tfrac{j}{2}}\varphi_{j-1}(x)\Big)
\Big(\sqrt{\tfrac{j}{2}}\varphi_{j-1}(x)
-\sqrt{\tfrac{j+1}{2}}\varphi_{j+1}(x)\Big)\\[.2cm]
&=\sum_{j=0}^{n-1}\big(\varphi_{j-1}(x)^2+(j-1)\varphi_{j-1}(x)^2
-(j+1)\varphi_{j+1}(x)^2\big)\\[.2cm]
&=\Big(\sum_{j=0}^{n-2}\varphi_j(x)^2\Big)
-(n-1)\varphi_{n-1}(x)^2-n\varphi_n(x)^2,
\end{split}\end{equation*}
and therefore
\[
\beta_n(x)-x\beta_n'(x)
=\varphi_{n-1}(x)^2+(n-1)\varphi_{n-1}(x)^2+n\varphi_n(x)^2
=n\big(\varphi_{n-1}(x)^2+\varphi_n(x)^2\big),
\]
which establishes the second equality in \eqref{e4.3}.
\end{proof}

\begin{proposition}\label{formel for rho-n}
Let $\rho_n$ be the kernel given by \eqref{e4.2} and let $h_n$ be the
spectral density of a $\GUE(n,\frac{1}{n})$ random matrix (cf.\
\eqref{e3.3}). We then have
\begin{equation}
\rho_n(x,y)=\frac{1}{4}\big[\tilde{h}_n(x)\tilde{h}_n(y)-4h_n'(x)h_n'(y)
-\tfrac{1}{n^2}h_n''(x)h_n''(y)\big], \qquad((x,y)\in\R^2),
\label{e4.11}
\end{equation}
where
\[
\tilde{h}_n(x)=h_n(x)-xh_n'(x), \qquad(x\in\R).
\]
\end{proposition}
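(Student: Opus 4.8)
The plan is to reduce the entire statement to Lemma~\ref{formel for zeta-n} by a single rescaling, so that all the Hermite-function algebra is inherited rather than redone. Write $c=\sqrt{\tfrac{n}{2}}$, so that $c^2=\tfrac n2$ and $\sqrt{2n}=2c$. First I would compare the definition \eqref{e4.2} of $\rho_n$ with the definition of $\zeta_n$: the bracketed difference of Hermite products in $\rho_n$, evaluated at the scaled points $cx,cy$, is precisely the square sitting inside $\zeta_n$. Matching the prefactors $\tfrac n4$ and $\tfrac12$ then gives the clean identity
\[
\rho_n(x,y)=\frac{n}{2}\,\zeta_n(cx,cy),\qquad((x,y)\in\R^2).
\]

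Next I would rewrite the three building blocks $f_n,g_n,k_n$ of Lemma~\ref{formel for zeta-n}, evaluated at $ct$, in terms of $h_n$ and its derivatives. The key is that $h_n(t)=\tfrac{1}{\sqrt{2n}}\beta_n(ct)=\tfrac{1}{2c}\beta_n(ct)$ by \eqref{e3.3}, so the chain rule yields $\beta_n(ct)=2c\,h_n(t)$, $\beta_n'(ct)=2h_n'(t)$ and $\beta_n''(ct)=\tfrac{2}{c}h_n''(t)$. Feeding these into the rightmost equalities of \eqref{e4.3}--\eqref{e4.5}, and using $\tfrac cn=\tfrac{1}{\sqrt{2n}}=\tfrac1{2c}$, I obtain
\[
f_n(ct)=\tfrac{1}{2c}\tilde h_n(t),\qquad g_n(ct)=\tfrac{1}{2nc}h_n''(t),\qquad k_n(ct)=-\tfrac{1}{c}h_n'(t).
\]
Here the combination $\tilde h_n(t)=h_n(t)-th_n'(t)$ appears exactly because $f_n$ is built from $\beta_n(ct)-ct\,\beta_n'(ct)$.

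Finally I would assemble the pieces. Substituting the last display into the factorization $\zeta_n(cx,cy)=f_n(cx)f_n(cy)-g_n(cx)g_n(cy)-k_n(cx)k_n(cy)$ from Lemma~\ref{formel for zeta-n} pulls out a common factor $\tfrac{1}{4c^2}$ in front of the bracket $\tilde h_n(x)\tilde h_n(y)-4h_n'(x)h_n'(y)-\tfrac{1}{n^2}h_n''(x)h_n''(y)$; the coefficient $4$ comes from $k_n(ct)^2$ and the $\tfrac{1}{n^2}$ from $g_n(ct)^2$. Multiplying by $\tfrac n2$ as dictated by the first display and using $\tfrac n2\cdot\tfrac{1}{4c^2}=\tfrac14$ (since $c^2=\tfrac n2$) then delivers \eqref{e4.11}.

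I expect no conceptual difficulty: Lemma~\ref{formel for zeta-n} has already digested all the Hermite recursions, so what remains is a change of variables followed by careful bookkeeping. The only delicate point is keeping the powers of $c=\sqrt{n/2}$ produced by the chain rule in balance with the normalizing factor $\tfrac{1}{\sqrt{2n}}$ in $h_n$; the coefficient $4$ on $h_n'(x)h_n'(y)$ and the $\tfrac{1}{n^2}$ on $h_n''(x)h_n''(y)$ are precisely the residue of this accounting, so a misplaced factor of $c$ or $2$ there is the main risk.
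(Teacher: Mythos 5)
Your proposal is correct and follows essentially the same route as the paper's own proof: both reduce \eqref{e4.11} to Lemma~\ref{formel for zeta-n} via the identity $\rho_n(x,y)=\tfrac{n}{2}\zeta_n\big(\sqrt{\tfrac{n}{2}}x,\sqrt{\tfrac{n}{2}}y\big)$ and the relation $h_n(x)=\tfrac{1}{\sqrt{2n}}\beta_n\big(\sqrt{\tfrac{n}{2}}x\big)$, and your scaled expressions for $f_n$, $g_n$, $k_n$ agree exactly with those in the paper. All constants check out, so nothing further is needed.
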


\begin{proof} With $\zeta_n,f_n,g_n,k_n$ and $\beta_n$ as in Lemma~\ref{formel for
  zeta-n} we have that
\begin{equation}
\begin{split}
\rho_n(x,y)&=\tfrac{n}{2}
\zeta_n\big(\textstyle{\sqrt{\frac{n}{2}}x,\sqrt{\frac{n}{2}}y}\big)
\\[.2cm]
&=\tfrac{n}{2}\big(f_n\big(\textstyle{\sqrt{\tfrac{n}{2}}x}\big)
f_n\big(\textstyle{\sqrt{\tfrac{n}{2}}y}\big)
-g_n\big(\textstyle{\sqrt{\tfrac{n}{2}}x}\big)
g_n\big(\textstyle{\sqrt{\tfrac{n}{2}}y}\big)
-k_n\big(\textstyle{\sqrt{\tfrac{n}{2}}x}\big)
k_n\big(\textstyle{\sqrt{\tfrac{n}{2}}y}\big)\big),
\end{split}
\label{e4.9}
\end{equation}
and (cf.\ formula \eqref{e3.3})
\begin{equation}
h_n(x)=\tfrac{1}{\sqrt{2n}}\beta_n\big(\textstyle{\sqrt{\frac{n}{2}}x}\big).
\label{e4.10}
\end{equation}
Combining \eqref{e4.10} with the rightmost equalities in
\eqref{e4.3}-\eqref{e4.5}, we find that
\[
f_n\big(\textstyle{\sqrt{\tfrac{n}{2}}x}\big)
=\tfrac{1}{\sqrt{2n}}\tilde{h}_n(x),\quad
g_n\big(\textstyle{\sqrt{\tfrac{n}{2}}x}\big)
=\frac{1}{\sqrt{2}n^{3/2}}h_n''(x), \qand
k_n\big(\textstyle{\sqrt{\tfrac{n}{2}}x}\big)
=-\textstyle{\sqrt{\tfrac{2}{n}}}h_n'(x),
\]
and inserting these expressions into \eqref{e4.9}, formula
\eqref{e4.11} follows readily.
\end{proof}

By $C_b^{\infty}(\R^2)$ we denote the vector space of infinitely often
differentiable functions $f\colon\R^2\to\C$ satisfying that
\[
\|D_1^kD_2^lf\|_{\infty}:=\sup_{(x,y)\in\R^2}\big|D_1^kD_2^lf(x,y)\big|<\infty,
\]
for any $k,l$ in $\N_0$. Here $D_1$ and $D_2$ denote, respectively, the
partial derivatives of $f$ with respect to the first and the second
variable.

\begin{lemma}\label{diff af snitfunktion} Assume that
  $f\in C_b^{\infty}(\R^2)$ and consider the mapping
  $\phi_f\colon\R\to C_b^{\infty}(\R)$ given by
\[
\phi_f(x)=f(x,\cdot), \qquad(x\in\R).
\]
Then $\phi_f$ is infinitely often differentiable from $\R$ into
$C_b^{\infty}(\R)$, and for any $k$ in $\N$
\begin{equation}
\frac{\rd^k}{\rd x^k}\phi_f(x)=\big[D_1^kf\big](x,\cdot),
\qquad(x\in\R).
\label{e4.10a}
\end{equation}
\end{lemma}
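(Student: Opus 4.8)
The plan is to first confirm that $\phi_f$ genuinely lands in $C_b^\infty(\R)$, then reduce differentiability to a uniform-in-$y$ estimate on difference quotients, prove the case $k=1$, and finally obtain the general statement by induction. For the well-definedness, note that for fixed $x$ the function $y\mapsto f(x,y)$ is $C^\infty$ and each of its $y$-derivatives $y\mapsto[D_2^l f](x,y)$ is bounded by $\|D_2^l f\|_\infty<\infty$; hence $\phi_f(x)\in C_b^\infty(\R)$. Recall that $\phi_f$ is differentiable at $x$ with derivative $\psi\in C_b^\infty(\R)$ exactly when $\tfrac1h(\phi_f(x+h)-\phi_f(x))\to\psi$ in the Fr\'ech\'et topology of Definition~\ref{def frechet rum}, i.e.\ in every seminorm $\|\cdot\|_{(m)}$.

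For the base case I would fix $x\in\R$ and $m\in\N_0$. Since the $y$-derivatives commute with the $x$-difference quotient (by linearity) and with $D_1$ (by equality of mixed partials, as $f$ is $C^\infty$), for each $j\le m$ the $j$'th $y$-derivative of $\tfrac1h(\phi_f(x+h)-\phi_f(x))-[D_1f](x,\cdot)$ is the function
\[
y\longmapsto \frac{u(x+h,y)-u(x,y)}{h}-[D_1u](x,y),\qquad u:=D_2^j f .
\]
Writing $u(x+h,y)-u(x,y)=\int_0^h [D_1u](x+s,y)\6s$ and using once more that $[D_1u](x+s,y)-[D_1u](x,y)=\int_0^s[D_1^2u](x+\tau,y)\6\tau$, I would bound this expression in absolute value by $\sup_{|s|\le|h|}|[D_1u](x+s,y)-[D_1u](x,y)|\le\|D_1^2D_2^j f\|_\infty\,|h|$. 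Taking the supremum over $y\in\R$ and the maximum over $j\le m$ then yields
\[
\Big\|\tfrac1h(\phi_f(x+h)-\phi_f(x))-[D_1f](x,\cdot)\Big\|_{(m)}\le |h|\max_{0\le j\le m}\|D_1^2D_2^j f\|_\infty\xrightarrow[h\to0]{}0,
\]
which establishes that $\phi_f$ is differentiable with $\phi_f'(x)=[D_1f](x,\cdot)=\phi_{D_1f}(x)$.

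To pass to higher orders I would induct on $k$. Assuming $\phi_f$ is $k$ times differentiable with $\phi_f^{(k)}=\phi_{D_1^kf}$, observe that $f\in C_b^\infty(\R^2)$ forces $D_1^kf\in C_b^\infty(\R^2)$ as well; the base case applied to $D_1^kf$ then shows that $\phi_{D_1^kf}$ is differentiable with derivative $\phi_{D_1(D_1^kf)}=\phi_{D_1^{k+1}f}$. Hence $\phi_f^{(k+1)}(x)=[D_1^{k+1}f](x,\cdot)$, which completes the induction and gives \eqref{e4.10a} for all $k$.

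The only genuinely delicate point is the \emph{uniformity} of the convergence: the whole argument depends on controlling the difference quotient not merely pointwise in $y$, but simultaneously over all $y\in\R$ and all $y$-derivative orders $j\le m$. This is precisely where the defining property of $C_b^\infty(\R^2)$—boundedness of every mixed partial $D_1^kD_2^l f$ on all of $\R^2$—enters, through the finite bound $\|D_1^2D_2^j f\|_\infty<\infty$ that drives the estimate above.
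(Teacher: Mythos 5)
Your proof is correct and follows essentially the same route as the paper's: reduce to the case $k=1$ and then induct, and for $k=1$ control the difference quotient uniformly in $y$ and in the order $j\le m$ of the $y$-derivative via the finite bound $\|D_1^2D_2^jf\|_\infty$. The only (immaterial) difference is that you use the integral form of the first-order Taylor remainder, whereas the paper uses the Lagrange (mean-value) form --- which is why the paper first reduces to real-valued $f$, a step your version does not need.
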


\begin{proof} By splitting $f$ in its real- and imaginary parts, we may
assume that $f$ is real-valued. For any $k$ in $\N$ the function
$D_1^kf$ is again an element of $C^{\infty}(\R^2)$. Therefore, by
induction, it suffices to prove that $\phi_f$ is differentiable with
derivative given by \eqref{e4.10a} (in the case $k=1$). For this we need to
establish that
\[
\Big\|\frac{\phi_f(x+h)-\phi_f(x)}{h}-[D_1f](x,\cdot)\Big\|_{(m)}
\longrightarrow0,\quad\mbox{as $h\to0$}
\]
for any $m$ in $\N$ and any $x$ in $\R$. This amounts to showing that
for fixed $x$ in $\R$ and $l$ in $\N$ we have that
\[
\sup_{y\in\R}\Big|\frac{D_2^lf(x+h,y)-D_2^lf(x,y)}{h}-D_2^lD_1f(x,y)\Big|
\longrightarrow0,\quad\mbox{as $h\to0$}.
\]
For fixed $y$ in $\R$ second order Taylor expansion for the
function $[D_2^lf](\cdot,y)$ yields that
\[
D_2^lf(x+h,y)-D_2^lf(x,y)=D_1D_2^lf(x,y)h+
\tfrac{1}{2}D_1^2D_2^lf(\xi,y)h^2,
\]
for some real number $\xi=\xi(x,y,h)$ between $x+h$ and
$x$. Consequently,
\[
\sup_{y\in\R}\Big|\frac{D_2^lf(x+h,y)-D_2^lf(x,y)}{h}-D_2^lD_1f(x,y)\Big|
\le\frac{h}{2}\big\|D_1^2D_2^lf\big\|_{\infty}\longrightarrow0,
\quad\mbox{as $h\to0$},
\]
as desired.
\end{proof}

\begin{corollary}\label{diff T af snitfunktion}
Let $T$ be the linear mapping introduced in Remark~\ref{def af S og
  T}, and let $f$ be a function from $C_b^{\infty}(\R^2)$. We then have

\begin{itemize}

\item[(i)]
For any $j$ in $\N_0$ the mapping
\[
\psi_f\colon x\mapsto T^jf(x,\cdot)\colon\R\to C_b^{\infty}(\R)
\]
is infinitely often differentiable with derivatives given by
\begin{equation}
\frac{\rd^k}{\rd x^k}\psi_f(x)=T^j\big([D_1^kf](x,\cdot)\big).
\label{e4.10b}
\end{equation}

\item[(ii)] For any $j$ in $\N_0$ the mapping $\upsilon_j\colon\R\to\C$
  given by
\[
\upsilon_j(x)=\frac{1}{2\pi}\int_{-2}^2\big[T^jf(x,\cdot)\big](t)\sqrt{4-t^2}\6t,
\qquad(x\in\R),
\]
is a $C_b^{\infty}(\R)$-function. Moreover, for any $k$ in $\N$
\begin{equation}
\frac{\rd^k}{\rd x^k}\upsilon_j(x)
=\frac{1}{2\pi}\int_{-2}^2\big[T^j\big([D_1^k]f(x,\cdot)\big)\big](t)
\sqrt{4-t^2}\6t.
\label{e4.10c}
\end{equation}
\end{itemize}
\end{corollary}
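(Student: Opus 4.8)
The plan is to realize both mappings as composites of the vector-valued curve $\phi_f\colon x\mapsto f(x,\cdot)$ from Lemma~\ref{diff af snitfunktion} with a \emph{continuous linear} map on the Fr\'ech\'et space $C_b^{\infty}(\R)$, and then to invoke the elementary principle that a continuous linear map commutes with differentiation of a Fr\'ech\'et-space-valued function. This is precisely the device already used in the proof of Lemma~\ref{analytisk}(i) to propagate analyticity through $T^k$; here I only need its (weaker) differentiable version. Concretely, if $L$ is continuous and linear and $\gamma$ is differentiable into $C_b^{\infty}(\R)$, then writing $\frac{1}{h}\big(L\gamma(x+h)-L\gamma(x)\big)=L\big(\tfrac{1}{h}(\gamma(x+h)-\gamma(x))\big)$ by linearity and letting $h\to0$ by continuity of $L$ shows $L\gamma$ is differentiable with $(L\gamma)'=L(\gamma')$; iterating gives all higher derivatives.

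For (i), I observe that $\psi_f=T^j\circ\phi_f$. By Lemma~\ref{diff af snitfunktion}, $\phi_f$ is infinitely often differentiable with $\frac{\rd^k}{\rd x^k}\phi_f(x)=[D_1^kf](x,\cdot)$, and by Proposition~\ref{kontinuitet af S} the iterate $T^j$ is a continuous linear mapping of $C_b^{\infty}(\R)$ into itself. Applying the commutation principle above $k$ times gives that $\psi_f$ is infinitely often differentiable with $\frac{\rd^k}{\rd x^k}\psi_f(x)=T^j\big(\frac{\rd^k}{\rd x^k}\phi_f(x)\big)=T^j\big([D_1^kf](x,\cdot)\big)$, which is exactly \eqref{e4.10b}.

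For (ii), I first record that the functional $\Phi\colon g\mapsto\frac{1}{2\pi}\int_{-2}^2g(t)\sqrt{4-t^2}\6t$ is linear and, since $\int_{-2}^2\sqrt{4-t^2}\6t=2\pi$, satisfies $|\Phi(g)|\le\|g\|_{\infty}=\|g\|_{(0)}$; hence $\Phi$ is continuous on $C_b^{\infty}(\R)$. Since $\upsilon_j=\Phi\circ\psi_f$, the same commutation principle applied to this scalar-valued composite yields that $\upsilon_j$ is infinitely often differentiable with $\frac{\rd^k}{\rd x^k}\upsilon_j(x)=\Phi\big(\frac{\rd^k}{\rd x^k}\psi_f(x)\big)$, which after substituting \eqref{e4.10b} is precisely \eqref{e4.10c}. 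To conclude $\upsilon_j\in C_b^{\infty}(\R)$ I invoke the continuity estimate \eqref{e3.13a} of Lemma~\ref{kont i Frechet rum}: there are constants $C(j,0)>0$ and $m(j,0)\in\N$ with $\big|\frac{\rd^k}{\rd x^k}\upsilon_j(x)\big|\le\big\|T^j([D_1^kf](x,\cdot))\big\|_{\infty}\le C(j,0)\big\|[D_1^kf](x,\cdot)\big\|_{(m(j,0))}$, and $\big\|[D_1^kf](x,\cdot)\big\|_{(m)}\le\max_{0\le l\le m}\|D_1^kD_2^lf\|_{\infty}<\infty$ is finite and independent of $x$ because $f\in C_b^{\infty}(\R^2)$. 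Thus every derivative of $\upsilon_j$ is bounded on $\R$.

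The only point requiring care—rather than a genuine obstacle—is the commutation of a continuous linear map with the derivative of a vector-valued curve in the Fr\'ech\'et topology; but this reduces to continuity of $T^j$ (Proposition~\ref{kontinuitet af S}) and of $\Phi$, both of which are in hand, and the uniform-boundedness bookkeeping in (ii) is then immediate from the estimate \eqref{e3.13a} together with $f\in C_b^{\infty}(\R^2)$.
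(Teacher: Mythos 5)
Your proposal is correct and follows essentially the same route as the paper: both parts are obtained by composing the curve $\phi_f$ of Lemma~\ref{diff af snitfunktion} with the continuous linear maps $T^j$ (Proposition~\ref{kontinuitet af S}) and $g\mapsto\frac{1}{2\pi}\int_{-2}^2g(t)\sqrt{4-t^2}\,{\rm d}t$, using that continuous linear maps commute with differentiation of Fr\'ech\'et-space-valued curves. The only cosmetic difference is that you bound every derivative of $\upsilon_j$ directly via the estimate \eqref{e3.13a}, whereas the paper bounds only $\upsilon_j$ itself and lets the reduction to the case $k=1$ handle the higher derivatives; both are valid.
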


\begin{proof} (i) \ As in the proof of Lemma~\ref{diff af snitfunktion} it
suffices to prove that $\psi_f^j$ is differentiable with derivative
given by \eqref{e4.10b} (in the case $k=1$). But this follows
immediately from Lemma~\ref{diff af snitfunktion}, since
$\psi_f=T^j\circ\phi_j$, where $T\colon C_b^{\infty}(\R)\to C_b^{\infty}(\R)$
  is a linear, continuous mapping (cf.\ Proposition~\ref{kontinuitet
    af S}).

(ii) \ It suffices to prove that $\upsilon_j$ is bounded and
differentiable with derivative given by \eqref{e4.10c} (in the case
$k=1$). To prove that $\upsilon_j$ is differentiable with the prescribed
derivative, it suffices, in view of (i), to establish
that the mapping
\[
g\mapsto\frac{1}{2\pi}\int_{-2}^2g(t)\sqrt{4-t^2}\6t\colon
C_b^{\infty}(\R)\to\R
\]
is linear and continuous. It is clearly linear, and since
\[
\Big|\frac{1}{2\pi}\int_{-2}^2g(t)\sqrt{4-t^2}\6t\Big|\le\|g\|_{\infty},
\qquad(g\in C_b^{\infty}(\R)),
\]
it is also continuous. To see finally that $\upsilon_j$ is a bounded
mapping, we note that since $T^j\colon C_b^{\infty}(\R)\to
C_b^{\infty}(\R)$ is continuous, there are (cf.\ Lemma~\ref{kont i
  Frechet rum}) constants $C$ from $(0,\infty)$ and $m$ in $\N$, such
that 
\[
\big\|T^jf(x,\cdot)\big\|_{\infty}\le
C\max_{l=1,\ldots,m}\big\|D_2^lf(x,\cdot)\big\|_{\infty}
\le C\max_{l=1,\ldots,m}\big\|D_2^lf\big\|_{\infty}
\]
for any $x$ in $\R$. Therefore,
\[
\sup_{x\in\R}\big|\upsilon_j(x)\big|\le
\sup_{x\in\R}\big\|T^jf(x,\cdot)\big\|_{\infty}
\le C\max_{l=1,\ldots,m}\big\|D_2^lf\big\|_{\infty}<\infty,
\]
since $f\in C_b^{\infty}(\R^2)$.
\end{proof}

\begin{proposition}\label{2-dim exp I}
For any function $f$ in $C_b^{\infty}(\R^2)$ there exists a sequence
$(\beta_j(f))_{j\in\N_0}$ of complex numbers such that 
\[
\int_{\R^2}f(x,y)h_n(x)h_n(y)\6x\6y
=\sum_{j=0}^k\frac{\beta_j(f)}{n^{2j}}+O(n^{-2k-2})
\]
for any $k$ in $\N_0$.
\end{proposition}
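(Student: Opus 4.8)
The plan is to reduce the two-dimensional statement to a two-fold application of the one-dimensional expansion in Corollary~\ref{asymptotisk udv.}, one for each variable. Since $f\in C_b^\infty(\R^2)$ is bounded and $h_n(x)\6x$ is a probability measure, the integrand $f(x,y)h_n(x)h_n(y)$ is integrable on $\R^2$, so Fubini's theorem lets me write
\[
\int_{\R^2}f(x,y)h_n(x)h_n(y)\6x\6y
=\int_{\R}\Big(\int_{\R}f(x,y)h_n(y)\6y\Big)h_n(x)\6x.
\]
For each fixed $x$ the function $f(x,\cdot)$ lies in $C_b^\infty(\R)$, so I would apply Corollary~\ref{asymptotisk udv.} to the inner integral, but carried out to order $k+1$ rather than $k$, obtaining
\[
\int_{\R}f(x,y)h_n(y)\6y
=\sum_{j=0}^{k}\frac{\upsilon_j(x)}{n^{2j}}
+\frac{1}{n^{2k+2}}\int_{\R}\big[T^{k+1}f(x,\cdot)\big](t)h_n(t)\6t,
\]
where $\upsilon_j(x)=\frac{1}{2\pi}\int_{-2}^2[T^jf(x,\cdot)](t)\sqrt{4-t^2}\6t$ is exactly the function appearing in Corollary~\ref{diff T af snitfunktion}(ii).

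Integrating this identity against $h_n(x)\6x$ splits the problem into two parts. The remainder term contributes $\frac{1}{n^{2k+2}}\int_{\R^2}\big[T^{k+1}f(x,\cdot)\big](t)h_n(t)h_n(x)\6t\6x$; since the proof of Corollary~\ref{diff T af snitfunktion}(ii) provides the uniform bound $\sup_{x\in\R}\|T^{k+1}f(x,\cdot)\|_\infty<\infty$ and $h_n\otimes h_n$ is a probability measure, this whole term is $O(n^{-2k-2})$. For the main part I use that each $\upsilon_j$ is itself a function in $C_b^\infty(\R)$, again by Corollary~\ref{diff T af snitfunktion}(ii), so I may apply Corollary~\ref{asymptotisk udv.} a second time, now in the variable $x$, to $\upsilon_j$ and to order $k-j+1$:
\[
\int_{\R}\upsilon_j(x)h_n(x)\6x
=\sum_{i=0}^{k-j}\frac{1}{n^{2i}}\Big(\frac{1}{2\pi}\int_{-2}^2[T^i\upsilon_j](t)\sqrt{4-t^2}\6t\Big)
+O(n^{-2(k-j+1)}).
\]
Multiplying by $n^{-2j}$ turns each of these error terms into $O(n^{-2k-2})$, so all the remainders can be absorbed.

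It then remains to collect the finitely many main terms. Writing $m=i+j$ and summing over the triangle $\{(i,j):0\le j\le m\le k\}$, the coefficient of $n^{-2m}$ becomes
\[
\beta_m(f)=\sum_{j=0}^{m}\frac{1}{2\pi}\int_{-2}^2\big[T^{m-j}\upsilon_j\big](t)\sqrt{4-t^2}\6t,
\]
which depends only on indices up to $m$ and is therefore independent of the cutoff $k$. This yields the asserted expansion. The only genuinely delicate points are the two appeals to Corollary~\ref{diff T af snitfunktion}: that the intermediate functions $\upsilon_j$ really are bounded smooth functions (legitimizing the second application of the one-dimensional expansion) and that the final double integral against $h_n\otimes h_n$ is bounded uniformly in $n$. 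Both follow from the uniform estimate $\sup_{x}\|T^{k+1}f(x,\cdot)\|_\infty<\infty$ established there, so no new analytic input is required and the remainder of the argument is bookkeeping of powers of $n^{-2}$.
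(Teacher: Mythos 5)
Your proposal is correct and follows essentially the same route as the paper's own proof: Fubini, a first application of Corollary~\ref{asymptotisk udv.} in the $y$-variable producing the functions $\upsilon_j$, the uniform bound $\sup_x\|T^{k+1}f(x,\cdot)\|_\infty<\infty$ from Corollary~\ref{diff T af snitfunktion}(ii) to control the remainder, a second application of the one-dimensional expansion to each $\upsilon_j\in C_b^\infty(\R)$, and a final collection of the $n^{-2m}$-terms. Your additional observation that the resulting coefficients $\beta_m(f)$ do not depend on the cutoff $k$ is a worthwhile point that the paper leaves implicit.
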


\begin{proof} Let $k$ in $\N_0$ be given.
For fixed $x$ in $\R$ the function $f(x,\cdot)$ belongs
to $C_b^{\infty}(\R^2)$ and hence Corollary~\ref{asymptotisk udv.}
asserts that
\begin{equation}
\int_{\R}f(x,y)h_n(y)\6y
=\sum_{j=1}^k\frac{\upsilon_j(x)}{n^{2j}}
+\frac{1}{n^{2k+2}}\int_{\R}\big[T^{k+1}f(x,\cdot)\big](t)h_n(t)\6t,
\label{e4.10d}
\end{equation}
where the functions $\upsilon_j\colon\R\to\R$ are given by
\[
\upsilon_j(x)=\frac{1}{2\pi}\int_{-2}^2\big[T^jf(x,\cdot)\big](t)
\sqrt{4-t^2}\6t, \qquad(x\in\R,\ j=1,\ldots,k).
\]
As noted in the proof of Corollary~\ref{diff T af snitfunktion},
there exist constants $C$ from $(0,\infty)$ and $m$ in $\N$, such
that
\[
\big\|T^{k+1}f(x,\cdot)\big\|_{\infty}
\le C\max_{l=1,\ldots,m}\big\|D_2^lf\big\|_{\infty}, \quad(x\in\R).
\]
Hence, since $h_n$ is a probability density, 
\[
C_k^f:=\sup_{x\in\R}\Big|\int_{\R}\big[T^{k+1}f(x,\cdot)\big](t)h_n(t)\6t\Big|
\le C\max_{l=1,\ldots,m}\big\|D_2^lf\big\|_{\infty}<\infty.
\]
Using now Fubini's Theorem and \eqref{e4.10d} we find that
\begin{equation}
\begin{split}
\int_{\R^2}f(x,y)h_n(x)h_n(y)\6x\6y
&=\int_{\R}\Big(\int_{\R}f(x,y)h_n(y)\6y\Big)h_n(x)\6x\\[.2cm]
&=\sum_{j=0}^kn^{-2j}\int_{\R}\upsilon_j(x)h_n(x)\6x+O(n^{-2k-2}),
\end{split}
\label{e4.10e}
\end{equation}
where the $O(n^{-2k-2})$-term is bounded by $C_k^fn^{-2k-2}$.
According to Corollary~\ref{diff T af snitfunktion}(ii), $\upsilon_j\in
C_b^{\infty}(\R)$ for each $j$ in $\{0,1,\ldots,k\}$, and hence
another application of Corollary~\ref{asymptotisk udv.} yields that
\[
\int_{\R}\upsilon_j(x)h_n(x)\6x
=\sum_{l=0}^{k-j}\frac{\xi_l^j(f)}{n^{2l}}+O(n^{-2k+2j-2}),
\]
for suitable complex numbers
$\xi_0^j(f),\ldots,\xi_{k-j}^j(f)$. Inserting these expressions into
\eqref{e4.10e} we find that
\begin{equation*}\begin{split}
\int_{\R^2}f(x,y)h_n(x)h_n(y)\6x\6y
&=\sum_{j=0}^k\Big(\sum_{l=0}^{k-j}\frac{\xi_l^j(f)}{n^{2(l+j)}}
+O(n^{-2k-2})\Big)+O(n^{-2k-2})\\[.2cm]
&=\sum_{r=0}^kn^{-2r}\Big(\sum_{j=0}^r\xi_{r-j}^j(f)\Big)+O(n^{-2k-2}).
\end{split}\end{equation*}
Thus, setting $\beta_r(f)=\sum_{j=0}^r\xi_{r-j}^j(f)$,
$r=0,1,\ldots,k$, we have obtained the desired expansion.
\end{proof}

For the proof of Theorem~\ref{2-dim exp III} below we need to extend
the asymptotic expansion in Proposition~\ref{2-dim exp I} to a larger
class of functions than $C_b^{\infty}(\R^2)$.

\begin{proposition}\label{2-dim exp II}
Assume that $f\colon\R^2\to\C$ is infinitely often differentiable, and
polynomially bounded in the sense that 
\[
|f(x,y)|\le C(1+x^2+y^2)^m, \qquad((x,y)\in\R^2)
\]
for suitable constants $C$ from $(0,\infty)$ and $m$ in $\N_0$.
Then there exists a sequence $(\beta_j(f))_{j\in\N_0}$ of complex
numbers, such that
\[
\int_{\R^2}f(x,y)h_n(x)h_n(y)\6x\6y
=\sum_{j=0}^k\frac{\beta_j(f)}{n^{2j}}+O(n^{-2k-2})
\]
for any $k$ in $\N_0$.
\end{proposition}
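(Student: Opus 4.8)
The plan is to reduce to the bounded case already treated in Proposition~\ref{2-dim exp I} by a smooth cut-off, and then to show that the discarded part contributes nothing to the expansion because $h_n$ is super-polynomially small outside $[-2,2]$. Concretely, I would fix a number $R>2$ and choose $\chi\in C_c^{\infty}(\R^2)$ with $0\le\chi\le1$ and $\chi\equiv1$ on $[-R,R]^2$, and write $f=f\chi+f(1-\chi)$. Since $\chi$ has compact support and $f$ is $C^{\infty}$, the product $f\chi$ together with all its partial derivatives is bounded, so $f\chi\in C_b^{\infty}(\R^2)$; note that no control on the growth of the \emph{derivatives} of $f$ is needed here, which is exactly what makes the cut-off convenient. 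Proposition~\ref{2-dim exp I} then applies to $f\chi$ and gives, for every $k$,
\[
\int_{\R^2}f(x,y)\chi(x,y)h_n(x)h_n(y)\6x\6y
=\sum_{j=0}^k\frac{\beta_j(f\chi)}{n^{2j}}+O(n^{-2k-2}).
\]

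Next I would dispose of the remainder $\int_{\R^2}f(1-\chi)h_nh_n$. Its integrand is supported on $\{|x|>R\}\cup\{|y|>R\}$, and by hypothesis $|f(x,y)(1-\chi(x,y))|\le C(1+x^2+y^2)^m\le C'(1+x^2)^m(1+y^2)^m$. Splitting the region of integration along the union and using Tonelli's theorem, the remainder is bounded by
\[
C'\Big(\int_{|x|>R}(1+x^2)^mh_n(x)\6x\Big)\Big(\int_{\R}(1+y^2)^mh_n(y)\6y\Big)+(x\leftrightarrow y).
\]
The second factor is bounded uniformly in $n$: expanding $(1+y^2)^m$ and using $\int_{\R}y^{2p}h_n(y)\6y=\E\{\tr_n(X_n^{2p})\}\le(2p-1)!!$ (which follows from \eqref{eq0.4} with $r=1$, each of its $(2p-1)!!$ summands $n^{-2\sigma(\gamma)}$ being at most $1$) yields a finite constant $B_m=\sup_n\int_{\R}(1+y^2)^mh_n(y)\6y<\infty$. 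Thus the whole remainder is controlled by $2C'B_m\int_{|x|>R}(1+x^2)^mh_n(x)\6x$, and everything reduces to estimating this one-dimensional tail integral.

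The crux, and the main obstacle, is therefore to prove that for every fixed $R>2$ and every $p\in\N_0$,
\[
\int_{|x|>R}|x|^{2p}h_n(x)\6x=O(n^{-N})\quad\text{for all }N\in\N .
\]
This is a confinement statement: eigenvalues of $\GUE(n,\tfrac1n)$ overshoot $R>2$ with super-polynomially small probability, and it is genuinely an $n\to\infty$ phenomenon (the uniform moment bound used above is by itself too crude). I would obtain it by a Chebyshev estimate in which the auxiliary exponent grows with $n$:
\[
\int_{|x|>R}|x|^{2p}h_n(x)\6x\le R^{2p-2q}\int_{\R}x^{2q}h_n(x)\6x
=R^{2p-2q}\,\E\{\tr_n(X_n^{2q})\}.
\]
Feeding in a Füredi--Komlós type moment bound $\E\{\tr_n(X_n^{2q})\}\le2\cdot4^{q}$, valid for $q\le\varepsilon n^{2/3}$ (obtained from the pairing expansion \eqref{eq0.4} by bounding the number of pairings of fixed genus), and taking $q=\lfloor\varepsilon n^{2/3}\rfloor$, the bound becomes
\[
2R^{2p}\Big(\frac{4}{R^2}\Big)^{q}=2R^{2p}\exp\!\big(-c\,n^{2/3}\big),
\]
with $c>0$ since $R>2$, which decays faster than any power of $n$. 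Alternatively, and more in the spirit of this paper, I would rewrite $\int_{|x|>R}h_n$ as $\tfrac1n\sum_{k=0}^{n-1}\int_{u>\sqrt{n/2}\,R}\varphi_k(u)^2\6u$ and invoke the exponential decay of the Hermite functions in the classically forbidden region $u^2>2k+1$ (note $(\sqrt{n/2}\,R)^2=\tfrac{R^2}{2}n>2n>2k+1$ for all $k\le n-1$, since $R>2$). Either way, once this tail bound is in hand the remainder $\int_{\R^2}f(1-\chi)h_nh_n$ is $O(n^{-N})$ for every $N$ and is absorbed into the error term; by uniqueness of asymptotic expansions the coefficients $\beta_j(f):=\beta_j(f\chi)$ do not depend on the choice of $\chi$, and this completes the proof.
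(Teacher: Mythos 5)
Your proof has the same skeleton as the paper's: a smooth cut-off $f=f\chi+f(1-\chi)$, with the compactly supported part handled by Proposition~\ref{2-dim exp I} and the remainder shown to be $O(n^{-N})$ for every $N$ by reducing it, via a product bound on $(1+x^2+y^2)^m$ and Tonelli, to a one-dimensional tail integral $\int_{|x|>R}|x|^{2p}h_n(x)\6x$. The only genuine divergence is in how that tail is killed. The paper writes it as $\E\{\tr_n[X_n^{2p}1_{(3,\infty)}(X_n)]\}$, dominates this by $\E\{\|X_n\|^{2p}1_{(3,\infty)}(\|X_n\|)\}$ and quotes the largest-eigenvalue deviation estimate of \cite[Proposition~6.4]{hst}, which gives the bound $\gamma(2p)\,n\,\e^{-n/2}$. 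You instead run a Chebyshev argument with an exponent $q\sim\varepsilon n^{2/3}$ growing with $n$, fed by a F\"uredi--Koml\'os-type bound $\E\{\tr_n(X_n^{2q})\}\le 2\cdot 4^q$; since $R>2$ this yields $O(\e^{-cn^{2/3}})$, which is just as good for the purpose at hand. That moment bound is correct and is essentially what the genus expansion \eqref{eq0.4} gives (a bound of the shape $\E\{\tr_n(X_n^{2q})\}\le C_q\,\e^{q^3/n^2}$ with $C_q$ the $q$'th Catalan number, as in \cite{ht1}), but as stated it is the one ingredient you would still need to prove or cite precisely; the same caveat applies to your alternative route via the decay of Hermite functions in the classically forbidden region, which is standard but is asserted rather than estimated. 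Your two side remarks --- that only $f$ itself, and not its derivatives, needs to be polynomially bounded, and that the coefficients $\beta_j(f)$ are independent of the cut-off by uniqueness of asymptotic expansions --- are both correct and worth keeping; the second point is left implicit in the paper.
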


\begin{proof} We start by choosing a function $\phi$ from
$C^{\infty}_c(\R^2)$, satisfying that

\begin{itemize}

\item $\phi(x,y)\in[0,1]$ for all $(x,y)$ in $\R^2$.

\item $\supp(f)\subseteq[-4,4]\times[-4,4]$.

\item $\phi\equiv1$ on $[-3,3]\times[-3,3]$.

\end{itemize}

We then write $f=f\phi+f(1-\phi)$. Since $f\phi\in
C_c^{\infty}(\R^2)\subseteq C_b^{\infty}(\R^2)$, it follows from
Proposition~\ref{2-dim exp I} that there exists a sequence
$(\beta_j(f))_{j\in\N_0}$ of complex numbers, such that
\[
\int_{\R^2}f(x,y)\phi(x,y)h_n(x)h_n(y)\6x\6y
=\sum_{j=0}^k\frac{\beta_j(f)}{n^{2j}}+O(n^{-2k-2})
\]
for any $k$ in $\N_0$. Therefore, it suffices to establish that
\[
\int_{\R^2}f(x,y)(1-\phi(x,y))h_n(x)h_n(y)\6x\6y=O(n^{-2k-2})
\]
for any $k$ in $\N_0$. Note here that $(1-\phi)\equiv0$ on
$[-3,3]\times[-3,3]$, and that for some positive constant $C'$ we have that
\[
\big|f(x,y)(1-\phi(x,y))\big|\le C(1+x^2+y^2)^m\le
C'(x^{2m}+y^{2m})\le C'x^{2m}y^{2m},
\]
for all $(x,y)$ outside $[-3,3]\times[-3,3]$. Therefore,
\begin{equation*}\begin{split}
\int_{\R^2}f(x,y)(1-\phi(x,y))h_n(x)h_n(y)\6x\6y
&\le
C'\int_{\R^2\setminus[-3,3]\times[-3,3]}x^{2m}y^{2m}h_n(x)h_n(y)\6x\6y\\[.2cm]
&\le 4C'\int_{\R}\int_3^{\infty}x^{2m}y^{2m}h_n(x)h_n(y)\6y\6x\\[.2cm]
&=4C'\Big(\int_{\R}x^{2m}h_n(x)\6x\Big)\Big(\int_3^{\infty}y^{2m}h_n(y)\6y\Big),
\end{split}\end{equation*}
where the second estimate uses symmetry of the function $(x,y)\mapsto
x^{2m}y^{2m}h_n(x)h_n(y)$. By Wigner's semi-circle law (for moments)
\[
\lim_{n\to\infty}\int_{\R}x^{2m}h_n(x)\6x
=\frac{1}{2\pi}\int_{-2}^2x^{2m}\sqrt{4-x^2}\6x,
\]
and therefore it now suffices to show that
\begin{equation}
\int_3^{\infty}y^{2m}h_n(y)\6y=O(n^{-2k-2}) \qquad\mbox{for any $k$
  in $\N_0$}.
\label{e4.10f}
\end{equation}
Recall here that $h_n$ is the spectral density of a $\GUE(n,\frac{1}{n})$
random matrix $X_n$, so that
\begin{equation*}\begin{split}
\int_3^{\infty}y^{2m}h_n(y)\6y=\E\big\{\tr_n
\big[(X_n)^{2m}1_{(3,\infty)}(X_n)\big]\big\} 
&=\frac{1}{n}\E\Big\{\sum_{j=1}^n(\lambda_j^{(n)})^{2m}
1_{(3,\infty)}(\lambda_j^{(n)})\Big\},
\end{split}\end{equation*}
where $\lambda_1^{(n)}\le\lambda_2^{(n)}\le\cdots\le\lambda_n^{(n)}$
are the ordered (random) eigenvalues of $X_n$. Since the function
$y\mapsto y^{2m}1_{(3,\infty)}(y)$ is non-decreasing on $\R$, it follows
that
\[
\frac{1}{n}\sum_{j=1}^n(\lambda_j^{(n)})^{2m}1_{(3,\infty)}(\lambda_j^{(n)})
\le(\lambda_n^{(n)})^{2m}1_{(3,\infty)}(\lambda_n^{(n)})
\le\|X_n\|^{2m}1_{(3,\infty)}\big(\|X_n\|\big).
\]
Using \cite[Proposition~6.4]{hst} it thus follows that
\[
\int_3^{\infty}y^{2m}h_n(y)\6y\le
\E\big\{\|X_n\|^{2m}1_{(3,\infty)}\big(\|X_n\|\big)\big\}
\le \gamma(2m)n\e^{-n/2},
\]
for a suitable positive constant $\gamma(2m)$ (not depending on
$n$). This clearly implies \eqref{e4.10f}, and the proof is completed.
\end{proof}

\begin{theorem}\label{2-dim exp III}
Let $\rho_n$ be the kernel given by \eqref{e4.2}.
Then for any function $f$ in $C_b^{\infty}(\R^2)$ there exists a
sequence $(\beta_j(f))_{j\in\N_0}$ of complex numbers such that 
\[
\int_{\R^2}f(x,y)\rho_n(x,y)\6x\6y
=\sum_{j=0}^k\frac{\beta_j(f)}{n^{2j}}+O(n^{-2k-2})
\]
for any $k$ in $\N_0$.
\end{theorem}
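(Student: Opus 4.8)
The plan is to substitute the explicit formula for $\rho_n$ furnished by Proposition~\ref{formel for rho-n} and reduce everything to integrals of the type already treated in Proposition~\ref{2-dim exp II}. Writing out that formula, we obtain
\[
\int_{\R^2}f(x,y)\rho_n(x,y)\6x\6y
=\tfrac{1}{4}I_1-I_2-\tfrac{1}{4n^2}I_3,
\]
where
\[
I_1=\int_{\R^2}f(x,y)\tilde{h}_n(x)\tilde{h}_n(y)\6x\6y,\quad
I_2=\int_{\R^2}f(x,y)h_n'(x)h_n'(y)\6x\6y,\quad
I_3=\int_{\R^2}f(x,y)h_n''(x)h_n''(y)\6x\6y.
\]
The strategy in each case is to move the derivatives (and the factors of $x,y$ hidden inside $\tilde{h}_n$) off the spectral density and onto $f$ by integrating by parts in each variable separately, thereby converting $I_1,I_2,I_3$ into integrals against $h_n(x)h_n(y)$.

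For $I_2$ and $I_3$ the integration by parts is immediate: since $h_n$ and all its derivatives decay faster than any polynomial (being built from Hermite functions), while $f$ and its derivatives are bounded, every boundary term vanishes, and we find
\[
I_2=\int_{\R^2}[D_1D_2f](x,y)h_n(x)h_n(y)\6x\6y,\qquad
I_3=\int_{\R^2}[D_1^2D_2^2f](x,y)h_n(x)h_n(y)\6x\6y.
\]
For $I_1$, recall that $\tilde{h}_n(x)=h_n(x)-xh_n'(x)$; integrating the term carrying $h_n'$ by parts shows that for any $g$ with $g,g'$ bounded one has $\int_{\R}g(x)\tilde{h}_n(x)\6x=\int_{\R}[2g(x)+xg'(x)]h_n(x)\6x$. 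Applying this in each variable gives
\[
I_1=\int_{\R^2}[(2+xD_1)(2+yD_2)f](x,y)\,h_n(x)h_n(y)\6x\6y,
\]
and $(2+xD_1)(2+yD_2)f=4f+2xD_1f+2yD_2f+xyD_1D_2f$.

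It now remains to observe that each of the three integrands produced is infinitely differentiable and polynomially bounded: $D_1D_2f$ and $D_1^2D_2^2f$ are in fact bounded, while $(2+xD_1)(2+yD_2)f$ is a sum of bounded derivatives of $f$ multiplied by the polynomials $1,x,y,xy$, so that $|(2+xD_1)(2+yD_2)f(x,y)|\le C(1+x^2+y^2)$. Hence Proposition~\ref{2-dim exp II} applies to each of $I_1,I_2,I_3$ and yields, for every $k$, expansions $I_i=\sum_{j=0}^{k}a_j^{(i)}n^{-2j}+O(n^{-2k-2})$ with coefficients depending only on $f$. Substituting these into the decomposition above and absorbing the factor $n^{-2}$ in front of $I_3$ (which merely shifts its expansion up by one order in $n^{-2}$) produces the asserted expansion, with $\beta_j(f)=\tfrac14 a_j^{(1)}-a_j^{(2)}-\tfrac14 a_{j-1}^{(3)}$ (reading $a_{-1}^{(3)}:=0$). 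The only genuinely non-routine point is that, after the integrations by parts, the integrand for $I_1$ lands in the \emph{polynomially bounded} class rather than in $C_b^{\infty}(\R^2)$ — which is precisely why the sharper Proposition~\ref{2-dim exp II} is needed in place of Proposition~\ref{2-dim exp I}; the vanishing of the boundary terms, ensured by the super-polynomial decay of $h_n$ and its derivatives, is then the only remaining thing to check, and it presents no real obstacle.
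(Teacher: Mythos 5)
Your proposal is correct and follows essentially the same route as the paper: the same decomposition of $\rho_n$ via Proposition~\ref{formel for rho-n}, the same integrations by parts producing $D_1D_2f$, $D_1^2D_2^2f$ and $4f+2xD_1f+2yD_2f+xyD_1D_2f$, and the same observation that the $\tilde h_n$--term forces the polynomially bounded class of Proposition~\ref{2-dim exp II}. The only (immaterial) difference is that you invoke Proposition~\ref{2-dim exp II} for all three integrals, whereas the paper uses Proposition~\ref{2-dim exp I} for the two that stay in $C_b^{\infty}(\R^2)$.
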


\begin{proof} Using Proposition~\ref{formel for rho-n} we have that
\begin{equation}
\begin{split}
\int_{\R^2}f(x,y)\rho_n(x,y)\6x\6y
=\frac{1}{4}\int_{\R^2}f(x,y)\tilde{h}_n(x)\tilde{h}_n(y)\6x\6y
-&\int_{\R^2}f(x,y)h_n'(x)h_n'(y)\6x\6y\\
-\frac{1}{4n^{2}}&\int_{\R^2}f(x,y)h_n''(x)h_n''(y)\6x\6y,
\end{split}
\label{a4.1}
\end{equation}
and it suffices then to establish asymptotic expansions of the type
set out in the theorem for each of the integrals appearing on the
right hand side. 

By Fubini's Theorem and integration by parts, it follows that
\begin{equation}
\int_{\R^2}f(x,y)h_n'(x)h_n'(y)\6x\6y
=\int_{\R^2}\frac{\partial^2}{\partial x\partial
  y}f(x,y)h_n(x)h_n(y)\6x\6y,
\label{a4.2}
\end{equation}
and since $\frac{\partial^2}{\partial x\partial y}f(x,y)\in
C_b^{\infty}(\R^2)$, Proposition~\ref{2-dim exp I} yields an
asymptotic expansion of the desired kind for this integral. Similarly
\begin{equation}
\int_{\R^2}f(x,y)h_n''(x)h_n''(y)\6x\6y
=\int_{\R^2}\frac{\partial^4}{\partial x^2\partial y^2}f(x,y)
h_n(x)h_n(y)\6x\6y,
\label{a4.3}
\end{equation}
where $\frac{\partial^4}{\partial x^2\partial y^2}f(x,y)\in
C_b^{\infty}(\R^2)$, and another application of Proposition~\ref{2-dim
  exp I} yields the desired asymptotic expansion. Finally, using again
Fubini's Theorem and integration by parts,
\begin{equation}
\begin{split}
&\int_{\R^2}f(x,y)\tilde{h}_n(x)\tilde{h}_n(y)\6x\6y\\[.2cm]
&=\int_{\R^2}f(x,y)(h_n(x)-xh_n'(x))(h_n(y)-yh_n'(y))\6x\6y
\\[.2cm]
&=\int_{\R^2}f(x,y)\big[h_n(x)h_n(y)-xh_n'(x)h_n(y)-yh_n'(y)h_n(x)
+xyh_n'(x)h_n'(y)\big]\6x\6y\\[.2cm]
&=\int_{\R^2}\big[f(x,y)+\tfrac{\partial}{\partial x}(xf(x,y))
+\tfrac{\partial}{\partial y}(yf(x,y))
+\tfrac{\partial^2}{\partial x\partial y}(xyf(x,y))\big]
h_n(x)h_n(y)\6x\6y\\[.2cm]
&=\int_{\R^2}\big[4f(x,y)+2x\tfrac{\partial}{\partial x}f(x,y)
+2y\tfrac{\partial}{\partial y}f(x,y)
+xy\tfrac{\partial^2}{\partial x\partial y}f(x,y)\big]
h_n(x)h_n(y)\6x\6y.
\end{split}
\label{a4.4}
\end{equation}
In the latter integral, the function inside the brackets
is clearly a polynomially bounded $C^{\infty}$-function on $\R^2$, and
hence Proposition~\ref{2-dim exp II} provides an asymptotic expansion
of the desired kind. This completes the proof.
\end{proof}

\begin{corollary}\label{2-dim exp IV}
For any functions $f,g$ in $C^{\infty}_b(\R)$, there exists a sequence
$(\beta_j(f,g))_{j\in\N}$ of complex numbers, such that for any $k$ in $\N_0$
\begin{equation}
\begin{split}
\Cov\big\{\Tr_n[f(X_n)],\Tr_n[g(X_n)]\big\}
&=\int_{\R^2}\Big(\frac{f(x)-f(y)}{x-y}\Big)
\Big(\frac{g(x)-g(y)}{x-y}\Big)\rho_n(x,y)\6x\6y\\[.2cm]
&=\sum_{j=0}^k\frac{\beta_j(f,g)}{n^{2j}}+O(n^{-2k-2}).
\label{e4.10g}
\end{split}
\end{equation}
\end{corollary}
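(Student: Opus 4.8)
The plan is to reduce both equalities in \eqref{e4.10g} to results already in hand. The first equality is precisely the content of Corollary~\ref{Pastur for covarians}(ii), so no new work is required there. For the second equality, the idea is to introduce the function
\[
F(x,y)=\Big(\frac{f(x)-f(y)}{x-y}\Big)\Big(\frac{g(x)-g(y)}{x-y}\Big),
\qquad((x,y)\in\R^2),
\]
and simply to apply Theorem~\ref{2-dim exp III} to $F$, setting $\beta_j(f,g)=\beta_j(F)$. The entire argument then hinges on verifying the single hypothesis of that theorem, namely that $F\in C_b^{\infty}(\R^2)$.

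The main obstacle---indeed the only non-formal point---is to show that the divided difference $\Phi_f(x,y):=\frac{f(x)-f(y)}{x-y}$ lies in $C_b^{\infty}(\R^2)$, since a priori this expression is only defined off the diagonal $\{x=y\}$. My approach is to use the integral representation
\[
\Phi_f(x,y)=\int_0^1 f'\big(tx+(1-t)y\big)\6t,
\]
valid for \emph{all} $(x,y)$ in $\R^2$ (on the diagonal it equals $f'(x)$). Since $f\in C_b^{\infty}(\R)$, the usual theorem on differentiation under the integral sign shows that $\Phi_f$ is infinitely often differentiable with
\[
D_1^kD_2^l\Phi_f(x,y)=\int_0^1 t^k(1-t)^l\,f^{(k+l+1)}\big(tx+(1-t)y\big)\6t,
\]
and hence $\|D_1^kD_2^l\Phi_f\|_{\infty}\le\|f^{(k+l+1)}\|_{\infty}\int_0^1 t^k(1-t)^l\6t\le\|f^{(k+l+1)}\|_{\infty}<\infty$ for all $k,l$ in $\N_0$. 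Thus $\Phi_f\in C_b^{\infty}(\R^2)$, and identically $\Phi_g\in C_b^{\infty}(\R^2)$.

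Finally, since $C_b^{\infty}(\R^2)$ is closed under multiplication (by the Leibniz rule every partial derivative of a product is a finite sum of products of bounded functions, hence bounded), the product $F=\Phi_f\Phi_g$ again belongs to $C_b^{\infty}(\R^2)$. Theorem~\ref{2-dim exp III} then applies verbatim to $F$ and yields the asymptotic expansion asserted in \eqref{e4.10g}. I expect the only delicate step to be the justification of differentiation under the integral sign defining $\Phi_f$, which is routine given the uniform boundedness of all derivatives of $f$; everything else is purely formal bookkeeping.
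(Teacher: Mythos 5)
Your proposal is correct and follows essentially the same route as the paper: the first equality is quoted from Corollary~\ref{Pastur for covarians}(ii), and the second is reduced to Theorem~\ref{2-dim exp III} by showing via the integral representation $\int_0^1 f'(sx+(1-s)y)\6s$ and differentiation under the integral sign that the divided difference lies in $C_b^{\infty}(\R^2)$. Your explicit remark that $C_b^{\infty}(\R^2)$ is closed under products is a small point the paper leaves implicit, but otherwise the arguments coincide.
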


\begin{proof} The first equality in \eqref{e4.10g} was established in
Proposition~\ref{Pastur for covarians}(ii). Appealing then to
Theorem~\ref{2-dim exp III}, the existence of a sequence
$(\beta_j(f,g))_{j\in\N_0}$ satisfying the second equality will follow, if we
establish that the function
\[
\Delta f(x,y)=
\begin{cases}
\frac{f(x)-f(y)}{x-y}, &\mbox{if $x\ne y$,}\\
f'(x), &\mbox{if $x=y$}
\end{cases}
\]
belongs to $C^{\infty}_b(\R^2)$ for any function $f$ from
$C_b^{\infty}(\R)$. But this follows from the formula
\[
\Delta f(x,y)=\int_0^1f'(sx+(1-s)y)\6s, \qquad((x,y)\in\R^2),
\]
which together with the usual theorem on differentiation under the
integral sign shows that $\Delta f$ is a $C^{\infty}$-function on
$\R^2$ with derivatives given by
\[
\frac{\partial^{k+l}}{\partial x^k\partial y^l}\Delta f(x,y)
=\int_0^1f^{(k+l+1)}(sx+(1-s)y)s^k(1-s)^l\6s, \qquad((x,y)\in\R^2)
\]
for any $k,l$ in $\N_0$.
\end{proof}

We close this section by giving a short proof of the previously
mentioned fact that the measures $\rho_n(x,y)\6x\6y$ converge weakly
to the measure $\rho(x,y)\6x\6y$ given by \eqref{e4.3a}. As indicated
at the end of the introduction, this fact is well-known in the physics
literature (see \cite{kkp} and references therein).

\begin{proposition}\label{svag konvergens af rho-n}
For each $n$ in $\N$, let $\mu_n$ denote the measure on $\R^2$ with
density $\rho_n$ with respect to Lebesgue measure on $\R^2$.
Then $\mu_n$ is a probability measure on $\R^2$, and
$\mu_n$ converges weakly, as $n\to\infty$, to the probability measure
$\mu$ on $\R^2$ with density
\[
\rho(x,y)=\frac{1}{4\pi^2}\frac{4-xy}{\sqrt{4-x^2}\sqrt{4-y^2}}
1_{(-2,2)}(x)1_{(-2,2)}(y),
\]
with respect to Lebesgue measure on $\R^2$.
\end{proposition}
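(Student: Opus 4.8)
The plan is to treat the two assertions of the proposition separately, and to exploit at every step the factorised formula \eqref{e4.11} for $\rho_n$. First I would check that each $\mu_n$ is a probability measure. Non‑negativity of $\rho_n$ is immediate from \eqref{e4.2}, since $\rho_n$ is a constant times a square. For the total mass I would use \eqref{e4.11}: because $h_n$ is a Schwartz function, $\int_\R h_n'\6t=\int_\R h_n''\6t=0$, while integration by parts gives $\int_\R t\,h_n'(t)\6t=-1$ and hence $\int_\R\tilde h_n\6t=\int_\R\big(h_n(t)-t\,h_n'(t)\big)\6t=2$. Substituting these into \eqref{e4.11} yields $\int_{\R^2}\rho_n\6x\6y=\tfrac14\big(2\cdot 2-0-0\big)=1$. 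For the candidate limit $\rho$ of \eqref{e4.3a} one has $\rho\ge0$ on $(-2,2)^2$ because $4-xy>0$ there, and $\int_{\R^2}\rho\6x\6y=1$: the $xy$‑term integrates to zero by oddness, and the remaining term equals $\tfrac{1}{4\pi^2}\cdot4\cdot\big(\int_{-2}^2(4-x^2)^{-1/2}\6x\big)^2=\tfrac{1}{\pi^2}\cdot\pi^2=1$. Thus $\mu$ is a probability measure too.

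For the weak convergence, the key simplification is that \eqref{e4.11} makes everything fully explicit on product test functions, so I would first establish convergence for $f=\phi\otimes\psi$ with $\phi,\psi\in C_c^{\infty}(\R)$, where $(\phi\otimes\psi)(x,y)=\phi(x)\psi(y)$. For such $f$, Fubini and \eqref{e4.11} give
\[
\int_{\R^2}\phi(x)\psi(y)\rho_n(x,y)\6x\6y
=\tfrac14\Big[\Big(\int_\R\phi\tilde h_n\6t\Big)\Big(\int_\R\psi\tilde h_n\6t\Big)
-4\Big(\int_\R\phi h_n'\6t\Big)\Big(\int_\R\psi h_n'\6t\Big)
-\tfrac1{n^2}\Big(\int_\R\phi h_n''\6t\Big)\Big(\int_\R\psi h_n''\6t\Big)\Big],
\]
so it suffices to identify the limits of the individual one‑dimensional integrals.

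These limits follow from Wigner's semicircle law, in the form of the weak convergence $h_n(t)\6t\to w(t)\6t$ with $w(t)=\tfrac{1}{2\pi}\sqrt{4-t^2}\,1_{(-2,2)}(t)$, combined with integration by parts. Indeed $\int_\R\phi h_n'\6t=-\int_\R\phi'h_n\6t\to-\int_\R\phi'w\6t=\int_\R\phi\,w'\6t$, the last equality being integration by parts with vanishing boundary term since $w(\pm2)=0$; similarly $\int_\R\phi\tilde h_n\6t=\int_\R\big(2\phi+t\phi'\big)h_n\6t\to\int_\R\big(2\phi+t\phi'\big)w\6t=\int_\R\phi\,\tilde w\6t$, where $\tilde w=w-tw'$; and $\tfrac1{n^2}\int_\R\phi h_n''\6t=\tfrac1{n^2}\int_\R\phi''h_n\6t\to0$ because $\int_\R\phi''h_n\6t$ stays bounded. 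Hence the displayed quantity converges to $\tfrac14\big[(\int\phi\tilde w)(\int\psi\tilde w)-4(\int\phi w')(\int\psi w')\big]$. Using the explicit forms $\tilde w(x)=\tfrac{2}{\pi\sqrt{4-x^2}}$ and $w'(x)=\tfrac{-x}{2\pi\sqrt{4-x^2}}$ on $(-2,2)$, a direct check gives $\tfrac14\big[\tilde w(x)\tilde w(y)-4w'(x)w'(y)\big]=\rho(x,y)$, so the limit is exactly $\int_{\R^2}(\phi\otimes\psi)\rho\6x\6y$, as required.

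Finally I would upgrade convergence on products to genuine weak convergence. The linear span of the products $\phi\otimes\psi$ ($\phi,\psi\in C_c^{\infty}(\R)$) is dense in $C_c(\R^2)$ in the supremum norm; since $\sup_n\mu_n(\R^2)=1<\infty$, an $\epsilon/3$‑argument promotes the convergence to all of $C_c(\R^2)$, i.e.\ to vague convergence. As the vague limit $\mu$ is itself a probability measure, no mass escapes to infinity, and vague convergence is therefore weak convergence. The only genuine obstacles are bookkeeping ones: verifying that every boundary term produced by the integrations by parts against the semicircle density vanishes (which rests precisely on $w(\pm2)=0$ and $x\,w(x)\big|_{x=\pm2}=0$) and that all integrals converge absolutely despite the integrable $(4-x^2)^{-1/2}$ singularities of $\tilde w$ and $w'$, together with the passage from products to all continuous test functions. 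I note that the mere existence of $\lim_n\int f\6\mu_n$ for every $f\in C_b^{\infty}(\R^2)$ is in any case immediate from Theorem~\ref{2-dim exp III} with $k=0$; the real content of the proposition is the identification of that limit with $\int f\rho$, which is what the computation above supplies.
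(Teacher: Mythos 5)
Your argument is correct and follows essentially the same route as the paper: both rest on the factorised formula \eqref{e4.11}, reduce to product test functions via Fubini, and pass to the limit using Wigner's semicircle law together with integration by parts against $h_n$ and then (in reverse) against the semicircle density, ending with the same identity $\tfrac14\big[\tilde h_\infty(x)\tilde h_\infty(y)-4h_\infty'(x)h_\infty'(y)\big]=\rho(x,y)$. The only difference is in the final abstract step: the paper tests against the exponentials $\e^{\ri zx+\ri wy}$ and concludes via convergence of characteristic functions, whereas you test against products of $C_c^{\infty}$-functions and upgrade via Stone--Weierstrass together with the fact that the vague limit is a probability measure.
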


\begin{proof} We prove that
\begin{equation}
\lim_{n\to\infty}\int_{\R^2}\e^{\ri zx+\ri wy}\rho_n(x,y)\6x\6y=
\int_{\R^2}\e^{\ri zx+\ri wy}\rho(x,y)\6x\6y
\label{a4.8}
\end{equation}
for all $z,w$ in $\R$. Given such $z$ and $w$, we apply formulas
\eqref{a4.1}-\eqref{a4.4} to the case where $f(x,y)=\e^{\ri zx+\ri wy}$,
and it follows that
\begin{equation}
\begin{split}
&\int_{\R^2}\e^{\ri zx+\ri wy}\rho_n(x,y)\6x\6y\\[.2cm]
&=\frac{1}{4}\int_{\R^2}\e^{\ri zx+\ri wy}
\big[\tilde{h}_n(x)\tilde{h}_n(y)-4h_n'(x)h_n'(y)
-n^{-2}h_n''(x)h_n''(y)\big]\6x\6y\\[.2cm]
&=\frac{1}{4}\int_{\R^2}\big[4 + 2\ri zx + 2\ri wy - zwxy + 4zw
-n^{-2}z^2w^2\big]\e^{\ri zx+\ri wy}h_n(x)h_n(y)\6x\6y\\[.2cm]
&=\frac{1}{4}\int_{\R^2}\big[(4+4zw-n^{-2}z^2w^2) + 2\ri zx + 2\ri wy -
zwxy\big]\e^{\ri zx+\ri wy}h_n(x)h_n(y)\6x\6y.
\end{split}
\label{a4.5}
\end{equation}
In the case $z=w=0$, it follows in particular that $\mu_n$ is indeed a
probability measure, and hence, once \eqref{a4.8} has been
established, so is $\mu$.
By linearity the resulting expression in \eqref{a4.5} may be written
as a linear combination of 4 integrals of tensor products (a function
of $x$ times a function of $y$). Therefore, by Fubini's Theorem and
Wigner's semi-circle law, it follows
that 
\begin{equation*}\begin{split}
\lim_{n\to\infty}\int_{\R^2}\e^{\ri zx+\ri wy}&\rho_n(x,y)\6x\6y\\[.2cm] 
&=\frac{1}{4}\int_{\R^2}\big[4+4zw + 2\ri zx + 2\ri wy - zwxy\big]
\e^{\ri zx+\ri wy}h_{\infty}(x)h_{\infty}(y)\6x\6y,
\end{split}\end{equation*}
where $h_{\infty}(x)=\frac{1}{2\pi}\sqrt{4-x^2}1_{[-2,2]}(x)$. For $x$
in $(-2,2)$ it is easily seen that
\begin{equation}
h_{\infty}'(x)=\frac{-x}{2\pi\sqrt{4-x^2}}, \qand
\tilde{h}_{\infty}(x):=h_{\infty}(x)-xh'_{\infty}(x)
=\frac{2}{\pi\sqrt{4-x^2}},
\label{a4.5a}
\end{equation}
so in particular $h_{\infty}'$ and $\tilde{h}_{\infty}$ are both
$\CL^1$-functions (with respect to Lebesgue measure).
This enables us to perform the calculations in \eqref{a4.5} in the
reversed order and with $h_n$ replaced by $h_{\infty}$. We may
thus deduce that
\begin{equation}
\lim_{n\to\infty}\int_{\R^2}\e^{\ri zx+\ri wy}\rho_n(x,y)\6x\6y
=\int_{(-2,2)\times(-2,2)}\e^{\ri zx+\ri wy}
\big[\tfrac{1}{4}\tilde{h}_{\infty}(x)\tilde{h}_{\infty}(y)
-h_{\infty}'(x)h_{\infty}'(y)\big]\6x\6y.
\label{a4.6}
\end{equation}
Finally it follows from \eqref{a4.5a} and a 
straightforward calculation that
\begin{equation}
\frac{1}{4}\tilde{h}_{\infty}(x)\tilde{h}_{\infty}(y)
-h_{\infty}'(x)h_{\infty}'(y) 
=\frac{4-xy}{4\pi^2\sqrt{4-x^2}\sqrt{4-y^2}}
\label{a4.7}
\end{equation}
for all $x,y$ in $(-2,2)$. Combining \eqref{a4.6} with \eqref{a4.7},
we have established \eqref{a4.8}.
\end{proof}

\section{Asymptotic expansion for the two-dimensional Cauchy
  transform}\label{sec to-dim G} 

In this section we study in greater detail the asymptotic expansion
from Corollary~\ref{2-dim exp IV} in the case where
$f(x)=\frac{1}{\lambda-x}$ and $g(x)=\frac{1}{\mu-x}$ for
$\lambda,\mu$ in $\C\setminus\R$. In this setup we put
\[
G_n(\lambda,\mu)=\Cov\big\{\Tr_n[(\lambda\unit-X_n)^{-1}],
\Tr_n[(\mu\unit-X_n)^{-1}]\big\},
\]
where as before $X_n$ is a $\GUE(n,\frac{1}{n})$ random matrix.

Recall from Proposition~\ref{svag konvergens af rho-n} that
$\lim_{n\to\infty}G_n(\lambda,\mu)=G(\lambda,\mu)$ for any
$\lambda,\mu$ in $\C\setminus\R$, where 
\begin{equation}
G(\lambda,\mu)=\int_{\R^2}\Big(\frac{(\lambda-x)^{-1}
-(\lambda-y)^{-1}}{x-y}\Big)\Big(\frac{(\mu-x)^{-1}
-(\mu-y)^{-1}}{x-y}\Big)\rho(x,y)\6x\6y.
\label{e4.14}
\end{equation}

\begin{lemma}\label{ikke-lineaer diff ligning}
Let $G_n$ be the Cauchy transform of a $\GUE(n,\frac{1}{n})$ random
matrix. Then for any $\lambda$ in $\C\setminus\R$ we have that
\[
\tilde{G}_n(\lambda)^2-4G_n'(\lambda)^2+4G_n'(\lambda)-
\tfrac{1}{n^2}G_n''(\lambda)^2=0,
\]
where $\tilde{G}_n(\lambda)=G_n(\lambda)-\lambda G_n'(\lambda)$.
\end{lemma}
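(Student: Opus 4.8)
The plan is to recognize the asserted identity as a first integral of the linear third-order differential equation for $G_n$ furnished by Lemma~\ref{diff ligning for G-n}, and then to pin down the value of that integral by letting $|\lambda|\to\infty$. Throughout I fix $n$ and work on $\C\setminus\R$, on which $G_n$ is analytic with derivatives given by \eqref{e3.15}. I introduce the auxiliary analytic function
\[
\Phi(\lambda)=\tilde{G}_n(\lambda)^2-4G_n'(\lambda)^2+4G_n'(\lambda)-\tfrac{1}{n^2}G_n''(\lambda)^2,
\qquad \tilde{G}_n(\lambda)=G_n(\lambda)-\lambda G_n'(\lambda),
\]
and the goal is to show that $\Phi\equiv0$.

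The first step is to differentiate $\Phi$. Since $\tilde{G}_n'=G_n'-G_n'-\lambda G_n''=-\lambda G_n''$, I obtain
\[
\Phi'=-2\lambda\tilde{G}_nG_n''-8G_n'G_n''+4G_n''-\tfrac{2}{n^2}G_n''G_n'''
=2G_n''\big[-\lambda\tilde{G}_n-4G_n'+2-\tfrac{1}{n^2}G_n'''\big].
\]
Expanding $-\lambda\tilde{G}_n=-\lambda G_n+\lambda^2G_n'$, the bracket becomes $-\lambda G_n+(\lambda^2-4)G_n'+2-\tfrac{1}{n^2}G_n'''$, which vanishes identically: rewriting the equation of Lemma~\ref{diff ligning for G-n} as $2-\tfrac{1}{n^2}G_n'''=(4-\lambda^2)G_n'+\lambda G_n$ and substituting shows the bracket equals $-\lambda G_n+(\lambda^2-4)G_n'+(4-\lambda^2)G_n'+\lambda G_n=0$. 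Hence $\Phi'\equiv0$ on $\C\setminus\R$, so $\Phi$ is constant on each of the two connected components (the upper and lower half-planes).

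It then remains to identify the constant, and here I would let $\lambda=\ri y$ with $y\to+\infty$. Since $h_n(t)\6t$ is a probability measure and $|\ri y-t|\ge y$, the representations in \eqref{e3.15} give $|G_n(\ri y)|\le y^{-1}$, $|G_n'(\ri y)|\le y^{-2}$ and $|G_n''(\ri y)|\le 2y^{-3}$, whence also $|\tilde{G}_n(\ri y)|\le|G_n(\ri y)|+y|G_n'(\ri y)|\le 2y^{-1}$. Thus every term of $\Phi(\ri y)$ tends to $0$, forcing the constant on the upper half-plane to be $0$; the same estimate with $y\to-\infty$ disposes of the lower half-plane. Therefore $\Phi\equiv0$ on $\C\setminus\R$, as claimed. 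The only genuine bookkeeping is checking that the bracket in $\Phi'$ matches the left-hand side of Lemma~\ref{diff ligning for G-n} up to sign; once that cancellation is spotted, the remainder is immediate. (One could instead substitute the factorized formula \eqref{e4.11} for $\rho_n$ into the tensorized integral $\int_{\R^2}\rho_n(x,y)(\lambda-x)^{-1}(\lambda-y)^{-1}\6x\6y$ and integrate by parts, but the first-integral route is considerably shorter.)
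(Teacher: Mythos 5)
Your proof is correct and follows essentially the same route as the paper: the authors also define the left-hand side as a function $K_n$, show $K_n'=2G_n''\cdot[\,\text{ODE of Lemma~\ref{diff ligning for G-n}}\,]=0$, conclude constancy on each half-plane, and identify the constant as $0$ by letting $\lambda=\ri y$, $|y|\to\infty$. Your explicit bounds $|G_n^{(k)}(\ri y)|\le k!\,|y|^{-k-1}$ are a perfectly valid substitute for the paper's dominated-convergence argument at that last step.
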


\begin{proof} For $\lambda$ in $\C\setminus\R$ we put
\[
K_n(\lambda)=\tilde{G}_n(\lambda)^2-4G_n'(\lambda)^2+4G_n'(\lambda)-
\tfrac{1}{n^2}G_n''(\lambda)^2.
\]
Observing that $\tilde{G}_n'(\lambda)=-\lambda G_n''(\lambda)$, it
follows that for any $\lambda$ in $\C\setminus\R$
\begin{equation*}\begin{split}
K_n'(\lambda)&=2\tilde{G}_n(\lambda)\tilde{G}_n'(\lambda)
-8G_n'(\lambda)G_n''(\lambda)+4G_n''(\lambda)-
\tfrac{2}{n^2}G_n''(\lambda)G_n'''(\lambda)\\[.2cm]
&=2G_n''(\lambda)\big[-\lambda G_n(\lambda)+\lambda^2G_n'(\lambda)
-4G_n'(\lambda)+2-\tfrac{1}{n^2}G_n'''(\lambda)\big]\\[.2cm]
&=2G_n''(\lambda)\big[-\tfrac{1}{n^2}G_n'''(\lambda)
+(\lambda^2-4)G_n'(\lambda)-\lambda G_n(\lambda)+2\big]\\[.2cm]
&=0,
\end{split}\end{equation*}
where the last equality follows from Lemma~\ref{diff ligning for
  G-n}. We may thus conclude that $K_n$ is constant on each of the two
  connected components of $\C\setminus\R$. However, for $y$ in $\R$ we
  have by dominated convergence that
\[
|\ri yG_n'(\ri y)|=\Big|y\int_{\R}\frac{1}{(\ri y-x)^2}h_n(x)\6x\Big|
\le\int_{\R}\frac{|y|}{y^2+x^2}h_n(x)\6x\longrightarrow0,
\]
as $|y|\to\infty$, and similarly $G_n(\ri y)\to0$
and $G_n''(\ri y)\to0$ as $|y|\to\infty$. It thus follows that
$K_n(\ri y)\to0$, as $|y|\to\infty$, $y\in\R$, and hence we must have
$K_n\equiv0$, as desired.
\end{proof}

\begin{theorem}\label{2-dim G vha 1-dim}
Let $X_n$ be a $\GUE(n,\frac{1}{n})$ random matrix, and consider for
$\lambda,\mu$ in $\C\setminus\R$ the two-dimensional Cauchy transform:
\[
G_n(\lambda,\mu)=\Cov\big\{\Tr_n[(\lambda\unit-X_n)^{-1}],
\Tr_n[(\mu\unit-X_n)^{-1}]\big\}.
\]

\begin{itemize}

\item[(i)] If $\lambda\ne\mu$, we have that
\[
G_n(\lambda,\mu)=\frac{-1}{2(\lambda-\mu)^2}
\big(\tilde{G}_n(\lambda)\tilde{G}_n(\mu)-(2G_n'(\lambda)-1)(2G_n'(\mu)-1)+1
-\tfrac{1}{n^2}G_n''(\lambda)G_n''(\mu)\big),
\]
where $G_n(\lambda)$ is the Cauchy transform of $X_n$ at $\lambda$, and
where $\tilde{G}_n(\lambda)=G_n(\lambda)-\lambda G_n'(\lambda)$.

\item[(ii)] If $\lambda=\mu\in\C\setminus\R$ we have that
\[
\V\big\{\Tr_n[(\lambda\unit-X_n)^{-1}]\big\}
=G_n(\lambda,\lambda)
=\frac{1}{4}(\lambda^2-4)G_n''(\lambda)^2-\frac{1}{4n^{2}}G_n'''(\lambda)^2
\]
with $G_n(\lambda)$ as in (i).
\end{itemize}
\end{theorem}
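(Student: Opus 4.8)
The plan is to feed the two Cauchy-transform functions $f(x)=\frac{1}{\lambda-x}$ and $g(x)=\frac{1}{\mu-x}$ into the integral formula of Corollary~\ref{Pastur for covarians}(ii) and to exploit the tensor-product structure of the kernel $\rho_n$ established in Proposition~\ref{formel for rho-n}. The first observation is the telescoping identity
\[
\frac{f(x)-f(y)}{x-y}=\frac{1}{(\lambda-x)(\lambda-y)},\qquad
\frac{g(x)-g(y)}{x-y}=\frac{1}{(\mu-x)(\mu-y)},
\]
so that the integrand in Corollary~\ref{Pastur for covarians}(ii) equals
$\frac{1}{(\lambda-x)(\mu-x)}\cdot\frac{1}{(\lambda-y)(\mu-y)}\,\rho_n(x,y)$.
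Substituting the formula for $\rho_n$ from Proposition~\ref{formel for rho-n}, each of its three summands is a product of a function of $x$ and a function of $y$; since $h_n,h_n',h_n''$ decay like Gaussians and $\lambda,\mu\notin\R$, Fubini's theorem applies and yields
\[
G_n(\lambda,\mu)=\frac14\Big[\Big(\int_{\R}\tilde h_n(t)P(t)\6t\Big)^2
-4\Big(\int_{\R}h_n'(t)P(t)\6t\Big)^2
-\frac{1}{n^2}\Big(\int_{\R}h_n''(t)P(t)\6t\Big)^2\Big],
\]
where $P(t)=\frac{1}{(\lambda-t)(\mu-t)}$.

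Next I would evaluate the three single integrals. Using the partial-fraction identity $P(t)=\frac{1}{\mu-\lambda}\big(\frac{1}{\lambda-t}-\frac{1}{\mu-t}\big)$ together with the elementary transforms $\int_{\R}\frac{h_n^{(k)}(t)}{\lambda-t}\6t=G_n^{(k)}(\lambda)$ for $k=0,1,2$ (proved by integrating by parts and using $\partial_t\frac{1}{\lambda-t}=-\partial_\lambda\frac{1}{\lambda-t}$) and $\int_{\R}\frac{\tilde h_n(t)}{\lambda-t}\6t=G_n(\lambda)-\lambda G_n'(\lambda)=\tilde G_n(\lambda)$, each integral collapses to a divided difference, e.g. $\int_{\R}\tilde h_n(t)P(t)\6t=\frac{\tilde G_n(\lambda)-\tilde G_n(\mu)}{\mu-\lambda}$. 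This produces the symmetric ``difference of squares'' form
\[
G_n(\lambda,\mu)=\frac{1}{4(\lambda-\mu)^2}\Big[(\tilde G_n(\lambda)-\tilde G_n(\mu))^2
-4(G_n'(\lambda)-G_n'(\mu))^2
-\frac{1}{n^2}(G_n''(\lambda)-G_n''(\mu))^2\Big].
\]

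The crux of part (i) is then a purely algebraic rearrangement: expanding the three squares splits the bracket into a pure-$\lambda$ block, a pure-$\mu$ block, and cross terms. The pure-$\lambda$ block is exactly $\tilde G_n(\lambda)^2-4G_n'(\lambda)^2-\frac{1}{n^2}G_n''(\lambda)^2$, which by the nonlinear relation of Lemma~\ref{ikke-lineaer diff ligning} equals $-4G_n'(\lambda)$, and likewise for $\mu$. Substituting these and collecting the surviving cross terms $-2\tilde G_n(\lambda)\tilde G_n(\mu)+8G_n'(\lambda)G_n'(\mu)+\frac{2}{n^2}G_n''(\lambda)G_n''(\mu)$, the bracket becomes $-2\big[\tilde G_n(\lambda)\tilde G_n(\mu)-4G_n'(\lambda)G_n'(\mu)+2G_n'(\lambda)+2G_n'(\mu)-\frac{1}{n^2}G_n''(\lambda)G_n''(\mu)\big]$, and recognising $-(2G_n'(\lambda)-1)(2G_n'(\mu)-1)+1=-4G_n'(\lambda)G_n'(\mu)+2G_n'(\lambda)+2G_n'(\mu)$ gives precisely the claimed expression. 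I expect this step---seeing that Lemma~\ref{ikke-lineaer diff ligning} is exactly the identity that turns the symmetric squared form into the asymmetric product form---to be the main (and only genuinely nontrivial) point; everything else is bookkeeping.

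For part (ii) I would run the same computation with $\mu=\lambda$, so that $P$ is replaced by $Q(t)=\frac{1}{(\lambda-t)^2}=-\partial_\lambda\frac{1}{\lambda-t}$. Differentiating the transforms above under the integral sign gives $\int_{\R}h_n^{(k)}(t)Q(t)\6t=-G_n^{(k+1)}(\lambda)$ and, using $\tilde G_n'(\lambda)=-\lambda G_n''(\lambda)$, also $\int_{\R}\tilde h_n(t)Q(t)\6t=\lambda G_n''(\lambda)$. Plugging these into the factored formula for $G_n(\lambda,\lambda)$ yields $\frac14\big[\lambda^2 G_n''(\lambda)^2-4G_n''(\lambda)^2-\frac{1}{n^2}G_n'''(\lambda)^2\big]$, which is the stated $\frac14(\lambda^2-4)G_n''(\lambda)^2-\frac{1}{4n^2}G_n'''(\lambda)^2$; no appeal to Lemma~\ref{ikke-lineaer diff ligning} is needed here. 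Alternatively one could obtain (ii) from (i) by a Taylor expansion as $\mu\to\lambda$, but the direct route is cleaner and serves as a consistency check.
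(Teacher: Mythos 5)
Your proposal is correct and follows essentially the same route as the paper: the integral formula of Corollary~\ref{Pastur for covarians}(ii), the tensor-product decomposition of $\rho_n$ from Proposition~\ref{formel for rho-n} combined with Fubini and partial fractions to reach the symmetric difference-of-squares form, and then Lemma~\ref{ikke-lineaer diff ligning} to convert the pure-$\lambda$ and pure-$\mu$ blocks into $-4G_n'(\lambda)$ and $-4G_n'(\mu)$; part (ii) is likewise handled exactly as in the paper via the integrals of $\tilde h_n,h_n',h_n''$ against $(\lambda-t)^{-2}$. The only (immaterial) difference is the order in which you apply Fubini and the partial-fraction step.
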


\begin{proof} (i) \ Assume that $\lambda,\mu\in\C\setminus\R$, and that
$\lambda\ne\mu$. Using Corollary~\ref{Pastur for covarians}(ii) we
find that 
\begin{equation*}\begin{split}
G_n(\lambda,\mu)&=\int_{\R^2}\Big(\frac{(\lambda-x)^{-1}
-(\lambda-y)^{-1}}{x-y}\Big)\Big(\frac{(\mu-x)^{-1}
-(\mu-y)^{-1}}{x-y}\Big)\rho_n(x,y)\6x\6y\\[.2cm]
&=\int_{\R^2}\frac{1}{(\lambda-x)(\mu-x)(\lambda-y)(\mu-y)}
\rho_n(x,y)\6x\6x\\[.2cm]
&=\frac{1}{(\mu-\lambda)^2}
\int_{\R^2}\Big(\frac{1}{\lambda-x}-\frac{1}{\mu-x}\Big)
\Big(\frac{1}{\lambda-y}-\frac{1}{\mu-y}\Big)\rho_n(x,y)
\6x\6y.
\end{split}\end{equation*}
Using now Proposition~\ref{formel for rho-n} and Fubini's Theorem, it
follows that
\begin{equation}
G_n(\lambda,\mu)=\frac{1}{4(\mu-\lambda)^2}
\big((H_n(\lambda)-H_n(\mu))^2-4(I_n(\lambda)-I_n(\mu))^2-
\frac{1}{n^2}(J_n(\lambda)-J_n(\mu))^2\big),
\label{e4.12}
\end{equation}
where e.g.\
\[
H_n(\lambda)=\int_{\R}\frac{1}{\lambda-x}\tilde{h}_n(x)\6x,\
I_n(\lambda)=\int_{\R}\frac{1}{\lambda-x}h_n'(x)\6x, \mbox{ and }
J_n(\lambda)=\int_{\R}\frac{1}{\lambda-x}h_n''(x)\6x.
\]
Note here that by partial integration and \eqref{e3.15}
\begin{equation*}\begin{split}
\int_{\R}\frac{1}{\lambda-x}\tilde{h}_n(x)\6x
&=\int_{\R}\frac{1}{\lambda-x}(h_n(x)-xh_n'(x))\6x
=G_n(\lambda)-\int_{\R}\Big(\frac{\lambda}{\lambda-x}-1\Big)
h_n'(x)\6x\\[.2cm]
&=G_n(\lambda)+\lambda\int_{\R}\frac{1}{(\lambda-x)^2}h_n(x)\6x
=G_n(\lambda)-\lambda G_n'(\lambda)\\[.2cm]
&=\tilde{G}_n(\lambda).
\end{split}\end{equation*}
We find similarly that
\[
\int_{\R}\frac{1}{\lambda-x}h_n'(x)\6x = G_n'(\lambda),
\qand
\int_{\R}\frac{1}{\lambda-x}h_n''(x)\6x = G_n''(\lambda).
\]
Inserting these expressions into \eqref{e4.12}, it follows that
\begin{equation*}\begin{split}
4(\lambda-\mu)^2G(\lambda,\mu)&=
\big(\tilde{G}_n(\lambda)-\tilde{G}_n(\mu)\big)^2 
-4\big(G_n'(\lambda)-G_n'(\mu)\big)^2
-\tfrac{1}{n^2}\big(G_n''(\lambda)-G_n''(\mu)\big)^2
\\[.2cm]
&=\Big[\tilde{G}_n(\lambda)^2-4G_n'(\lambda)^2
-\tfrac{1}{n^2}G_n''(\lambda)^2\Big]
+\Big[\tilde{G}_n(\mu)^2-4G_n'(\mu)^2
-\tfrac{1}{n^2}G_n''(\mu)^2\Big]\\
&\qquad\qquad -2\tilde{G}_n(\lambda)\tilde{G}_n(\mu)
+8G_n'(\lambda)G_n'(\mu)+\tfrac{2}{n^2}G_n''(\lambda)G_n''(\mu)
\\[.2cm]
&=-4G_n'(\lambda)-4G_n'(\mu)-2\tilde{G}_n(\lambda)\tilde{G}_n(\mu)
+8G_n'(\lambda)G_n'(\mu)+\tfrac{2}{n^2}G_n''(\lambda)G_n''(\mu),
\end{split}\end{equation*}
where the last equality uses Lemma~\ref{ikke-lineaer diff ligning}. We
may thus conclude that
\begin{equation*}\begin{split}
G_n(\lambda,\mu)&=
\frac{-1}{2(\lambda-\mu)^2}
\Big(\tilde{G}_n(\lambda)\tilde{G}_n(\mu)+2G_n'(\lambda)+2G_n'(\mu)
-4G_n'(\lambda)G_n'(\mu)-\tfrac{1}{n^2}G_n''(\lambda)G_n''(\mu)\Big)
\\[.2cm]
&=\frac{-1}{2(\lambda-\mu)^2}
\Big(\tilde{G}_n(\lambda)\tilde{G}_n(\mu)-(2G_n'(\lambda)-1)(2G_n'(\mu)-1)+1
-\tfrac{1}{n^2}G_n''(\lambda)G_n''(\mu)\Big),
\end{split}\end{equation*}
which completes the proof of (i).

(ii) \ Proceeding as in the proof of (i) we find by application of
Proposition~\ref{formel for rho-n} that
\begin{equation}
\begin{split}
4G_n(\lambda,\lambda)&=4\int_{\R}\frac{1}{(\lambda-x)^2(\lambda-y)^2}
\rho_n(x,y)\6x\6y\\[.2cm]
&=\Big(\int_{\R}\frac{\tilde{h}_n(x)}{(\lambda-x)^2}\6x\Big)^2
-4\Big(\int_{\R}\frac{h'_n(x)}{(\lambda-x)^2}\6x\Big)^2
-\frac{1}{n^2}\Big(\int_{\R}\frac{h''_n(x)}{(\lambda-x)^2}\6x\Big)^2.
\end{split}
\label{e4.13}
\end{equation}
By calculations similar to those in the proof of (i), we have here
that
\[
\int_{\R}\frac{\tilde{h}_n(x)}{(\lambda-x)^2}\6x=\lambda G_n''(\lambda),
\quad \int_{\R}\frac{h'_n(x)}{(\lambda-x)^2}\6x=-G_n''(\lambda), \quad
\int_{\R}\frac{h''_n(x)}{(\lambda-x)^2}\6x=-G_n'''(\lambda),
\]
which inserted into \eqref{e4.13} yields the formula in (ii).
\end{proof}

\begin{corollary}\label{asymp exp for 2-dim G}
Consider the coefficients $\eta_j$, $j\in\N_0$, in the asymptotic
expansion of $G_n(\lambda)$ (cf.\ Proposition~\ref{eksplicit udtryk
  for eta}), and adopt as before the notation
$\tilde{\eta}_j(\lambda)=\eta_j(\lambda)-\lambda\eta_j'(\lambda)$.

\begin{itemize}

\item[(i)]
For any distinct $\lambda,\mu$ from
$\C\setminus\{0\}$ and $k$ in $\N_0$ we have the asymptotic expansion:
\begin{equation}
G_n(\lambda,\mu)=
\frac{1}{2(\lambda-\mu)^2}\Big[\Gamma_0(\lambda,\mu)+
\frac{\Gamma_1(\lambda,\mu)}{n^2}+\frac{\Gamma_2(\lambda,\mu)}{n^4}+\cdots+
\frac{\Gamma_k(\lambda,\mu)}{n^{2k}}+O(n^{-2k-2})\Big],
\label{e4.23}
\end{equation}
where
\[
\Gamma_0(\lambda,\mu)=(2\eta_0'(\lambda)-1)(2\eta_0'(\mu)-1)
-\tilde{\eta}_0(\lambda)\tilde{\eta}_0(\mu)-1,
\]
and for $l$ in $\{1,2,\ldots,k\}$
\begin{equation}
\begin{split}
\Gamma_l(\lambda,\mu)&=
2\eta_l'(\lambda)(2\eta_0'(\mu)-1)+2\eta_l'(\mu)(2\eta_0'(\lambda)-1)
+4\sum_{j=1}^{l-1}\eta_j'(\lambda)\eta_{l-j}'(\mu)\\
&\phantom{\Gamma_l(\lambda,\mu)=}\quad
+\sum_{j=0}^{l-1}\eta_j''(\lambda)\eta_{l-1-j}''(\mu)
-\sum_{j=0}^l\tilde{\eta}_j(\lambda)\tilde{\eta}_{l-j}(\mu)
\end{split}
\label{e4.16}
\end{equation}
(the third term on the right hand side should be neglected, when
$l=1$). 

\item[(ii)] For any $\lambda$ in $\C\setminus\R$ and any $k$ in $\N_0$
  we have that 
\[
G_n(\lambda,\lambda)=\frac{1}{4}\Big[\Upsilon_0(\lambda)
+\frac{\Upsilon_1(\lambda)}{n^2}+\frac{\Upsilon_2(\lambda)}{n^4} 
+\cdots+\frac{\Upsilon_k(\lambda)}{n^{2k}}+O(n^{-2k-2})\Big],
\]
where
\[
\Upsilon_0(\lambda)=(\lambda^2-4)\eta_0''(\lambda)^2,
\]
and for $l$ in $\{1,2,\ldots,k\}$
\begin{equation}
\Upsilon_l(\lambda)=
(\lambda^2-4)\sum_{j=0}^l\eta_j''(\lambda)\eta_{l-j}''(\lambda)
-\sum_{j=0}^{l-1}\eta_j'''(\lambda)\eta_{l-1-j}'''(\lambda).
\label{e4.17a}
\end{equation}
\end{itemize}
\end{corollary}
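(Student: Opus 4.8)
The plan is to obtain both expansions by feeding the one-dimensional expansion of $G_n$ from Proposition~\ref{eksplicit udtryk for eta} into the closed-form identities of Theorem~\ref{2-dim G vha 1-dim} and then collecting powers of $n^{-2}$. The whole argument is a bookkeeping exercise once one crucial preliminary is in place, namely that the expansion of $G_n$ may be differentiated term by term in $\lambda$ without spoiling the order of the remainder.

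First I would record that for every $r$ in $\N_0$,
\[
G_n^{(r)}(\lambda)=\sum_{j=0}^k\eta_j^{(r)}(\lambda)n^{-2j}+O(n^{-2k-2}).
\]
Indeed, by Lemma~\ref{diff lign for eta-j} the remainder in the expansion of $G_n$ has the form $n^{-2k-2}R_{k,n}(\lambda)$ with $R_{k,n}$ analytic on $\C\setminus\R$, and the estimate $\sup_{n}\big|\tfrac{\rd^r}{\rd\lambda^r}R_{k,n}(\lambda)\big|<\infty$ established in that proof lets one differentiate $r$ times while keeping the remainder of order $n^{-2k-2}$. In particular, writing $\tilde{G}_n=G_n-\lambda G_n'$ and $\tilde{\eta}_j=\eta_j-\lambda\eta_j'$, this gives $\tilde{G}_n(\lambda)=\sum_{j=0}^k\tilde{\eta}_j(\lambda)n^{-2j}+O(n^{-2k-2})$. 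I would then note the elementary fact that a product of two such expansions, whose coefficients and remainders are bounded in $n$ for fixed argument, again admits an expansion in $n^{-2}$ whose coefficient of $n^{-2l}$ is the Cauchy product $\sum_{j=0}^l(\cdot)_j(\cdot)_{l-j}$, all cross terms involving a remainder factor being absorbed into $O(n^{-2k-2})$.

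For part~(i) I would insert these expansions into the formula of Theorem~\ref{2-dim G vha 1-dim}(i). The quantity in parentheses there is a fixed combination of the three products $\tilde{G}_n(\lambda)\tilde{G}_n(\mu)$, $(2G_n'(\lambda)-1)(2G_n'(\mu)-1)$ and $n^{-2}G_n''(\lambda)G_n''(\mu)$, together with the constant $1$. Applying the Cauchy-product rule to each—taking care that the prefactor $n^{-2}$ on $G_n''(\lambda)G_n''(\mu)$ shifts its $l$-th coefficient to $\sum_{j=0}^{l-1}\eta_j''(\lambda)\eta_{l-1-j}''(\mu)$, and that the $n$-independent term $2\eta_0'-1$ in $2G_n'-1$ must be separated from the higher terms $2\eta_j'$—and then multiplying by the prefactor $\tfrac{-1}{2(\lambda-\mu)^2}$, I expect the coefficient of $n^{-2l}$ to collapse exactly to $\Gamma_l(\lambda,\mu)$ as in \eqref{e4.16}; the two sign reversals (from the prefactor and from the minus sign in front of $(2G_n'-1)(2G_n'-1)$) are what turn the three subtracted products into the additive terms of $\Gamma_l$, while the $\tilde{G}_n\tilde{G}_n$ contribution keeps its sign and supplies the final subtracted sum. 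The case $l=0$ yields $\Gamma_0$. Part~(ii) is entirely analogous but shorter: substituting the expansions of $G_n''$ and $G_n'''$ into $G_n(\lambda,\lambda)=\tfrac14(\lambda^2-4)G_n''(\lambda)^2-\tfrac{1}{4n^2}G_n'''(\lambda)^2$ and reading off the coefficient of $n^{-2l}$ produces \eqref{e4.17a} at once.

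The only genuinely delicate point is the preliminary step: justifying term-by-term differentiation of the one-dimensional expansion in $\lambda$ with a remainder still of order $n^{-2k-2}$, together with the bookkeeping ensuring that every mixed remainder term in the products remains $O(n^{-2k-2})$. Both are governed by the single uniform bound $\sup_n\big|\tfrac{\rd^r}{\rd\lambda^r}R_{k,n}(\lambda)\big|<\infty$ from Lemma~\ref{diff lign for eta-j}, so once that is invoked the rest of the proof is a routine—if somewhat lengthy—collection of coefficients.
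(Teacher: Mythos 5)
Your proposal is correct and follows essentially the same route as the paper: differentiate the one-dimensional expansion term by term (justified by the uniform bounds on $\sup_n\bigl|\tfrac{\rd^r}{\rd\lambda^r}R_{k,n}(\lambda)\bigr|$ coming from Lemma~\ref{analytisk}), substitute into the identities of Theorem~\ref{2-dim G vha 1-dim}, and collect the $n^{-2l}$-coefficients by Cauchy products. Your sign bookkeeping for $\Gamma_l$ and $\Upsilon_l$ checks out, and you correctly isolate the one genuinely non-routine point, which is exactly the point the paper also flags.
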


\begin{proof} From the asymptotic expansion of $G_n(\lambda)$ 
(cf.\ Proposition~\ref{eksplicit udtryk for eta}) it follows that
\begin{equation*}\begin{split}
2G_n'(\lambda)-1&=(2\eta_0'(\lambda)-1)+\frac{2\eta_1'(\lambda)}{n^2}+\cdots+
\frac{2\eta_k'(\lambda)}{n^{2k}}+O(n^{-2k-2})\\[.2cm]
G_n''(\lambda)&=\eta_0''(\lambda)+\frac{\eta_1''(\lambda)}{n^2}+\cdots+
\frac{\eta_k''(\lambda)}{n^{2k}}+O(n^{-2k-2})\\[.2cm]
\tilde{G}_n(\lambda)&=\tilde{\eta}_0(\lambda)
+\frac{\tilde{\eta}_1(\lambda)}{n^2}+\cdots+
\frac{\tilde{\eta}_k(\lambda)}{n^{2k}}+O(n^{-2k-2}),
\end{split}\end{equation*}
where we also use that the derivatives of the remainder terms are
controlled via Lemma~\ref{analytisk}.

Inserting the above expressions (and the corresponding expressions for
$2G_n'(\mu)-1$, $G_n''(\mu)$ and $\tilde{G}_n(\mu)$)
into the formula in Theorem~\ref{2-dim G vha 1-dim}(i), it is
straightforward to establish (i) by collecting the
$n^{-2l}$-terms for each $l$ in $\{0,1,\ldots,k\}$. The proof of (ii)
follows similarly from Theorem~\ref{2-dim G vha 1-dim}(ii).
\end{proof}

\begin{remark}
Using that $\eta_0(\lambda)=\frac{\lambda}{2}-\frac{1}{2}(\lambda^2-4)^{1/2}$
(cf.\ Proposition~\ref{eksplicit udtryk for eta}) it follows from
Corollary~\ref{asymp exp for 2-dim G}(i) and a straightforward
calculation that for distinct $\lambda$ and $\mu$ from $\C\setminus\R$,
\begin{equation}
G(\lambda,\mu)=\lim_{n\to\infty}G_n(\lambda,\mu)=
\frac{\Gamma_0(\lambda,\mu)}{2(\lambda-\mu)^2}
=\frac{1}{2(\lambda-\mu)^2}\Big(\frac{\lambda\mu-4}
{(\lambda^2-4)^{1/2}(\mu^2-4)^{1/2}}-1\Big), 
\label{e4.15}
\end{equation}
where $G(\lambda,\mu)$ was initially given by \eqref{e4.14}. If
$\lambda=\mu$, it follows similarly from Corollary~\ref{asymp exp for
  2-dim G}(ii) that
\[
G(\lambda,\lambda)=\lim_{n\to\infty}G_n(\lambda,\lambda)
=\frac{1}{4}(\lambda^2-4)\eta_0''(\lambda)^2
=\frac{1}{(\lambda^2-4)^2},
\]
which may also be obtained by letting $\lambda$ tend to $\mu$
in \eqref{e4.15}!

Using also that $\eta_1(\lambda)=(\lambda^2-4)^{-5/2}$, it follows
from \eqref{e4.16} and a rather tedious calculation that
\begin{equation*}\begin{split}
\Gamma_1(\lambda,\mu)=
\frac{(\lambda-\mu)^2}{(\lambda^2-4)^{7/2}(\mu^2-4)^{7/2}}
\big(&5\lambda\mu^5+4\lambda^2\mu^4+4\mu^4-52\lambda\mu^3+
3\lambda^3\mu^3-16\mu^2+4\lambda^4\mu^2\\[.2cm] 
&-52\lambda^2\mu^2+208\lambda\mu+5\lambda^5\mu-52\lambda^3\mu-16\lambda^2+
320+4\lambda^4\big).
\end{split}\end{equation*}
Inserting this into \eqref{e4.23} and letting $\lambda$ tend to $\mu$
we obtain that
\[
\Upsilon_1(\lambda)=4(21\lambda^2+20)(\lambda^2-4)^{-5},
\]
which is in accordance with \eqref{e4.17a}.
\end{remark}

{\small
}

\vspace{1.5cm}

\begin{minipage}[c]{0.5\textwidth}
Department of Mathematical Sciences\\
University of Copenhagen\\
Universitetsparken~5\\
2100 Copenhagen {\O}\\
Denmark\\
{\tt haagerup@math.ku.dk}
\end{minipage}
\hfill
\begin{minipage}[c]{0.5\textwidth}
Department of Mathematical Sciences\\
University of Aarhus\\
Ny Munkegade 118\\
8000 Aarhus C\\
Denmark\\
{\tt steenth@imf.au.dk}
\end{minipage}

\end{document}